\newcounter{ipotesi}
 \makeatletter \@addtoreset{equation}{section}
\newtheorem{thm}{Theorem}[section]
\newtheorem{hyp}[thm]{Hypotheses}{\rm}
\newtheorem{hyp0}[thm]{Hypothesis}{\rm}
\newtheorem{lemm}[thm]{Lemma}
\newtheorem{coro}[thm]{Corollary}
\newtheorem{prop}[thm]{Proposition}
\newtheorem{defi}[thm]{Definition}
\newtheorem{rmk}[thm]{Remark}{\rm}
\newtheorem{example}[thm]{Example}
\newcounter{parentenv}
\newcommand{\R}{{\mathbb R}}
\newcommand{\N}{{\mathbb N}}
\newcommand{\Rd}{\mathbb R^d}
\newcommand{\Rm}{\mathbb R^m}
\newcommand{\T}{{\bf T}}
\newcommand{\g}{{\bf g}}
\newcommand{\f}{{\bf f}}
\newcommand{\uu}{{\bf u}}
\newcommand{\A}{\bm{\mathcal A}}
\newcommand{\vv}{{\bf v}}
\newcommand{\ww}{{\bf w}}
\newcommand{\one}{\mbox{$1\!\!\!\;\mathrm{l}$}}
\newcommand{\h}{{\bf h}}
\begin{document}

\title[On invariant measures associated to weakly coupled systems]{On invariant measures associated to weakly coupled systems of Kolmogorov equations}
\thanks{The authors are members of G.N.A.M.P.A. of the Italian Istituto Nazionale di Alta Matematica (INdAM). Work partially supported by the INdAM-GNAMPA Project 2017 ``Equazioni e sistemi di equazioni di Kolmogorov in dimensione finita e non''.}
\author[D. Addona, L. Angiuli, L. Lorenzi]{Davide Addona, Luciana Angiuli, Luca Lorenzi}
\address{Dipartimento di Scienze Matematiche, Fisiche e Informatiche, Edificio di Matematica e Informatica, Universit\`a di Parma, Parco Area delle Scienze 53/A, I-43124 PARMA (Italy)}
\address{Dipartimento di Matematica e Fisica ``Ennio De Giorgi'', Universit\`a del Salento, Via per Arnesano, I-73100 LECCE (Italy)}
\address{Dipartimento di Scienze Matematiche, Fisiche e Informatiche, Edificio di Matematica e Informatica, Universit\`a di Parma, Parco Area delle Scienze 53/A, I-43124 PARMA (Italy)}
\email{d.addona@campus.unimib.it}
\email{luciana.angiuli@unisalento.it}
\email{luca.lorenzi@unipr.it}

\date{}

\keywords{Systems of elliptic operators with unbounded coefficients, invariant measures, semigroups of bounded operators, estimates for the spatial derivatives, asymptotic behaviour}
\subjclass[2000]{Primary: 35K40; Secondary 35K45, 35B40}

\begin{abstract}
In this paper, we deal with weakly coupled elliptic systems $\bm\A$ with unbounded coefficients. We prove the existence and characterize all the
systems of invariant measures for the semigroup $(\T(t))_{t\ge 0}$ associated to $\bm\A$ in $C_b(\Rd;\R^m)$. We also show some relevant properties of
the extension of $(\T(t))_{t\ge 0}$ to the $L^p$-spaces related to systems of invariant measures. Finally, we study the asymptotic behaviour of $(\T(t))_{t\ge 0}$
as $t$ tends to $+\infty$.
\end{abstract}

\maketitle

\section{Introduction}

In the last two decades, partial differential equations with unbounded coefficients
have attracted the attention of many researchers, for their remarkable applications
in economy and finance and for their strong connection with the theory of
stochastic differential equations.
Such equations appear also in the analysis of the weighted $\overline{\partial}$-problem in $\mathbb C^d$, in the time-dependent Born-Openheimer theory
and also in the study of Navier-Stokes equations. (We refer the interested reader to \cite{AALT, BGT, Dall, Ha-Rh,  Ha-He, HRS, hieber} for further details.)
In particular, the
Cauchy problems associated to second-order differential equations of elliptic and parabolic type
have been widely studied in the classical setting of bounded and continuous functions and in $L^p$-spaces, related to the Lebesgue measure
and to the so-called {\it invariant measures}.
The literature is nowadays rather rich in the case of a single equation (we refer the interested reader to \cite{newbook} for further details).
On the other hand, according to our knowledge less is known about the theory of systems (we refer the interested reader to \cite{AALT, AngLorPal, DelLor11OnA, hetal09}) and, in particular, invariant measures for systems seem to have not been studied so far.

In this paper, we consider weakly coupled elliptic operators $\bm {\mathcal A}$ defined on
smooth functions $\boldsymbol\zeta:\Rd\to\Rm$, ($m\ge 2$), by
\begin{align}\label{picco}
(\A\boldsymbol\zeta)(x)& =\sum_{i,j=1}^dq_{ij}(x)D_{ij}\boldsymbol\zeta(x)+
\sum_{j=1}^db_j(x)D_j\boldsymbol\zeta(x)+C(x)\boldsymbol\zeta(x)\notag\\
& = {\rm Tr}(Q(x)D^2\boldsymbol\zeta(x))+ \langle {\bf b}(x), \nabla \boldsymbol\zeta(x)\rangle+ C(x)\boldsymbol\zeta(x).
\end{align}

The results in \cite{AALT,AngLorPal,DelLor11OnA} show, that under mild assumptions on the coefficients
$q_{ij},b_i:\Rd\to\Rm$ ($i,j=1,\ldots,d$) and $C:\Rd\to\R^{2m}$, and assuming the existence of a so-called Lyapunov function $\varphi$
for the scalar operator $\mathcal A=\sum_{i,j=1}^d q_{ij}D_{ij}+\sum_{i=1}^db_iD_i$ (see Hypothesis \ref{hyp-base}(iv)), it is possible to associate
a semigroup $(\T(t))_{t\ge 0}$ to $\bm{\mathcal A}$ in $C_b(\Rd;\Rm)$, the space of bounded and continuous functions $\f:\Rd\to\R^m$. The semigroup
is defined in the natural way: for any $\f\in C_b(\Rd;\R^m)$ and $t>0$ $\T(t)\f$ is the value at $t$ of the unique bounded classical solution of the
Cauchy problem
\begin{equation}
\left\{
\begin{array}{ll}
D_t\uu=\A\uu, &{\rm in}~(0,+\infty)\times\R^d,\\[1mm]
\uu(0,\cdot)=\f, &{\rm in}~\Rd.
\end{array}
\right.
\label{cantare}
\end{equation}

A variant of the classical maximum principle, based on the existence of the function $\varphi$ can be used to show that, for any $t>0$, $x\in \Rd$
and $p\in (1,+\infty)$
\begin{equation}\label{usiamo_label_intro}
|(\T(t)\f)(x)|^p\le (T(t)|\f|^p)(x),\qquad\;\, \f\in C_b(\Rd;\Rm),
\end{equation}
(see \cite[Proposition 2.8]{AngLorPal}).

%Formula \eqref{usiamo_label_intro} and the contractivity of $(T(t))_{t\ge 0}$ in $C_b(\Rd)$ allow to overcome
%one of the first difficulties concerning the Cauchy problems associated to parabolic systems,
%namely the uniqueness of a classical bounded solution,
%or equivalently the validity of some maximum principles which would allow to estimate the sup-norm of the solution
%in terms of the sup-norm of the initial datum.
%Results of this type are known in the case of bounded coefficients only for some class of systems.
%In \cite{KreMaz12Max} the authors present results pertaining to various versions of the maximum principle
%for elliptic and parabolic systems of arbitrary order. In particular, necessary and
%sufficient conditions for validity of the classical maximum modulus principles for systems of second-order are given.
%In \cite{protter} a componentwise maximum principle is proved for weakly coupled parabolic system when the
%entries of the matrix $C$, more precisely the off-diagonal terms and the sum of each row, have to satisfy some sign conditions.
%
%A variant to the approach considered in \cite{KreMaz12Max} has been proposed in \cite{AALT} where
%an $L^\infty$-estimate has been proved for
%the evolution operators associated to uniformly elliptic operators with unbounded and time-dependent
%coefficients, in a greater generality than that considered here.

Differently from the case of bounded and continuous coefficients, the analysis of Markov semigroups on $L^p$-spaces is much more difficult.
In \cite{AngLorPal} a class of nonautonomous parabolic first-order coupled systems has been considered in
the Lebesgue space $L^p(\Rd; \Rm)$, $p\in [1,+\infty)$.
Sufficient conditions, consisting of quite strong growth assumptions on the coefficients of the elliptic operator $\A$,
have been supplied to guarantee that the associated evolution operator extends to $L^p(\Rd;\Rm)$. Such growth
assumptions are not merely technical conditions. Indeed, already in the scalar case, the Cauchy problem \eqref{cantare} may be not well posed in the usual $L^p$-spaces if the coefficients of the elliptic operator $\mathcal A$ are unbounded, unless they satisfy rather restrictive growth assumptions. The scalar case also shows that
a way to deal with $L^p$-spaces, under reasonable assumptions on the coefficients of the elliptic operator, is to replace the Lebesgue measure by
another measure, possibly absolutely continuous with respect to the Lebesgue one. The best situation in the scalar case
is when an invariant measure $\mu $ exists, which is a Borel probability measure such that
\begin{eqnarray*}
\int_{\R^d}T(t)f\,d\mu = \int_{\R^d} f\,d\mu , \qquad\;\, t>0, \;\,f\in C_b(\R^d),
\end{eqnarray*}
where $(T(t))_{t\ge 0}$ is the Markov semigroup naturally associated to the elliptic operator $\mathcal A$ in $C_b(\Rd)$. Under quite mild assumptions,
a unique invariant measure exists, it is equivalent to the Lebesgue measure
and is related to the asymptotic behaviour of $T(t)$, since
\begin{eqnarray*}
\lim_{t\to +\infty} (T(t)f)(x) = \int_{\R^d} f\,d\mu, \qquad\;\, f\in C_b(\R^d), \;\, x\in \R^d.
\end{eqnarray*}
Moreover, the operators $T(t)$ may easily  be extended to contractions in $L^p_{\mu}(\R^d)$, the $L^p$-space associated with the measure $\mu$, for every
$p\in [1, +\infty)$.

In this paper we give a consistent definition of invariant measures for the semigroup $(\T(t))_{t\ge 0}$ in $C_b(\Rd;\Rm)$,
providing sufficient conditions for the existence of such measures and proving that, as in the scalar case,
 the vector valued semigroup $(\T(t))_{t\ge 0}$ enjoys good properties in the $L^p$-spaces related to these measures.
It seems quite natural to expect that the single measure $\mu$ associated to a single equation
in the scalar case is replaced by an $m$-dimensional vector of measures associated to the
$m$ equations of the system.
We call \emph{system of invariant measures} for the semigroup $(\T(t))_{t\ge 0}$, a
family of {\it positive} and finite Borel measures $\{\mu_i: i=1,\ldots,m\}$ over $\Rd$ satisfying
\begin{align*}
\sum_{i=1}^m\int_{\Rd}(\T(t)\f)_id\mu_i=\sum_{i=1}^m\int_{\Rd}f_id\mu_i,\qquad\;\,\f\in C_b(\Rd;\Rm).
\end{align*}
We assume that the off-diagonal
entries of the matrix valued function $C$ are nonnegative functions (see Hypothesis \ref{hyp-base}(v)).
This additional assumption, in particular, implies that the semigroup $(\T(t))_{t\ge 0}$ is nonnegative in the sense that,
if the entries of the function $\f$ are all nonnegative, then $\T(t)\f$ has nonnegative components as well, for any $t>0$.
The componentwise positiveness of the semigroup $(\T(t))_{t\ge 0}$ is essential in our analysis to prove the existence
of a system of invariant measures. This is the reason why we confine ourselves to weakly coupled elliptic operators $\A$.

About existence and uniqueness of systems of invariant measures, Theorem \ref{exi_inv_meas} shows that, under reasonable assumptions,
there exists a unique (up to a multiplicative constant) system of invariant measures for the semigroup $(\T(t))_{t\ge 0}$. More precisely,
 if $\{\mu_j: j=1,\ldots,m\}$ is a system of invariant measures for $(\T(t))_{t\ge 0}$, then there exists
 a positive constant $c$ such that $\mu_j=c\xi_j\mu$ for any $j=1,\ldots,m$, where $\mu$ is the
 invariant measure associated to the scalar semigroup $(T(t))_{t\ge 0}$ and  $\xi=(\xi_1,\ldots,\xi_m)$ is a not trivial constant vector
which belongs to $\bigcap_{x\in \Rd}{\rm Ker}(C(x))$.
This crucial assumption together with the non positivity of the quadratic form associated to $C$ (see Hypothesis \ref{hyp-base}(iii))
 are inspired by the scalar case where the existence (and consequently the uniqueness)
of an invariant measure is guaranteed when the potential term of the elliptic operator identically vanishes on $\Rd$.
See Remark \ref{rmk-sign} for further details.

Formula \eqref{usiamo_label_intro} yields immediately that $(\T(t))_{t\ge 0}$ extends to a strongly continuous semigroup in $L^p_{\bm\mu}(\Rd;\Rm)=\bigotimes_{i=1}^mL^p_{\mu_i}(\Rd)$ for any $p\in[1,+\infty)$.
Under additional growth assumptions on the coefficients of $\A$, we prove some pointwise estimate for the first- and second-order spatial derivatives of $\T(t)\f$. More precisely, we show that
\begin{align}
\label{der_est-usiamo questa nella sezione}
|D^k_x\T(t)\f|^p \le \Gamma_{p,k,h}(t)T(t)\bigg(\sum_{j=0}^h|D^j\f|^2\bigg)^{\frac{p}{2}}
\end{align}
in $\Rd$ for any $t>0$, $\f\in C^h_b(\Rd,\R^m)$, $p>1$, $k\in\{1,2\}$ and $h\in\{0,\ldots,k\}$, where $\Gamma_{p,k,h}$ is a positive function defined in $(0,+\infty)$,
whose behaviour as $t$ tends to $0^+$ is sharp.
Clearly, in this case each $\T(t)$ is a bounded operator from $L^p_{\bm\mu}(\Rd;\R^m)$ into $W^{2,p}_{\bm\mu}(\Rd;\R^m)$
(the set of all functions $\f\in L^p_{\bm\mu}(\Rd;\Rm)$ whose distributional derivatives up to the second-order are in $L^p_{\bm\mu}(\Rd;\Rm)$)
for any $p\in (1,+\infty)$. Estimate \eqref{der_est-usiamo questa nella sezione} with $k=1$ is useful also to provide a partial characterization of the domain $D({\bf A}_p)$ of the infinitesimal generator ${\bf A}_p$ of the semigroup $(\T(t))_{t\ge 0}$ in $L^p_{\bm\mu}(\Rd;\Rm)$, since it allows us to prove that
$D({\bf A}_p)\subset W^{1,p}_{\bm\mu}(\Rd;\Rm)$. The complete characterization of $D({\bf A}_p)$ is out of the scope of this paper and, as the scalar case shows, it is
known only in some particular cases.

Finally, we relate the system of invariant measures to the asymptotic behaviour of the function $\T(t)\f$ as $t \to +\infty$.
More precisely, we assume that $|q_{ij}(x)|\le c|x|^2\varphi(x)$ ($i,j=1, \ldots,d$) and $\langle {\bf b}(x),x\rangle\le c|x|^2\varphi(x)$ as $|x|\to +\infty$, for
 some positive constant $c$, and we show that
$\T(t)\f$ converges to ${\mathcal M}_{\f}\xi:=\left (\sum_{j=1}^m\int_{\Rd}f_jd\mu_j\right )\xi$, locally uniformly in $\Rd$ as $t\to +\infty$.
As a byproduct, we deduce that, if  $\f\in L^p_{\bm\mu}(\Rd;\Rm)$, then the function $\T(t)\f$ converges to ${\mathcal M}_{\f}\xi$
also in $L^p_{\bm\mu}(\Rd;\R^m)$ as $t\to +\infty$.

The plan of the paper is the following. First in Section \ref{sect:2} we introduce some known results on equations and systems of elliptic operators and prove some basic facts which are crucial in all our analysis. Section \ref{sect:3} is devoted to the systems of invariant measures, the analysis of the semigroup $(\T(t))_{t\ge 0}$ in
$L^p$-spaces associated with systems of invariant measures and pointwise estimates for the spatial derivatives up to the second-order of the function $\T(t)\f$. Finally, in Section \ref{sect:4} we study the long time behaviour of the function $\T(t)\f$ when $\f$ is
bounded and Borel measurable and when it belongs to $L^p_{\bm\mu}(\Rd;\Rm)$ for some $p\in [1,+\infty)$.

\subsection*{Notation}
Functions with values in $\R^m$ are displayed in bold style. Given a function $\f$ (resp. a sequence
$(\f_n)$) as above, we denote by $f_i$ (resp. $f_{n,i}$) its $i$-th component (resp. the $i$-th component
of the function $\f_n$).
By $B_b(\Rd;\Rm)$ we denote the set of all the bounded Borel measurable functions $\f:\Rd\to\Rm$, where $\|f\|_{\infty}^2=\sum_{k=1}^m\sup_{x\in\Rd}|f_k(x)|^2$.
For any $k\ge 0$, $C^k_b(\R^d;\R^m)$ is the space of all the functions
whose components belong to $C^k_b(\R^d)$, where ``$b$'' stays for bounded. Similarly, we use the subscript ``$c$'' and ''$0$'' for spaces of functions with compact support and spaces of functions vanishing at infinity, respectively.
When $k\in (0,1)$ we use the subscript ``loc'' to denote the space of all $f\in C(\Rd)$
which are H\"older continuous in any compact set of $\Rd$.
We assume that the reader is familiar with the parabolic spaces $C^{h+\alpha/2,k+\alpha}(I\times \Rd)$
($\alpha\in [0,1)$, $h,k\in\N\cup\{0\}$), and we use the subscript ``loc'' with the same meaning as above.
The symbols $D_tf$, $D_i f$ and $D_{ij}f$, respectively, denote the time derivative $\frac{\partial f}{\partial t}$ and the spatial derivatives $\frac{\partial f}{\partial x_i}$ and $\frac{\partial^2f}{\partial x_i\partial x_j}$ for any $i,j=1, \ldots,d$.
For any $k\in\N$, we write $|D^k_x\uu|^2$ to denote the sum $\sum_{j=1}^m|D^k_xu_j|^2$. If $k=1$ we write $D_x\uu$ and $J_x\uu$ indifferently for the Jacobian matrix of $\uu$ with respect to the spatial variables.

By $e_j$ and $\one$ we denote, respectively, the $j$-th vector of the Euclidean basis of $\R^m$ and the function identically equal to $1$ in $\Rd$.
The open ball in $\Rd$ centered at $ 0$ with radius $r>0$ and its closure are denoted by  $B_r$ and $\overline B_r$, respectively.

\goodbreak

\section{Hypotheses and preliminary results}
\label{sect:2}
Throughout the paper, if not otherwise specified, we assume the following assumptions on the coefficients of the operator $\bm{\mathcal A}$ in \eqref{picco}.

\begin{hyp}
\label{hyp-base}
\begin{enumerate}[\rm (i)]
\item
The coefficients $q_{ij}=q_{ji}$, $b_j$ and the entries $c_{hk}$ of the nonidentically vanishing matrix
valued function $C$ belong to $C^{\alpha}_{\rm loc}(\Rd)$ for some $\alpha\in (0,1)$;
\item
the infimum $\mu_0$ over $\Rd$ of the minimum eigenvalue $\mu_Q(x)$ of the matrix $Q(x)=(q_{ij}(x))$ is positive;
\item
$\langle C(x)y,y\rangle\le 0$ for any $x\in\Rd$ and $y\in\Rm$;
\item
there exists a positive function $\varphi\in C^2(\Rd)$, blowing up as $|x|\to+\infty$
such that ${\mathcal A}\varphi(x)\le a-c\varphi(x)$ for any $x\in\Rd$ and some positive constants $a,c$,
where ${\mathcal A}={\rm Tr}(QD^2)+\langle {\bf b},\nabla\rangle$;
\item
the off-diagonal entries of the matrix valued function $C$ are nonnegative;
\item
there exists $0\neq\xi\in\Rm$ such that $\xi\in {\rm Ker}(C(x))$ for any
$x\in\Rd$;
\item
there does not exist a nontrivial set $K\subset \{1, \ldots, m\}$ such that the coefficients $c_{ij}$ identically vanish on $\Rd$ for any $i\in K$ and $j\notin K$.
\end{enumerate}
\end{hyp}

In the following Lemma \ref{carusorompi}, Theorem \ref{thm-A2} and Proposition \ref{prop-rossana} we collect some basic consequences of
the previous assumptions.

\begin{lemm}\label{carusorompi}
Let Hypotheses $\ref{hyp-base}(iii)$, $(vi)$ be satisfied. Then, the set equality
${\rm Ker}(C(x))={\rm Ker}((C(x))^*)$
holds true for any $x\in\Rd$. If, in addition, Hypothesis $\ref{hyp-base}(v)$ is satisfied, then
for any $x\in\Rd$ the spectrum of the matrix $C(x)$ is contained in the left-halfplane and $0$ is the unique
eigenvalue on the imaginary axis.
\end{lemm}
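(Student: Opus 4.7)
The two parts of the lemma can be handled by essentially independent short arguments, and my plan is to treat them separately.

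For the equality ${\rm Ker}(C(x))={\rm Ker}(C(x)^*)$, I would exploit the fact that Hypothesis \ref{hyp-base}(iii) makes the symmetric part $S(x):=\frac{1}{2}(C(x)+C(x)^*)$ negative semidefinite. Indeed, if $y\in{\rm Ker}(C(x))$, then $\langle S(x)y,y\rangle=\langle C(x)y,y\rangle=0$, and the Cauchy--Schwarz inequality for the positive semidefinite form associated with $-S(x)$, namely $|\langle S(x)y,z\rangle|^2\le \langle -S(x)y,y\rangle\,\langle -S(x)z,z\rangle$ for every $z\in\R^m$, forces $S(x)y=0$, whence $C(x)^*y=-C(x)y=0$. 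The reverse inclusion follows by swapping the roles of $C(x)$ and $C(x)^*$ (and using that $\langle C(x)^*y,y\rangle=\langle y,C(x)y\rangle=\langle C(x)y,y\rangle$ for real vectors). Hypothesis \ref{hyp-base}(vi) plays an auxiliary role here: it guarantees that the common kernel is nontrivial, since it contains $\xi$, so in particular $0\in\sigma(C(x))$ for every $x\in\Rd$.

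For the spectral statement, I would first establish that every eigenvalue of $C(x)$ has nonpositive real part via a direct computation with complex eigenvectors: given $\mu=\alpha+i\beta\in\sigma(C(x))$ with eigenvector $v=u+iw\neq 0$ and $u,w\in\R^m$, the equation $C(x)v=\mu v$ splits into $C(x)u=\alpha u-\beta w$ and $C(x)w=\beta u+\alpha w$; summing the scalar products yields $\langle C(x)u,u\rangle+\langle C(x)w,w\rangle=\alpha(|u|^2+|w|^2)$, and Hypothesis \ref{hyp-base}(iii) then gives $\alpha\le 0$.

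Finally, to exclude nonzero purely imaginary eigenvalues, I would bring in Hypothesis \ref{hyp-base}(v) through the Perron--Frobenius theorem. Taking $y=e_i$ in (iii) shows $c_{ii}(x)\le 0$, so the shifted matrix $M:=C(x)+\lambda I$, with $\lambda:=\max_{1\le i\le m}(-c_{ii}(x))\ge 0$, has nonnegative diagonal entries, while its off-diagonal entries coincide with those of $C(x)$ and are nonnegative by (v). Hence $M$ is entrywise nonnegative, and the Perron--Frobenius theorem yields $|\nu|\le\rho(M)$ for every $\nu\in\sigma(M)$. Since every such $\nu$ has the form $\lambda+\mu$ with $\mu\in\sigma(C(x))$, the previous substep gives $\re\nu\le\lambda$ and consequently $\rho(M)\le\lambda$. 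If $\mu=i\beta\in\sigma(C(x))$ were purely imaginary, then $\nu=\lambda+i\beta\in\sigma(M)$ would satisfy $\lambda^2+\beta^2=|\nu|^2\le\rho(M)^2\le\lambda^2$, forcing $\beta=0$ and hence $\mu=0$. The point I would be most careful about is the nonnegativity of the shift $M$, since this is where the Metzler-type structure supplied by Hypothesis \ref{hyp-base}(v) plays its essential role; the rest is essentially bookkeeping around the Perron--Frobenius estimate $|\nu|\le\rho(M)$.
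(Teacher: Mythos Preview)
Your proof is correct. The spectral half follows the paper's strategy, but your treatment of the kernel equality is genuinely different.

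For ${\rm Ker}(C(x))={\rm Ker}((C(x))^*)$, the paper argues by contradiction: given $\xi_0\in{\rm Ker}((C(x))^*)$ with $\eta:=C(x)\xi_0\neq 0$, one computes $\langle C(x)(\beta\xi_0+\eta),\beta\xi_0+\eta\rangle=\beta|\eta|^2+\langle C(x)\eta,\eta\rangle$, which is positive for large $\beta$, violating~(iii). Your route through the negative semidefinite symmetric part $S(x)=\tfrac12(C(x)+C(x)^*)$ and the Cauchy--Schwarz inequality for $-S(x)$ is cleaner and more structural: it isolates exactly why $C(x)y=0$ forces $C(x)^*y=0$, namely that $\langle S(x)y,y\rangle=0$ implies $S(x)y=0$. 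Neither argument actually needs Hypothesis~(vi) for the bare set equality; (vi) only guarantees that the common kernel is nontrivial, so that $0\in\sigma(C(x))$.

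For the spectral statement, your argument is essentially the paper's: shift $C(x)$ to an entrywise nonnegative matrix $M$ and invoke Perron--Frobenius. One point of phrasing to tighten: the inequality $|\nu|\le\rho(M)$ for $\nu\in\sigma(M)$ is just the definition of spectral radius, not Perron--Frobenius. The content of Perron--Frobenius you need is that $\rho(M)$ is itself a \emph{real} eigenvalue of $M$; only then does ``$\re\nu\le\lambda$ for every $\nu\in\sigma(M)$'' give $\rho(M)=\re\rho(M)\le\lambda$, after which your concluding estimate $\lambda^2+\beta^2\le\rho(M)^2\le\lambda^2$ goes through.
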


\begin{proof}
Fix $x\in\Rd$. Since $0$ is an eigenvalue of $C(x)$, it is also an eigenvalue of the adjoint matrix $(C(x))^*$.
Let $\xi_0$ be any vector such that $(C(x))^*\xi_0=0$ and, by contradiction, let us assume that $\eta:=C(x)\xi_0\neq 0$.
Let us fix $\beta>0$ and observe that
\begin{align*}
&\langle C(x)(\beta\xi_0+\eta),\beta\xi_0+\eta\rangle\\
=&
\beta^2\langle C(x)\xi_0,\xi_0\rangle+\beta \langle C(x)\xi_0,\eta\rangle
+\beta\langle C(x)\eta,\xi_0\rangle+\langle C(x)\eta,\eta\rangle\\
=&\beta^2\langle \xi_0,(C(x))^*\xi_0\rangle+\beta |\eta|^2
+\beta\langle \eta,(C(x))^*\xi_0\rangle+\langle C(x)\eta,\eta\rangle\\
=&\beta |\eta|^2+\langle C(x)\eta,\eta\rangle.
\end{align*}
It is clear that we can fix $\beta>0$ such that
$\langle C(x)(\beta\xi_0+\eta),\beta\xi_0+\eta\rangle>0$ getting to a contradiction.
The inclusion ${\rm Ker}((C(x))^*)\subset {\rm Ker}(C(x))$ follows.
Since $\langle C(x)y,y\rangle=\langle (C(x))^*y,y\rangle$ for any $y\in\R^m$, the same arguments above applied to $(C(x))^*$
yield the other inclusion ${\rm Ker}(C(x))\subset {\rm Ker}((C(x))^*)$.

Let us complete the proof by checking the last statement.
To begin with, we observe that $C(x)$ has not eigenvalues $\lambda$ with positive real part. This is clear if $\lambda$ is real. Indeed, denoting by $\eta$
a corresponding unit eigenvector, we would get $0<\lambda=\langle C(x)\eta,\eta\rangle$ contradicting Hypothesis \ref{hyp-base}(iii).
If $\lambda\in \mathbb C\setminus\R$ and $\eta$ is as above, then $\eta=\eta_1+i\eta_2$ for some $\eta_1,\eta_2\in\R$. It is immediate to check that
$0<{\rm Re}\lambda=\langle C(x)\eta_1,\eta_1\rangle+\langle C(x)\eta_2,\eta_2\rangle$, again contradicting Hypothesis \ref{hyp-base}(iii).

To prove that $0$ is the unique eigenvalue of $C(x)$ on the imaginary axis, we fix $\lambda$ sufficiently large such that
$\lambda+\mu>0$ for any real eigenvalue of $C(x)$ and $\lambda+c_{ii}(x)>0$ for any $i=1,\ldots,m$. With this choice
all the elements of the matrix $C_{\lambda}(x):=C(x)+\lambda I$ are nonnegative. Moreover, since $\sigma(C_{\lambda}(x))=\sigma(C(x))+\lambda$,
all the real eigenvalues of $C_{\lambda}(x)$ are positive and $\lambda$ is the greatest one.
A generalization of Perron-Frobenius Theorem (see \cite[Theorem 2.7]{gersh})
implies that the spectral radius of $C_{\lambda}(x)$ (i.e., the maximum of the moduli of the eigenvalues of $C_{\lambda}(x)$) belongs to $\sigma(C_{\lambda}(x))$. From this
and the above remarks it follows that $\lambda$ is the maximum of the eigenvalues of $C_{\lambda}(x)$. Coming back to $C(x)$, we conclude that this matrix has not
nontrivial eigenvalues on the imaginary axis and we are done.
\end{proof}

\begin{rmk}\label{nosym}{\rm
We stress that our assumptions on $C$ in general do not imply that $C(x)$ is symmetric for some $x\in \Rd$.
Indeed, it is immediate to check that the matrix valued function $C$ defined by
%\begin{align*}
%C=
%\begin{pmatrix}
%-1 & 1/2 & 1/2\\
%1/3 & -1 & 2/3 \\
%2/3 & 1/2 & -7/6
%\end{pmatrix},
%\end{align*}
\begin{align*}
C(x)=
\begin{pmatrix}
-\zeta_1(x)-\zeta_2(x) & \zeta_1(x) & \zeta_2(x)\\
\zeta_2(x) & -\zeta_1(x)-\zeta_2(x)-\zeta_3(x) & \zeta_1(x)+\zeta_3(x) \\
\zeta_1(x) & \zeta_2(x)+\zeta_3(x) & -\zeta_1(x)-\zeta_2(x)-\zeta_3(x)
\end{pmatrix}
\end{align*}
for any $x\in\Rd$, satisfies Hypotheses \ref{hyp-base}(iii), (v)-(vii) for any triplet of positive locally H\"older continuous functions $\zeta_1,\zeta_2,\zeta_3:\Rd\to\R$.}
\end{rmk}

Under Hypotheses \ref{hyp-base}(i)-(iv), for any $\f\in C_b(\Rd;\R^m)$ the Cauchy problem
\begin{equation}
\left\{
\begin{array}{lll}
D_t\uu(t,x)=\A\uu(t,x), & t\in(0,+\infty), &x\in\Rd,\\[1mm]
\uu(0,x)=\f(x), && x \in \Rd
\end{array}
\right.
\label{pioggia}
\end{equation}
admits a unique classical solution
$\uu \in C^{1,2}((0,+\infty)\times\Rd;\Rm)\cap C_b([0,+\infty)\times\Rd;\Rm)$. Function $\uu$
satisfies the estimate $\|\uu\|_{\infty}\le\|\f\|_{\infty}$ and can be obtained equivalently as the limit in $C^{1,2}_{\rm loc}((0,+\infty)\times\Rd;\Rm)$
\begin{enumerate}[\rm (i)]
\item
of the sequence $(\uu_n)$ of classical solutions to the Cauchy-Dirichlet problem
\begin{equation}\label{prob_approx_Dir}
\left\{
\begin{array}{lll}
D_t\uu_n(t,x)=\boldsymbol{\mathcal A}\uu_n(t,x), \qquad\quad& t\in (0,+\infty), \;\,x\in B_n,\\[1mm]
\uu_n(t,x)={\bf 0}, & t\in (0,+\infty),\;\,x\in\partial B_n,\\[1mm]
\uu_n(0,x)=\f(x), & x\in B_n;
\end{array}
\right.
\end{equation}
\item
of the sequence $({\bf v}_n)$ of classical solutions to the Cauchy-Neumann problem
\begin{equation}\label{prob_approx_Neu}
\left\{
\begin{array}{lll}
D_t\vv_n(t,x)=\boldsymbol{\mathcal A}\vv_n(t,x), \qquad\quad& t\in (0,+\infty), \;\,x\in B_n,\\[1mm]
\displaystyle\frac{\partial\uu_n}{\partial\nu}(t,x)={ \bf 0}, & t\in (0,+\infty),\;\,x\in\partial B_n,\\[2mm]
\vv_n(0,x)=\f(x), & x\in\overline B_n,
\end{array}
\right.
\end{equation}
where $\nu$ denotes the unit exterior normal vector to $\partial B_n$.
\end{enumerate}
We refer the reader to \cite{AALT,AngLorPal,DelLor11OnA} for more details.

The above result allowed the authors
of \cite{DelLor11OnA} to associate a semigroup $(\T(t))_{t\ge 0}$ (in the sequel simply denoted by $\T(t)$) of bounded operators in $C_b(\Rd;\Rm)$ with the operator $\A$ in \eqref{picco}: for any $\f\in C_b(\Rd;\Rm)$
and $t>0$, $\T(t)\f$ is the value at $t$ of the unique bounded classical solution to problem \eqref{pioggia}.
In \cite[Theorem 3.2]{AALT}, actually in a greater generality, it has been proved that the semigroup $\T(t)$
admits an integral representation formula in terms of some finite Borel measures. More precisely,
\begin{equation}
(\T(t)\f)_i(x)=\sum_{j=1}^m\int_{\Rd}f_j(y)p_{ij}(t,x,dy),\qquad\;\,\f\in C_b(\Rd;\R^m).
\label{for_rep_1}
\end{equation}
The measures $p_{ij}(t,x,dy)$ are absolutely continuous with respect to the Lebesgue measure but, in general, differently from the scalar case,
they are signed measures.
Through formula \eqref{for_rep_1} the semigroup $\T(t)$ can be extended to
$B_b(\Rd;\R^m)$, with a strong Feller semigroup (i.e., $\T(t)\f$ belongs to $C_b(\Rd)$ for any $t>0$ and $\f\in B_b(\Rd;\R^m)$; actually, $\T(t)\f$ belongs to $C^2(\Rd)$
as a consequence of interior Schauder estimates).
Moreover,
\begin{equation}
|\T(t)\f|^p\le T(t)|\f|^p,\qquad\;\,t>0,\;p>1,\;\,\f\in C_b(\Rd;\Rm),
\label{vector-scalar}
\end{equation}
where $T(t)$ is the semigroup of contractions associated in $C_b(\Rd)$ to the operator ${\mathcal A}$ (see Hypothesis \ref{hyp-base}(iv)).
More precisely (see \cite[Chapters 1 \& 9]{newbook}),
\begin{thm}
\label{thm-valle}
Under Hypothesis $\ref{hyp-base}(i)$, $(ii)$ and $(iv)$ there exists a Markov contraction semigroup $T(t)$ associated to ${\mathcal A}={\rm Tr}(QD^2)+\langle {\bf b},\nabla\rangle$ in $C_b(\Rd)$. For any $f\in C_b(\Rd)$, $T(\cdot)f$ is the unique solution in $C_b([0,+\infty)\times\Rd)\cap C^{1,2}((0,+\infty)\times\Rd)$
to the differential equation $D_tu-{\mathcal A}u=0$, which satisfies the condition $u(0,\cdot)=f$.
For any $f\in C_b(\Rd)$ it holds that
\begin{eqnarray}\label{rep_int}
(T(t)f)(x)=\int_{\Rd}f(y)p(t,x,dy),\qquad\;\,t>0,\;\,x\in\Rd,
\end{eqnarray}
where each $p(t,x,dy)$ is a Borel probability measure which admits a strictly positive density with respect to the Lebesgue measure.
As a byproduct, if $f\ge 0$ does not identically vanishes in $\Rd$, then
$T(\cdot)f$ is strictly positive in $(0,+\infty)\times\Rd$ and
\begin{eqnarray*}
|T(t)f|^p\le T(t)|f|^p,
\end{eqnarray*}
in $\Rd$ for any $t>0$, $p\in (1,+\infty)$ and $f\in C_b(\Rd)$.
Finally, there exists a unique invariant measure $\mu$ associated with the semigroup $T(t)$, i.e., there exists a unique Borel probability measure $\mu$ such that
\begin{eqnarray*}
\int_{\Rd}T(t)fd\mu=\int_{\Rd}fd\mu,\qquad\;\,f\in C_b(\Rd),\;\,t>0.
\end{eqnarray*}
\end{thm}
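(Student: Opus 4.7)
The plan is to mirror the construction of the vector-valued semigroup $\T(t)$ already sketched in the excerpt, but in the simpler scalar setting of Hypotheses \ref{hyp-base}(i), (ii), (iv). First I would solve, for each fixed $n\in\N$ and $f\in C_b(\Rd)$, the Cauchy-Dirichlet problem for $\mathcal A$ on the ball $B_n$ with zero boundary data; classical parabolic theory (interior Schauder estimates, uniform ellipticity from (ii) on $B_n$, H\"older regularity from (i)) yields a unique solution $u_n\in C^{1,2}((0,+\infty)\times B_n)\cap C([0,+\infty)\times\overline{B_n})$ whose $L^{\infty}$-norm is controlled by $\|f\|_\infty$. The family $(u_n)$ is monotone in $n$ if $f\ge 0$, hence (decomposing $f=f^+-f^-$) converges locally uniformly and, by Schauder estimates, in $C^{1,2}_{\rm loc}((0,+\infty)\times\Rd)$ to some $u$ solving $D_tu=\mathcal A u$ with $u(0,\cdot)=f$ and $\|u\|_\infty\le\|f\|_\infty$. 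The Lyapunov function from Hypothesis \ref{hyp-base}(iv) enters at uniqueness: if $w$ is a bounded classical solution vanishing at $t=0$, the function $(t,x)\mapsto \pm w(t,x)-\varepsilon e^{ct}(\varphi(x)+a/c)$ attains its parabolic maximum on $[0,T]\times\overline{B_R}$ either at $t=0$ or on $\partial B_R$, and letting first $R\to+\infty$ (so the boundary contribution vanishes, thanks to the blow-up of $\varphi$) and then $\varepsilon\to 0^+$ forces $w\equiv 0$. Setting $(T(t)f)(x):=u(t,x)$, linearity and the semigroup property follow from uniqueness, and $T(t)$ is a contraction on $C_b(\Rd)$.

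For the representation formula I would observe that $f\mapsto (T(t)f)(x)$ is a positive linear functional on $C_b(\Rd)$ bounded by $\|f\|_\infty$, and that applying the uniqueness statement to $u\equiv 1$ gives $T(t)\one=\one$. The Riesz-Markov theorem then produces a Borel probability measure $p(t,x,\cdot)$ satisfying \eqref{rep_int}. Absolute continuity with respect to the Lebesgue measure comes from a standard duality argument with the formal adjoint $\mathcal A^*$ (or, alternatively, by solving the transition density equation via parametrix), and strict positivity of the density is a consequence of the classical strong maximum principle: if $f\in C_c(\Rd)$ is nonnegative and not identically zero, then $T(\cdot)f$ is a nonnegative classical solution of $D_tu=\mathcal A u$ that is not identically zero, so it is strictly positive in $(0,+\infty)\times\Rd$, which forces the density to be strictly positive as well. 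The inequality $|T(t)f|^p\le T(t)|f|^p$ is then immediate from Jensen's inequality applied to the probability measure $p(t,x,\cdot)$.

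Finally, the invariant measure is obtained via the Krylov-Bogoliubov procedure. Fix any $x_0\in\Rd$ and define $\mu_t:=t^{-1}\int_0^t p(s,x_0,\cdot)ds$. Applying the semigroup to a suitable approximation of $\varphi$ and using $\mathcal A\varphi\le a-c\varphi$ one obtains $(T(s)\varphi)(x_0)\le a/c+e^{-cs}(\varphi(x_0)-a/c)$; since $\varphi$ is coercive, this yields tightness of $\{\mu_t\}_{t>0}$. Any weak-$*$ limit point $\mu$ is a probability measure, and the telescoping identity $\mu_t(T(r)f)-\mu_t(f)=t^{-1}\int_t^{t+r}p(s,x_0,f)ds-t^{-1}\int_0^rp(s,x_0,f)ds$ passes to the limit in $t$ to give the invariance $\int T(r)f\,d\mu=\int f\,d\mu$ for every $r>0$ and $f\in C_b(\Rd)$. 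Uniqueness is the step I expect to be the most delicate: it follows from Doob's theorem applied to $T(t)$, which is strong Feller because $T(t)f\in C^2(\Rd)$ by interior Schauder estimates, and irreducible because all the kernels $p(t,x,\cdot)$ are equivalent to the Lebesgue measure by the positivity of the density; consequently any two invariant probability measures would have to be both equivalent to Lebesgue and mutually singular, a contradiction. The detailed implementation of all these steps is exactly the content of Chapters 1 and 9 of \cite{newbook}.
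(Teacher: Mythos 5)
The paper does not prove Theorem~\ref{thm-valle}; it states it as a known result and cites \cite[Chapters 1 \& 9]{newbook}. Your reconstruction (approximation by Cauchy--Dirichlet problems on balls, uniqueness via the Lyapunov function $\varphi$ and the maximum principle, Riesz--Markov for the kernel representation, strong maximum principle for strict positivity of the density, Jensen for the pointwise $p$-power estimate, Krylov--Bogoliubov with tightness from $\mathcal A\varphi\le a-c\varphi$, and Doob's theorem for uniqueness of $\mu$) is the standard argument that the cited reference implements, and all the steps you outline are correct.
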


\begin{rmk}
\label{rmk-2.5}
{\rm Some of the results in Theorem \ref{thm-valle} can be extended also to the case of elliptic operators with a nontrivial potential term.
More precisely, if ${\mathcal A}_c={\rm Tr}(QD^2)+\langle {\bf b},\nabla\rangle+c$, where the diffusion and drift coefficients
$q_{ij}$ and $b_j$ $(j=1,\ldots,d)$ satisfy Hypotheses \ref{hyp-base} and $c\in C^{\alpha}_{\rm loc}(\Rd)$ is bounded from above, then
`the Cauchy problem
\begin{equation}
\left\{
\begin{array}{lll}
D_tu(t,x)={\mathcal A}_cu(t,x), &t\in (0,+\infty), &x\in\Rd,\\[1mm]
u(0,x)=f(x), &&x\in\Rd,
\end{array}
\right.
\label{terzo}
\end{equation}
admits (at least) one solution $u\in C^{1+\alpha/2,2+\alpha}_{\rm loc}((0,+\infty)\times\Rd)\cap C([0,+\infty)\times\Rd)$ which satisfies
the estimate $\|u(t,\cdot)\|_{\infty}\le e^{c_0t}\|f\|_{\infty}$ for any $t>0$, where $c_0$ is the supremum over $\Rd$ of the function $c$.
Uniqueness may fail, but in any case, if $f\ge 0$, the above Cauchy problem admits a minimal solution $u$, in the sense that, if
$v$ is any other solution, then $v(t,x)\ge u(t,x)$ for any $(t,x)\in [0,+\infty)\times\Rd$. This allows to associate
a semigroup of bounded operators in $C_b(\Rd)$ with the operator ${\mathcal A}_c$: for any $f\in C_b(\Rd)$,
$T(\cdot)f=u_+-u_-$ where $u_+$ and $u_-$ are the minimum nonnegative solutions to the Cauchy problem
\eqref{terzo} with $f$ being replaced, respectively, by the positive and negative part of $f$.
Moreover, if $f\ge 0$ does not identically vanish on $\Rd$, then $T(\cdot)f$ is strictly positive on $(0,+\infty)\times\Rd$. See \cite{AngLor10Com} for further details.
}
\end{rmk}

In view of \eqref{vector-scalar} and Theorem \ref{thm-valle}, we conclude that $\T(t)$ is a contraction semigroup in $C_b(\Rd;\R^m)$, i.e.,
\begin{equation}
|(\T(t)\f)(x)|\le\|\f\|_{\infty},\qquad\;\,t>0,\;\,x\in\Rd.
\label{tiburtina}
\end{equation}

Hypothesis \ref{hyp-base}(v) is the key tool to prove the positivity of the semigroup $\T(t)$.
In the proof of the following Proposition \ref{posi} we shall make use of the following interior Schauder estimates.

\begin{thm}[Proposition A.1 \& Theorem A.2 of \cite{AALT}]
\label{thm-A2}
Let Hypotheses $\ref{hyp-base}(i)$, $(ii)$ hold. Let further $\uu\in C^{1+\alpha/2,2+\alpha}_{\rm loc}((0,T]\times\Rd;\R^m)$ satisfy the differential equation
$D_t\uu = \bm{\mathcal A}\uu+\g$ in $(0,T]\times\Rd$, for some $\g\in C^{\alpha/2,\alpha}_{\rm loc}((0,T]\times\Rd;\R^m)$ and $T>0$.
Then, for any $\tau\in (0,T)$ and any pair of bounded open sets $\Omega_1$ and $\Omega_2$ such that $\Omega_1$ is compactly supported in $\Omega_2$, there exists a positive
constant $K_1$, depending on $\Omega_1, \Omega_2, \tau, T$, but being independent of $\uu$, such that
\begin{align*}
&\|\uu\|_{C^{1+\alpha/2,2+\alpha}((\tau,T]\times\Omega_1;\R^m)}\notag\\
\le & K_1(\|\uu\|_{C_b((\tau/2,T]\times\Omega_2;\R^m)}+\|\g\|_{C^{\alpha/2,\alpha}((\tau/2,T]\times\Omega_2;\R^m)}).
\end{align*}
Further, if $\g\in C^{\alpha/2,\alpha}_{\rm loc}([0,T]\times\Rd;\R^m)$ and $\uu\in C^{1+\alpha/2,2+\alpha}_{\rm loc}((0,T]\times\Rd;\R^m)\cap C([0,T]\times\Rd)$,
then, for any $0<r_1<r_2$ there exists a positive constant $K_2$, depending on $r_1$, $r_2$ and $T$ but being independent of $\uu$, such that
\begin{align*}
&t\|D^2_x\uu(t,\cdot)\|_{C(\overline B_{r_1};\R^m)}+\sqrt{t}\|J_x\uu(t,\cdot)\|_{C(\overline B_{r_1};\R^m)}\\
\le &K_2(\|\uu\|_{C([0,T]\times\overline B_{r_2})}
+\|{\bf g}\|_{C^{\alpha/2,\alpha}([0,T]\times\overline B_{r_2};\R^m)})
\end{align*}
for any $t\in (0,T]$.
\end{thm}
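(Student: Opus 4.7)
\noindent\emph{Proof plan.} The plan is to reduce the weakly coupled system to a family of scalar equations and then invoke classical interior Schauder estimates for scalar parabolic operators with H\"older continuous coefficients. Writing $D_t\uu=\A\uu+\g$ componentwise, one obtains, for every $i=1,\ldots,m$,
\[
D_tu_i-\mathcal Au_i=g_i+\sum_{k=1}^mc_{ik}u_k=:\tilde g_i,
\]
where $\mathcal A={\rm Tr}(QD^2)+\langle\mathbf b,\nabla\rangle$ is the scalar operator of Hypothesis \ref{hyp-base}(iv). The strategy is then to apply the scalar parabolic Schauder estimate, treating the coupling $\sum_k c_{ik}u_k$ as part of the forcing, and to close the loop by controlling its $C^{\alpha/2,\alpha}$-norm in terms of the data and of $\|\uu\|_\infty$.

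For the first estimate I would fix a finite chain $\Omega_1\subset\Omega^{(1)}\subset\cdots\subset\Omega^{(N)}=\Omega_2$ of nested open sets together with a decreasing sequence of time levels $\tau=\tau_0>\tau_1>\cdots>\tau_N=\tau/2$, and iterate between consecutive members of the chain. At each step the scalar interior Schauder estimate bounds $\|u_i\|_{C^{1+\alpha/2,2+\alpha}}$ on the smaller set in terms of $\|u_i\|_\infty$, $\|g_i\|_{C^{\alpha/2,\alpha}}$ and $\|\uu\|_{C^{\alpha/2,\alpha}}$ on the larger one, where the last term arises from $C\uu$ together with the H\"older continuity of the entries of $C$. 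The unwanted H\"older seminorm of $\uu$ is then absorbed by the interpolation inequality
\[
\|\uu\|_{C^{\alpha/2,\alpha}}\le\varepsilon\|\uu\|_{C^{1+\alpha/2,2+\alpha}}+C_\varepsilon\|\uu\|_\infty,
\]
with $\varepsilon$ suitably small and iterated along the chain; alternatively, a preliminary $C^{\alpha/2,\alpha}$-bound for $\uu$ can be obtained by a De Giorgi-Nash-Moser type estimate applied componentwise to the scalar equation with $L^\infty$-forcing. After finitely many iterations only $\|\uu\|_\infty$ and $\|\g\|_{C^{\alpha/2,\alpha}}$ on $\Omega_2$ survive on the right-hand side, yielding the claimed bound.

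For the sharper estimate near $t=0$ the natural tool is parabolic rescaling. Fix $t\in(0,T]$ and $x_0\in\overline B_{r_1}$, set $\rho:=\min(\sqrt t,(r_2-r_1)/3)$ and define
\[
\tilde\uu(s,y):=\uu(t+\rho^2(s-1),x_0+\rho y),\qquad s\in(0,1],\;y\in B_1.
\]
Then $\tilde\uu$ solves a parabolic system on $(0,1]\times B_1$ with rescaled coefficients $\tilde q_{ij}(s,y)=q_{ij}(x_0+\rho y)$, $\tilde b_j(s,y)=\rho b_j(x_0+\rho y)$, $\tilde C(s,y)=\rho^2 C(x_0+\rho y)$ and forcing $\tilde\g(s,y)=\rho^2\g(t+\rho^2(s-1),x_0+\rho y)$. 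Since $\rho\le\sqrt T$, the rescaled coefficients enjoy H\"older bounds that are uniform in $t$ and $x_0$ on $B_1$ and the ellipticity constant $\mu_0$ is preserved; applying the first part of the theorem to $\tilde\uu$ on $B_{1/2}\subset B_1$ and evaluating at $(s,y)=(1,0)$ yields a bound for $|D^2_y\tilde\uu(1,0)|$ and $|J_y\tilde\uu(1,0)|$ in terms of $\|\tilde\uu\|_\infty$ and $\|\tilde\g\|_{C^{\alpha/2,\alpha}}$. Undoing the scaling gives $D^2_y\tilde\uu(1,0)=\rho^2 D^2_x\uu(t,x_0)$ and $J_y\tilde\uu(1,0)=\rho J_x\uu(t,x_0)$, so the choice $\rho=\sqrt t$ produces the $t$ and $\sqrt t$ weights; the complementary regime $\sqrt t\ge(r_2-r_1)/3$ is controlled directly by the first part of the theorem. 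The main obstacle is the bootstrap in the first part, namely controlling the H\"older norm of $\uu$ introduced by the coupling $C\uu$; once this is handled, the rescaling argument in the second part is bookkeeping with the correct powers of $\rho$.
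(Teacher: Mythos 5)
This statement is imported, not proved: the paper cites it directly as ``Proposition A.1 \& Theorem A.2 of \cite{AALT}'', so there is no internal proof to compare your argument against. Judged on its own merits, your strategy --- reduce to scalar equations by placing $C\uu$ in the forcing, apply classical interior Schauder estimates componentwise, then obtain the weighted estimates near $t=0$ by parabolic rescaling --- is the standard route and the rescaling step is carried out correctly: the rescaled coefficients $\tilde q_{ij}$, $\tilde b_j=\rho b_j$, $\tilde C=\rho^2 C$ have H\"older norms on $B_1$ that are uniform over $t\in(0,T]$ and $x_0\in\overline B_{r_1}$ once $\rho=\min(\sqrt t,(r_2-r_1)/3)$, ellipticity is preserved, and the complementary regime $\sqrt t\gtrsim r_2-r_1$ follows directly from the first estimate because there $t$ is bounded away from $0$.

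The one place that needs tightening is the absorption argument in the first part. As written, the finite chain $\Omega_1\subset\Omega^{(1)}\subset\cdots\subset\Omega^{(N)}=\Omega_2$ combined with the interpolation inequality
\[
\|\uu\|_{C^{\alpha/2,\alpha}(Q')}\le \varepsilon\|\uu\|_{C^{1+\alpha/2,2+\alpha}(Q')}+C_\varepsilon\|\uu\|_{\infty,Q'}
\]
does not close: at each step the $C^{1+\alpha/2,2+\alpha}$-norm that reappears on the right lives on a \emph{strictly larger} cylinder, and after $N$ steps one is left with $(K\varepsilon)^N\|\uu\|_{C^{1+\alpha/2,2+\alpha}(Q_2)}$, which is not controlled by the data. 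To make this rigorous one must pass to a continuum of nested cylinders and a weighted seminorm $\Phi(\sigma)=\operatorname{dist}(\sigma)^{2+\alpha}\|\uu\|_{C^{1+\alpha/2,2+\alpha}(Q_\sigma)}$, note \emph{a priori} that $\sup_\sigma\Phi(\sigma)<\infty$ because $\uu\in C^{1+\alpha/2,2+\alpha}_{\rm loc}$, and invoke the usual iteration (``hole-filling'' or Simon-type) lemma. Your alternative route --- first obtain a quantitative interior $C^{\theta/2,\theta}$-bound for each $u_i$ from the scalar equation with $L^\infty$ forcing $g_i+\sum_k c_{ik}u_k$ (which depends only on $\|\uu\|_\infty$ and $\|\g\|_\infty$), then bootstrap through Schauder with exponent $\theta$ and finally $\alpha$ --- avoids the circularity entirely and is the cleaner choice; I would present that as the main argument rather than the interpolation chain.
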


Throughout the paper we shall make also use of the following local (in space) compactness property of the semigroup $\T(t)$ in $C_b(\Rd;\R^m)$.

\begin{prop}
\label{prop-rossana}
Under Hypotheses $\ref{hyp-base}(i)$-$(iv)$, for any bounded sequence $(\f_n)\subset C_b(\Rd;\R^m)$  and for any
$t_0>0$, there exists a subsequence $(\f_{n_k})$ such that $\T(\cdot)\f_{n_k}$ converges uniformly in $(t_0,+\infty)\times B_m$ for every $m>0$.
In particular, if $\f_n$ converges locally uniformly on $\Rd$ to $\f$, then
$\T(\cdot)\f_n$ converges uniformly in $(0,+\infty)\times B_r$ to $\T(\cdot)\f$ for any $r>0$.
\end{prop}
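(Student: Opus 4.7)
The argument combines the interior Schauder estimates of Theorem \ref{thm-A2} with the scalar comparison \eqref{vector-scalar} and the uniform tightness of the scalar transition measures furnished by the Lyapunov function.

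Set $M:=\sup_{n}\|\f_n\|_\infty<+\infty$ and $\uu_n:=\T(\cdot)\f_n$. By \eqref{tiburtina}, $\|\uu_n(t,\cdot)\|_\infty\le M$ for every $t\ge 0$ and $n\in\N$. Each $\uu_n$ solves $D_t\uu_n=\A\uu_n$ in $(0,+\infty)\times\Rd$, so the first estimate of Theorem \ref{thm-A2} (with $\g=0$) gives, for any $0<\tau<T$ and any concentric balls $B_\rho\Subset B_{\rho'}$,
\begin{equation*}
\|\uu_n\|_{C^{1+\alpha/2,2+\alpha}((\tau,T]\times\overline B_\rho;\R^m)}\le K_1(\tau,T,\rho,\rho')M,
\end{equation*}
independently of $n$. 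Choosing $\tau_j=t_0+1/j$, $T_j=j$, $\rho_j=j$, $\rho_j'=j+1$ and applying Arzel\`a--Ascoli on each compact set $K_j:=[\tau_j,T_j]\times\overline B_j$, then diagonalising, I produce a subsequence $(\f_{n_k})$ such that $\T(\cdot)\f_{n_k}$ converges uniformly on every compact subset of $(t_0,+\infty)\times\Rd$; in particular on $(t_0,T]\times B_m$ for all $T>t_0$ and $m>0$, which is the first assertion.

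Assume next that $\f_n\to\f$ locally uniformly on $\Rd$, so that $\|\f\|_\infty\le M$. Fix $r>0$ and some $p>1$. Applying \eqref{vector-scalar} to $\f_n-\f$ and using linearity of $\T(t)$ yields
\begin{equation*}
|\T(t)\f_n(x)-\T(t)\f(x)|^p\le (T(t)|\f_n-\f|^p)(x),\qquad t>0,\ x\in\Rd,
\end{equation*}
and the scalar representation \eqref{rep_int} gives, for every $R>0$,
\begin{equation*}
(T(t)|\f_n-\f|^p)(x)\le \|\f_n-\f\|_{C(\overline B_R;\R^m)}^p + (2M)^p\, p(t,x,\Rd\setminus B_R).
\end{equation*}
From Hypothesis \ref{hyp-base}(iv), $w(t,x):=\varphi(x)e^{-ct}+(a/c)(1-e^{-ct})$ is a classical supersolution of $D_tw=\mathcal{A}w$ with $w(0,\cdot)=\varphi$, so a standard maximum principle argument (applied to truncations of $\varphi$) yields $T(t)\varphi\le w\le\max\{\varphi,a/c\}$. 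Consequently, setting $M_r:=\max_{\overline B_r}\varphi+a/c$, Chebyshev's inequality delivers
\begin{equation*}
p(t,x,\Rd\setminus B_R)\le\frac{M_r}{\inf_{|y|\ge R}\varphi(y)}\longrightarrow 0 \quad\text{as } R\to+\infty,
\end{equation*}
uniformly for $(t,x)\in(0,+\infty)\times\overline B_r$. Given $\varepsilon>0$, first fix $R$ so large that the tail is below $\varepsilon$, then use local uniform convergence of $\f_n$ on $\overline B_R$ to bound the first summand by $\varepsilon$ for $n$ large; this gives the claimed uniform convergence on $(0,+\infty)\times B_r$.

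The delicate point is the uniform-in-time tightness of $\{p(t,x,\cdot):t>0,\,x\in\overline B_r\}$: the crude Dynkin-type bound $T(t)\varphi\le\varphi+at$ would degenerate as $t\to+\infty$, so one has to exploit the coercive form of the Lyapunov inequality $\mathcal{A}\varphi+c\varphi\le a$ with $c>0$ in order to produce the globally-in-time bounded supersolution $w$. Everything else is a routine interplay of interior parabolic regularity and diagonal extraction.
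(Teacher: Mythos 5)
Your second part (local uniform convergence of $\f_n$ to $\f$ implying uniform convergence of $\T(\cdot)\f_n$ on $(0,+\infty)\times B_r$) is correct and tracks the paper's own argument closely: the key step in both is the tightness of $p(t,x,\cdot)$ uniformly in $t>0$ and locally uniformly in $x$, deduced from the coercive Lyapunov bound. Your derivation of the global-in-time supersolution $w(t,x)=\varphi(x)e^{-ct}+(a/c)(1-e^{-ct})$ is a clean, self-contained way to get what the paper pulls from \cite[Lemma 5.3]{KunLorLun09Non}, and your remark about why the non-coercive Dynkin bound $T(t)\varphi\le\varphi+at$ would be useless is exactly the right point.

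Your first part, however, has a genuine gap. Diagonalising Arzel\`a--Ascoli over the exhausting compacta $K_j=[\tau_j,T_j]\times\overline B_j$ only yields a subsequence converging uniformly on \emph{compact} subsets of $(t_0,+\infty)\times\Rd$. But the assertion is uniform convergence on $(t_0,+\infty)\times B_m$, which is unbounded in $t$: convergence on $(t_0,T]\times B_m$ for every finite $T$ is strictly weaker. Moreover, the Schauder constant $K_1$ in Theorem~\ref{thm-A2} depends on $T$, so the bound you extract is not even $T$-uniform; and even with a $T$-uniform interior estimate (obtainable by shifting), equicontinuity on compacta cannot by itself control the tail $t\to+\infty$. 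Nothing in the Schauder machinery prevents, say, a drifting bump that escapes to larger and larger $t$ along the subsequence.

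The remedy is already sitting in your second part, used exactly as in the paper. Apply Schauder and Arzel\`a--Ascoli only at the single time slice $t=t_0$ to extract $\T(t_0)\f_{n_k}\to\g$ locally uniformly on $\Rd$. Then for $t>t_0$, by \eqref{vector-scalar},
\[
|\T(t)\f_{n_k}-\T(t-t_0)\g|^2\le T(t-t_0)\bigl(|\T(t_0)\f_{n_k}-\g|^2\bigr),
\]
and $\psi_k:=|\T(t_0)\f_{n_k}-\g|^2$ is a uniformly bounded sequence in $C_b(\Rd)$ converging locally uniformly to $0$. Your tightness estimate now shows that $T(\cdot)\psi_k\to 0$ uniformly on $(0,+\infty)\times B_m$, hence $\T(t)\f_{n_k}\to\T(t-t_0)\g$ uniformly on $(t_0,+\infty)\times B_m$. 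Without this propagation step the first assertion does not follow.
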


\begin{proof}
To begin with, we observe that the Schauder estimates in Theorem  \ref{thm-A2} show that the sequence
$(\T(t_0)\f_n)$ is bounded in $C^{2+\alpha}(B_r;\R^m)$ for every $r>0$. Hence, by Arzel\`a-Ascoli theorem and a compactness argument,
we can easily prove that there exists a subsequence $(\T(t_0)\f_{n_k})$ which converges locally uniformly in $\Rd$ to a function $\g\in C_b(\Rd;\R^m)$.

Next, we observe that the arguments in the proof of \cite[Lemma 5.3]{KunLorLun09Non} show that
$T(t)\varphi\le \varphi+c^{-1}a$ in $\Rd$ for every $t>0$, where $\varphi$ is the function in Hypothesis \ref{hyp-base}(iv). From this estimate we easily deduce that
$\sup_{t>0}p(t,x,\Rd\setminus B_r)$ tends to $0$, locally uniformly with respect to $x$, as $r$ tends to $+\infty$.
Indeed,
\begin{align*}
p(t,x,\Rd;\setminus B_r)=&\int_{\Rd\setminus B_r}p(t,x,dy)\le \frac{1}{\inf_{\Rd\setminus B_r}\varphi}\int_{\Rd}\varphi(y) p(t,x,dy)\\
\le &\frac{1}{\inf_{\Rd\setminus B_r}\varphi}(T(t)\varphi)(x)\le \frac{1}{\inf_{\Rd\setminus B_r}\varphi}(\varphi(x)+c^{-1}a)
\end{align*}
and $\varphi$ blows up as $|x|\to +\infty$.
As a byproduct, we can infer that, if $(\psi_k)\subset C_b(\Rd)$ is a bounded sequence, converging locally uniformly on $\Rd$ to some
function $\psi$, then $T(\cdot)\psi_n$ converges to $T(\cdot)\psi$ uniformly in $[0,+\infty)\times B_R$ for every $R>0$.
Indeed, if $t>0$, then we can estimate
\begin{align*}
|(T(t)(\psi_k\!-\!\psi))(x)|\le &\int_{B_r}|\psi_k(y)\!-\!\psi(y)|p(t,x,dy)\!+\!\int_{\Rd\setminus B_r}|\psi_k(y)\!-\!\psi(y)|p(t,x,dy)\\
\le &\|\psi_k\!-\!\psi\|_{C_b(B_r)}+2\sup_{k\in\N}\|\psi_k\|_{\infty}\sup_{t>0}\sup_{x\in B_R}p(t,x,\Rd\setminus B_r)
\end{align*}
for every $r,R,t>0$, $k\in\N$ and $x\in\Rd$.
Letting $k$ tend  to $+\infty$ we obtain that
\begin{eqnarray*}
\limsup_{k\to +\infty}\|T(t)(\psi_k-\psi)\|_{C_b((0,+\infty)\times B_R)}\le 2\sup_{k\in\N}\|\psi_k\|_{\infty}\sup_{t>0}\sup_{x\in B_R}p(t,x,\Rd\setminus B_r)
\end{eqnarray*}
for every $r,R>0$. Finally, letting $r$ tend to $+\infty$, we conclude that
\begin{eqnarray*}
\limsup_{k\to +\infty}\|T(t)(\psi_n-\psi)\|_{C_b((0,+\infty)\times B_R)}\le 0
\end{eqnarray*}
and we are done.

Coming back to the sequence $(\T(t)\f_{n_k})$, we observe that, by \eqref{vector-scalar}, we can estimate
$|\T(t)\f_{n_k}-\T(t-t_0)\g|^2\le T(t-t_0)(|\T(t_0)\f_{n_k}-\g|^2)$
in $\Rd$ for every $t>t_0$ and $k\in\N$. The above result, with $\psi_k=
|\T(t_0)\f_{n_k}-\g|$, yields immediately the assertion.

The same arguments can be used to prove the last part of the assertion.
\end{proof}

\begin{prop}\label{posi}
Under Hypotheses $\ref{hyp-base}(i)$-$(v)$, the semigroup $\T(t)$ is positive, in the sense that,
if $\f\in C_b(\Rd;\Rm)$ has all nonnegative components, then the function $\T(t)\f$ has
nonnegative components as well, for any $t>0$. If in addition $f_k$ does not identically vanish in $\Rd$ for some $k=1,\ldots,m$,
then $(\T(t)\f)_k>0$ in $\Rd$ for any $t>0$.
Consequently, the measures $p_{ij}(t,x,dy)$ $(i\neq j=1,\dots, m)$ in \eqref{for_rep_1} are nonnegative
and the measures $p_{ii}(t,x,dy)$ $(i=1,\ldots,m)$ are positive for any $t>0$ and $x \in \Rd$.
\end{prop}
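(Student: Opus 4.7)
The plan is to approximate $\T(t)\f$ by the Cauchy-Dirichlet solutions $\uu_n$ in \eqref{prob_approx_Dir} and apply a componentwise vectorial maximum principle on each ball $B_n$ that exploits the nonnegativity of the off-diagonal entries of $C$. Fix $T>0$, $n\in\N$, and assume $\f$ has nonnegative components. Choose $\lambda$ strictly larger than $\max_{i=1,\ldots,m}\sup_{\overline B_n}\sum_{j=1}^m c_{ij}$ (possible by continuity of the $c_{ij}$) and set $\ww_n:=e^{-\lambda t}\uu_n$. Suppose, by contradiction, that $\mu:=\min\{w_{n,i}(t,x):i=1,\ldots,m,\ (t,x)\in[0,T]\times\overline B_n\}$ is negative. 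Since $\ww_n$ is nonnegative on the parabolic boundary, the minimum is attained at an interior point $(t_0,x_0)\in(0,T]\times B_n$ for some index $i_0$, where $D_tw_{n,i_0}-\mathcal Lw_{n,i_0}\le 0$. The identity
\begin{equation*}
D_tw_{n,i_0}-\mathcal Lw_{n,i_0}=(c_{i_0 i_0}-\lambda)w_{n,i_0}+\sum_{j\ne i_0}c_{i_0 j}w_{n,j},
\end{equation*}
combined with $w_{n,j}(t_0,x_0)\ge\mu$ for every $j$ and $c_{i_0 j}(x_0)\ge 0$ for $j\ne i_0$, forces the right-hand side at $(t_0,x_0)$ to be at least $\big(\sum_{j=1}^mc_{i_0 j}(x_0)-\lambda\big)\mu>0$, a contradiction. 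Hence $\uu_n\ge 0$ componentwise, and passing to the limit in $C^{1,2}_{\mathrm{loc}}$ via the convergence recalled after \eqref{prob_approx_Neu} yields the nonnegativity of $\T(t)\f$.

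For the strict positivity when $f_k\not\equiv 0$, set $u:=(\T(t)\f)_k$ and observe that by the already established componentwise nonnegativity of $\T(t)\f$ together with Hypothesis \ref{hyp-base}(v),
\begin{equation*}
D_tu-\mathcal Lu-c_{kk}u=\sum_{j\ne k}c_{kj}(\T(t)\f)_j\ge 0.
\end{equation*}
By Hypothesis \ref{hyp-base}(iii), $c_{kk}=\langle Ce_k,e_k\rangle\le 0$, so the scalar operator $\mathcal L+c_{kk}$ fits into the framework of Remark \ref{rmk-2.5}. A scalar parabolic comparison applied at the level of the approximating Cauchy-Dirichlet problems on $B_n$ shows that $u_{n,k}$ is a supersolution of the scalar problem for $\mathcal L+c_{kk}$ with datum $f_k$ and zero boundary data, hence dominates the corresponding solution $v_n$; passing to the limit gives $u\ge v$, where $v$ is the minimal nonnegative solution to the scalar Cauchy problem for $D_tv=(\mathcal L+c_{kk})v$ with datum $f_k$. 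Remark \ref{rmk-2.5} guarantees $v>0$ in $(0,+\infty)\times\Rd$, whence so is $u$.

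The assertions on the measures follow directly from the representation \eqref{for_rep_1}. Applying it to $\f=ge_j$ with $g\in C_b(\Rd)$ nonnegative yields $\int_{\Rd}g\,dp_{ij}(t,x,\cdot)=(\T(t)\f)_i(x)\ge 0$ for every such $g$ and arbitrary $i,j$, so each $p_{ij}(t,x,\cdot)$ is a nonnegative measure; taking instead $j=i$ and $g$ continuous, nonnegative and not identically zero, the strict positivity proved above gives $\int_{\Rd}g\,dp_{ii}(t,x,\cdot)>0$, so $p_{ii}(t,x,\cdot)$ is positive. The main obstacle is the strict positivity step, as it requires reducing to a scalar equation with a (possibly unbounded) potential $c_{kk}$; the sign information $c_{kk}\le 0$ provided by Hypothesis \ref{hyp-base}(iii) is exactly what makes the scalar theory of Remark \ref{rmk-2.5} — existence of a minimal nonnegative solution and its strict positivity — applicable.
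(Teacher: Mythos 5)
Your proof is correct, and the key step is handled by a genuinely different (and more self-contained) argument than the paper's. For the componentwise nonnegativity, the paper truncates the \emph{coefficients} via a map $\Psi_n$ to land in the bounded-coefficient setting, where it invokes Otsuka's positivity theorem from \cite{Ots88Ont}, and then passes to the limit; since the truncated operators $\A_n$ are not the Dirichlet approximations, the paper must work to show the limit attains the initial datum (the cut-off and variation-of-constants argument). You instead work with the Cauchy--Dirichlet approximations $\uu_n$ on $B_n$ -- whose convergence to $\T(\cdot)\f$ in $C^{1,2}_{\rm loc}((0,+\infty)\times\Rd;\Rm)$ is already part of the preliminaries in Section~\ref{sect:2} -- and prove their nonnegativity directly via an elementary componentwise parabolic maximum principle on the bounded cylinder, exploiting $c_{ij}\ge 0$ for $i\ne j$: at a putative negative interior minimum of $\ww_n=e^{-\lambda t}\uu_n$ in component $i_0$, you get $D_tw_{n,i_0}-\mathcal{A}w_{n,i_0}\ge \bigl(\sum_{j}c_{i_0j}(x_0)-\lambda\bigr)\mu>0$, contradicting the sign forced by the minimality; this is sound. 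The payoff is that your proof avoids both the external citation and the auxiliary argument about the initial datum, at the minor cost of having to verify the maximum principle by hand. The strict-positivity step (comparison with the scalar semigroup for $\mathcal{A}+c_{kk}$ via Remark~\ref{rmk-2.5}) and the passage to the measures $p_{ij}(t,x,dy)$ agree in substance with the paper's reasoning; just note that your $\mathcal{L}$ should be the paper's $\mathcal{A}={\rm Tr}(QD^2)+\langle{\bf b},\nabla\rangle$, and that a finite Borel measure absolutely continuous with respect to Lebesgue is nonnegative once it integrates every nonnegative $g\in C_b(\Rd)$ to a nonnegative number, which is the (correct) principle you are implicitly using.
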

\begin{proof}
To prove the first statement, we take advantage of a result in \cite{Ots88Ont}, which deals with the positivity of the semigroup in the case of bounded coefficients.
For this purpose, for any $n \in \N$, we introduce a smooth function $\psi_n:\R\to \R$ such that
\begin{eqnarray*}
\psi_n(x)=\left\{\begin{array}{ll}
x \qquad\;\, &|x|\le n,\\
n+1\qquad\;\, &x \ge n+1,\\
-n-1\qquad\;\, &x \le -n-1,
\end{array}\right.
\end{eqnarray*}
and set $\Psi_n(x)=(\psi_n(x_1),\ldots, \psi_n(x_d))$ for any $x\in\Rd$.
Let $\A_n$ be the elliptic operator defined as the operator $\A$ in \eqref{picco} with $q_{ij}, b_i$
and $C$ being replaced by $q_{ij,n}= q_{ij}\circ\Psi_n$, $b_{i,n}= b_i\circ\Psi_n$
and $C_n$ with entries $c_{hk,n}= c_{hk}\circ\Psi_n$, ($i,j=1,\ldots,d$, $h,k=1, \ldots, m$). Clearly, $\langle C_n(x)y,y\rangle$ is nonpositive  for any $x\in\Rd$ and $y\in\Rm$; thus, again \cite{DelLor11OnA} shows that, for any $\f\in C_b(\Rd;\Rm)$, the Cauchy
problem \eqref{pioggia} with $\A$ being replaced by $\A_n$ admits a unique classical solution $\uu_n$ which is bounded in $[0,+\infty)\times \Rd$.
Denote by $\T_n(t)$ the semigroup of contractions associated in $C_b(\Rd;\Rm)$ to the operator $\A_n$.
Since the off-diagonal entries of $C_n$ are nonnegative, \cite[Theorem 1.2]{Ots88Ont} implies that, if all the components of $\f$ are nonnegative (as we assume from now on),
then the components of $\uu_n$ are all nonnegative as well.
The interior Schauder estimates in Theorem \ref{thm-A2}, Arzel\`a-Ascoli theorem and a diagonal argument imply that there exists a subsequence $(\uu_{n_k})$ which converges to a function $\vv$ in $C^{1,2}([\varepsilon, T]\times K;\Rm)$ for any $0<\varepsilon<T$ and any compact set $K\subset\Rd$. Clearly, $\vv\in C^{1+\alpha/2,2+\alpha}_{\rm loc}((0,+\infty)\times\Rd;\Rm)$, satisfies the differential equation in \eqref{pioggia} and its components are all nonnegative in $(0,+\infty)\times\Rd$. We claim
 that $\vv$ can be extended by continuity to $\{0\}\times \Rd$, by setting $\vv(0,\cdot)=\f$. For this purpose,
we fix $R>0$ and let $\vartheta$ be a smooth cut-off function such that $\chi_{B_{R-1}}\le \vartheta \le \chi_{B_R}$. The function $\vv_k:=\vartheta \uu_{n_k}$ is bounded and continuous, and for $n_k> R$ it solves the Cauchy problem
 \begin{equation*}
\left\{
\begin{array}{ll}
D_t\vv_k=\A\vv_k+\g_k, & {\rm in}~(0,T]\times B_R,\\[1mm]
\vv_k(t,x)=0, & {\rm in}~ (0,T]\times \partial B_R,\\[1mm]
\vv_k(0,\cdot)=\vartheta\f, & {\rm in}~ B_R,
\end{array}
\right.
\end{equation*}
 where $\g_k= -{\rm Tr}(QD^2\vartheta)\uu_{n_k}-2(J_x\uu_{n_k})Q\nabla\vartheta-\uu_{n_k}\langle {\bf b},\nabla \vartheta\rangle$. Note that $\g_k$ is continuous in $(0, T)\times B_R$ and $\sqrt{t}\|\g_k(t,\cdot)\|_\infty\le C\|\f\|_\infty$ for every $t \in (0,1]$ and some positive constant $C$ independent of $k$ (see Proposition \ref{thm-A2}). Moreover,
 by the variation-of-constants-formula, we can write
\begin{eqnarray*}
\vv_k(t,x)=(\T_R(t)(\vartheta\f))(x)+\int_0^t(\T_R(t-s)\g_k(s,\cdot))(x)ds,\qquad\;\,t\in (0,T],\;\,x\in B_R,
\end{eqnarray*}
where $\T_R(t)$ is the semigroup generated by the realization of $\A$ in $C_b(B_R;\Rm)$ with homogeneous Dirichlet boundary conditions.
Since $\vv_k \equiv \uu_{n_k}$ in $[0,+\infty)\times B_{R-1}$ we deduce that
\begin{eqnarray*}
|\uu_{n_k}(t,x)-\f(x)|\le |(\T_R(t)(\vartheta\f))(x)-\f(x)|+c\sqrt{t}\|\f\|_\infty, \qquad\;\,t \in (0,1),\;\, x \in B_{R-1}.
\end{eqnarray*}
Letting first $k$ tend to $+\infty$ and, then, $t$ tend to $0^+$ we conclude that $\vv$ can be extended by continuity on $\{0\}\times B_{R-1}$ by setting $\vv(0,\cdot)=\f$.
By the arbitrariness of $R$ we conclude that $\vv$ can be extended by continuity to $[0,+\infty)\times \Rd$ by setting $\vv(0,\cdot)=\f$ in $\Rd$.
Hence, $\vv$ is a bounded classical solution to the Cauchy problem \eqref{pioggia}. By the uniqueness of the solution to this problem, it follows that $\vv \equiv\T(\cdot)\f$.
Thus, we conclude that all the components of $\T(t)\f$ are nonnegative.

Let us now suppose that $f_k\ge 0$ does not identically vanish in $\Rd$ for some $k$.
From Hypotheses \ref{hyp-base}(v) and the first part of the proof
it follows that $D_t(\T(\cdot)\f)_k\ge {\mathcal A}_k (\T(\cdot)\f)_k$ in $(0,+\infty)\times\Rd$, where
$\mathcal{A}_k= {\rm Tr}(Q D^2)+\langle b,\nabla \rangle+c_{kk}$.
Since $c_{kk}\le 0$, Hypothesis \ref{hyp-base}(iv) yields ${\mathcal A}_k\varphi\le a-c\varphi$ in $\Rd$.
As a byproduct, a generalized version of the classical maximum principle implies that
$(\T(\cdot)\f)_k\ge S_k(\cdot)f_k$ in $(0,+\infty)\times\Rd$,
where $S_k(t)$ is the semigroup associated to the realization of $\mathcal{A}_k$ in $C_b(\Rd)$ (see \cite[Chapter 3]{newbook}).
By Remark \ref{rmk-2.5}, $S_k(t)f_k$ is strictly positive in $\Rd$ for any $t>0$ so that
$(\T(\cdot)\f)_k$ is positive in $(0,+\infty)\times\Rd$.

Finally, let $A$ be any Borel subset of $\Rd$ and fix $h\in\{1,\ldots,m\}$. Then, the function
$\f=\chi_Ae_h$ belongs to $B_b(\Rd;\Rm)$ and there exists a bounded sequence $(\f_n)\subset C_b(\Rd;\Rm)$ which converges to
$\f$ pointwise almost everywhere with respect to the Lebesgue measure and, hence, with respect to each measure $p_{ij}(t,x,dy)$ ($i,j=1,\ldots,m$).
Moreover, without loss of generality, we can assume that $f_{n,i}\ge 0$ for any $n\in\N$ and $i=1,\ldots,m$. Combining the above facts we conclude that
\begin{eqnarray*}
p_{ih}(t,x,A)=(\T(t)\f(x))_i=\lim_{n\to +\infty}(\T(t)\f_n(x))_i\ge 0
\end{eqnarray*}
for any $t>0$, $x\in\Rd$, $i=1,\ldots,m$. Hence, the measure $p_{ih}(t,x,dy)$ is nonnegative for any $i=1,\ldots,m$. Taking $A=\R^d$, we also deduce that
$p_{hh}(t,x,\Rd)>0$ and the arbitrariness of $h$ allows us to conclude.
\end{proof}

\section{Systems of invariant measures}
\label{sect:3}
\begin{defi}
A family of positive and finite Borel measures $\{\mu_i: i=1,\ldots,m\}$ over $\Rd$ is a system of invariant measures for the semigroup $\T(t)$, if
for any $\f\in C_b(\Rd;\Rm)$ it holds that
\begin{align}
\sum_{i=1}^m\int_{\Rd}(\T(t)\f)_id\mu_i=\sum_{i=1}^m\int_{\Rd}f_id\mu_i.
\label{caricatore}
\end{align}
\end{defi}

To begin with, we characterize the set of all the fixed point of the semigroup $\T(t)$, i.e., the set
\begin{equation*}
{\mathcal E}=\{\f\in C_b(\Rd;\Rm):\T(t)\f\equiv \f\textrm{ for any }t\geq0\}.
\end{equation*}
Hypothesis \ref{hyp-base}(vi) yields that $\mathcal E$ is not empty since it contains the function $\f_0\equiv \xi$.
Hypothesis \ref{hyp-base}(vii) simply requires that $m$ is the \emph{minimum coupling order} in the sense that the system \eqref{pioggia} does not contain any lower order system that decouples. Such assumption, which is not restrictive, allows us to prove some properties of $\mathcal E$ which yield to assume that $\xi$ has all positive components and to deduce, as a consequence, that ${\mathcal E}$ is a one dimensional vector space spanned by the function $\f_0$.

\begin{prop}
${\mathcal E}$ is a one-dimensional vector space of constant functions spanned by the vector $\xi$.
\label{laurea}
\end{prop}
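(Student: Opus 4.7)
The plan is to prove three statements in sequence: (i) every $\f\in\mathcal{E}$ is a constant vector; (ii) the constants lying in $\mathcal{E}$ are exactly the $v\in\bigcap_{x\in\R^d}\ker C(x)$; (iii) this intersection is one-dimensional, and up to replacing $\xi$ it is spanned by a vector with strictly positive components.

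For (i), fix $\f\in\mathcal{E}$. The identity $\T(t)\f=\f$ together with the interior regularity in Theorem~\ref{thm-A2} gives $\f\in C^{2+\alpha}_{\rm loc}(\R^d;\R^m)$ and $\A\f=0$ classically. Using $\mathcal{A}f_k=-(C\f)_k$, a direct computation yields
\begin{equation*}
\mathcal{A}|\f|^2=2\sum_{k=1}^m\langle Q\nabla f_k,\nabla f_k\rangle-2\langle C\f,\f\rangle\ge 0
\end{equation*}
by Hypotheses~\ref{hyp-base}(ii)--(iii). The bounded function $w(t,x):=(T(t)|\f|^2)(x)-|\f|^2(x)$ vanishes at $t=0$ and satisfies $\partial_tw-\mathcal{A}w=\mathcal{A}|\f|^2\ge 0$, so by the scalar maximum principle (valid under Hypothesis~\ref{hyp-base}(iv)) one obtains $T(t)|\f|^2\ge|\f|^2$. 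Combining this with the ergodicity $T(t)|\f|^2\to\int_{\R^d}|\f|^2\,d\mu$ recalled in the introduction yields $|\f|^2\le\int_{\R^d}|\f|^2\,d\mu$ pointwise; integrating against the probability $\mu$ forces equality $\mu$-a.e., hence everywhere by continuity and by the strict positivity of the density of $\mu$ (Theorem~\ref{thm-valle}). So $|\f|^2$ is a constant, $\mathcal{A}|\f|^2\equiv 0$, and since the two nonnegative summands must vanish separately, the uniform ellipticity $Q\ge\mu_0 I$ gives $\nabla f_k\equiv 0$ for every $k$. Statement (ii) is immediate: for a constant $\f\equiv v$, $\A\f=0$ reads $C(x)v=0$ for every $x$, and the converse follows from uniqueness of the Cauchy problem.

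For (iii) I would first upgrade the abstract $\xi$ of Hypothesis~\ref{hyp-base}(vi) to a vector with all positive components. Decompose $\xi=\xi^+-\xi^-$ into componentwise positive and negative parts. By Proposition~\ref{posi} and linearity, $r(t,x):=(\T(t)\xi^+)(x)-\xi^+=(\T(t)\xi^-)(x)-\xi^-$ has nonnegative components at every $(t,x)$: indeed, at indices where $\xi_k^+$ vanishes the inequality $\T(t)\xi^+\ge 0$ gives $r_k\ge 0$, and analogously for $\xi^-$. Then \eqref{vector-scalar} with $p=2$ applied to $\xi^+$ gives
\begin{equation*}
\sum_k\bigl(\xi_k^++r_k(t,x)\bigr)^2\le T(t)|\xi^+|^2=|\xi^+|^2,
\end{equation*}
the last equality since $|\xi^+|^2$ is a scalar constant and $T(t)$ is Markov. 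Expanding, $\sum_k[2\xi_k^+r_k+r_k^2]\le 0$ with each summand nonnegative, whence $r\equiv 0$. So $\xi^\pm\in\mathcal{E}$, and replacing $\xi$ by whichever of them is nonzero we may assume $\xi\ge 0$ componentwise. If $\xi_k=0$ for some $k$, put $K:=\{i:\xi_i=0\}$ (nonempty and proper); for $i\in K$ the identity $(C(x)\xi)_i=\sum_{j\notin K}c_{ij}(x)\xi_j=0$ together with $c_{ij}(x)\ge 0$ (Hypothesis~\ref{hyp-base}(v)) and $\xi_j>0$ for $j\notin K$ forces $c_{ij}\equiv 0$ for all $i\in K$, $j\notin K$, contradicting Hypothesis~\ref{hyp-base}(vii). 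Hence $\xi>0$ everywhere.

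For the final one-dimensionality, take any $\f\equiv v\in\mathcal{E}$ and set $\lambda:=\min_i v_i/\xi_i$, attained at some $i^\ast$. The constant $u:=v-\lambda\xi\in\mathcal{E}$ is componentwise nonnegative and satisfies $u_{i^\ast}=0$; reapplying the argument just used for $\xi$ to $u$ shows that $u\ne 0$ would produce a set $K'=\{i:u_i=0\}$ violating Hypothesis~\ref{hyp-base}(vii). Therefore $u=0$, $v=\lambda\xi$, and $\mathcal{E}=\R\xi$. The main obstacle is step (i): promoting the algebraic identity $\T(t)\f=\f$ to genuine constancy of $\f$. The clean trick is that $|\f|^2$ is a bounded $\mathcal{A}$-subharmonic function, so its scalar orbit both dominates $|\f|^2$ and converges to the $\mu$-mean, and uniform ellipticity then rigidifies $\f$; the remaining algebraic work on $C$ is a routine consequence of Hypotheses~\ref{hyp-base}(v)--(vii).
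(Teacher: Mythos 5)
Your proof is correct and follows the same three-step skeleton as the paper: show that fixed points of $\T(t)$ are constant vectors in $\bigcap_{x}{\rm Ker}\,C(x)$, show that such a vector can be taken componentwise nonnegative, and conclude via Hypothesis~\ref{hyp-base}(vii) that the intersection of kernels is one-dimensional and spanned by a strictly positive vector. The differences are at the level of implementation. In your step (i), to conclude that $|\f|^2$ is constant you invoke pointwise ergodicity $T(t)|\f|^2\to\int|\f|^2\,d\mu$; this is more than is needed. The paper simply integrates the pointwise inequality $T(t)|\f|^2\ge|\f|^2$ (which it obtains directly from \eqref{vector-scalar} rather than via your maximum-principle detour through $\mathcal A|\f|^2\ge 0$) against the invariant $\mu$, and the identity $\int\bigl(T(t)|\f|^2-|\f|^2\bigr)d\mu=0$ forces equality $\mu$-a.e.\ and hence everywhere, using only invariance. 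Once $\mathcal{A}|\f|^2=0$ is in hand, your direct decomposition $\mathcal{A}|\f|^2=2\sum_k\langle Q\nabla f_k,\nabla f_k\rangle-2\langle C\f,\f\rangle$ into two nonnegative summands is, if anything, cleaner than the paper's time-derivative computation of $D_t|\T(\cdot)\f|^2$, and rigidifies $\f$ for the same reason. In step (iii), the paper passes from $\eta$ to $(|\eta_1|,\ldots,|\eta_m|)$ using the positivity of $\T(t)$ together with \eqref{vector-scalar}; your decomposition $\xi=\xi^+-\xi^-$ together with the observation that $r:=\T(t)\xi^+-\xi^+=\T(t)\xi^--\xi^-\ge 0$ and $\sum_k(2\xi_k^+r_k+r_k^2)\le0$ is a slightly different route to the same fact, and is correct. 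Finally, your explicit subtraction $u=v-\lambda\xi$ with $\lambda=\min_i v_i/\xi_i$ is a concrete variant of the paper's closing remark that a subspace of $\R^m$ consisting of vectors with all entries of one sign is necessarily one-dimensional. Both approaches work; yours substitutes a heavier asymptotic tool in (i) for a lighter invariance argument, and a more hands-on one-dimensionality argument at the end.
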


\begin{proof}
We split the proof into three steps.

{\em Step 1}. Here, we prove that $\f$ belongs to ${\mathcal E}$ if and only if $\f\equiv\eta$ for some $\eta\in\bigcap_{x\in\Rd}{\rm Ker}(C(x))$.

It is straightforward to check that, if $\f\equiv\eta$ for some $\eta\in\bigcap_{x\in\Rd}{\rm Ker}(C(x))$, then $\f$ belongs to ${\mathcal E}$.
Vice versa, let $\f$ belong to ${\mathcal E}$. From \eqref{vector-scalar} it follows that
$|\f|^2=|\T(t)\f|^2\leq T(t)|\f|^2$ in $\Rd$ for any $t\in (0,+\infty)$.
The above inequality and the invariance property of $\mu$, which yields
\begin{eqnarray*}
\int_{\Rd}(T(t)|\f|^2-|\f|^2)d\mu=0,\qquad\;\,t>0,
\end{eqnarray*}
imply that, for any $t>0$, $T(t)|\f|^2=|\f|^2$ $\mu$- almost everywhere.  Since $\mu$ is equivalent to the Lebesgue measure and the functions $|\f|^2$ and $T(t)|\f|^2$ are continuous in $\Rd$, we deduce that $|\T(t)\f|^2=T(t)|\f|^2=|\f|^2$ in $\Rd$ for any $t>0$.
Based on this result, we can now prove that $\f$ is constant. To this aim, we observe that the equality $\T(\cdot)\f=\f$ implies that $\f\in C^2(\Rd;\R^m)$ and, consequently, $|\f|^2\in C^2(\Rd)$.
Moreover, since $T(\cdot)|\f|^2$ is independent of $t$ and solves the equation $D_tu={\mathcal A}u$ in $(0,+\infty)\times\Rd$, it follows that $0=
{\mathcal A}T(\cdot)|\f|^2={\mathcal A}|\f|^2={\mathcal A}|\T(t)\f|^2$.
Using this fact and Hypothesis \ref{hyp-base}(iii) we deduce that
\begin{align*}
0& = D_t|\f|^2= D_t|\T(\cdot)\f|^2
= 2\langle \T(\cdot)\f,D_t\T(\cdot)\f\rangle \\
& = {\mathcal A}|\T(\cdot)\f|^2-2\sum_{i=1}^m|Q^{\frac{1}{2}}\nabla_x(\T(\cdot)\f)_i|^2
+2\langle C\T(\cdot)\f,\T(\cdot)\f\rangle \\
& \le -2\sum_{i=1}^m|Q^{\frac{1}{2}}\nabla_x(\T(\cdot)\f)_i|^2.
\end{align*}
Thus, taking Hypothesis \ref{hyp-base}(ii) into account we immediately get $J_x\f= J_x \T(t)\f=0$ for any $t>0$. Hence, $\f\equiv\eta$ for some $\eta\in\Rm$. Clearly,
$C(x)\eta=0$ for any $x\in\Rd$ so that $\eta\in\bigcap_{x\in\Rd}{\rm Ker}(C(x))$.

{\em Step 2}. Here, we prove that, if $\eta=(\eta_1,\ldots,\eta_m)$ belongs to $\bigcap_{x\in\Rd}{\rm Ker}(C(x))$, then
$\widehat\eta:=(|\eta_1|,\ldots,|\eta_m|)$ belongs to $\bigcap_{x\in\Rd}{\rm Ker}(C(x))$ as well.

To this aim, let $\g\equiv\eta$, $\widehat \g\equiv\widehat\eta$
and assume that $\eta\in\bigcap_{x\in\Rd}{\rm Ker}(C(x))$. By Step 1, $\g$ belongs to $\mathcal E$. Moreover, since $\T(\cdot)$ preserves positivity,
$|(\T(\cdot)\g)_j|\le (\T(\cdot)\widehat\g)_j$ in $[0,+\infty)\times\Rd$ for any $j=1,\ldots,m$.
Hence, taking \eqref{vector-scalar} into account, we get
\begin{align*}
|\widehat\g|^2=|\g|^2=|\T(t)\g|^2\leq |\T(t)\widehat\g|^2\le T(t)|\widehat\g|^2=|\widehat\g|^2,\qquad\;\,t>0,
\end{align*}
and, consequently, $|\T(\cdot)\widehat\g|^2=|\widehat\g|^2$. The same argument as in Step 1 implies that $\widehat\g\in{\mathcal E}$.
%Indeed, as we have already observed, $\widehat g_j=|g_j|=|(\T(\cdot)\g)_j|\le (\T(\cdot)\widehat\g)_j$ for any
%$j=1,\ldots,m$. If it there existed $k\in\{1,\ldots,m\}$
%such that $\widehat g_k< ((\T(\cdot)\widehat\g)(t_0,x_0))_k$ at some points $(t_0,x_0)\in (0,+\infty)\times \Rd$, then we would get $|\widehat\g|<|\T(t)\widehat\g|$ which leads to a contradiction.

{\em Step 3.} Now, we complete the proof. We claim that
the entries of any vector $\eta\in \Rm\setminus\{0\}$ belonging to $\bigcap_{x\in\Rd}{\rm Ker}(C(x))$ are all positive or all negative.
Fix any such vector $\eta$. In view of Step 2, the vector $\widehat\eta$ has all the components which are nonnegative.
By contradiction, assume that there exists $K\subset\{1,\ldots, m\}$ with $1\le |K|<m$ such that $\eta_i=0$ for any $i\in K$. Since $C(x)\widehat\eta=0$ for any $x\in\Rd$, it follows that
$\sum_{j\notin K}c_{ij}(x)|\eta_j|=0$ for any $i=1, \ldots, m$ and $x\in \Rd$. In particular, choosing $i \in K$ and recalling that the off-diagonal entries of the matrix $C(x)$
are nonnegative for any $x\in\Rd$, we conclude that
$c_{ij}\equiv 0$ for any $i\in K$ and $j\notin K$ contradicting Hypothesis \ref{hyp-base}(vii).
Now, we are almost done. Up to replacing $\eta$ with $-\eta$, we can assume that $\eta_1>0$. Then, all the other components are positive as well.
Indeed, if this were not the case, the nontrivial vector $\eta+\widehat\eta$, which belongs to $\bigcap_{x\in\Rd}{\rm Ker}(C(x))$,
would have at least one trivial component, which can not be the case.
It is straightforward to check that, if a subspace of $\Rm$ consists of vectors
whose entries are all positive or negative, then it is one-dimensional.
\end{proof}

\begin{rmk}
\label{rmk-102}
{\rm In view of Steps 1 and 2 in the proof of Proposition \ref{laurea} in the rest of this paper, we assume that all the entries of the vector $\xi$ are positive
and $|\xi|=1$.}
\end{rmk}

\begin{rmk}
{\rm In the particular case when the matrix $C(x)$ is irreducible for some $x\in\Rd$, the proof of
Proposition \ref{laurea} can be considerably simplified. Indeed, the Perron-Frobenius Theorem applied to the matrix $C(x)+\lambda I$, where
$\lambda$ is any real number greater than the maximum of the moduli of the negative eigenvalues of $C(x)$ and the moduli of the elements $c_{ii}(x)$ ($i=1,\ldots,m$),
shows that the kernel of $C(x)$ is one-dimensional and spanned by a vector $\xi$ whose components are all positive.

However, we stress that our assumptions on $C$ in general do not ensure that $C(x)$ is irreducible for some $x\in\Rd$.
For instance, suppose that
\begin{align*}
C(x)=
\begin{pmatrix}
-f(x)-h(x) & f(x) & h(x) \\
f(x) & -f(x)-g(x) & g(x) \\
h(x) & g(x) & -g(x)-h(x)
\end{pmatrix},
\end{align*}
for any $x\in\Rd$ and some smooth, nonnegative and nontrivial functions $f,g,h:\Rd\to\R$ compactly supported, respectively, in
$B_1$, $3e_1+B_1$ and $6e_1+B_1$.
It is easy to show that $C$ satisfies Hypotheses \ref{hyp-base}(iii), (v)-(vii) but it is
irreducible for no values of $x$, since for any $x\in\Rd$ at least one row of $C(x)$
vanishes.}
\end{rmk}

Now, we prove that our standing assumptions guarantee the existence of systems of invariant measures for $\T(t)$.

\begin{thm}\label{exi_inv_meas}
There exist infinitely many  systems of invariant measures for the semigroup $\T(t)$.
More precisely, if $\{\mu_j: j=1,\ldots,m\}$ is a system of invariant measures for $\T(t)$, then there exists a positive
constant $c$ such that $\mu_j=c\xi_j\mu$ for any $j=1,\ldots,m$, where $\mu$ is the invariant measure of the semigroup $T(t)$.
\end{thm}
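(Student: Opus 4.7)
The plan is to establish existence by direct construction and to prove uniqueness through a Ces\`aro averaging argument based on the characterization of ${\mathcal E}$ in Proposition \ref{laurea}. For existence, the starting point is the identity
\[
\langle\xi,\T(t)\f\rangle = T(t)\langle\xi,\f\rangle,\qquad t\ge 0,\;\f\in C_b(\Rd;\Rm),
\]
which holds because the scalar function $v(t,x):=\langle\xi,(\T(t)\f)(x)\rangle$ solves $D_tv={\mathcal A}v$: by Lemma \ref{carusorompi} we have $C^*\xi\equiv 0$, so the pointwise identity $\langle\xi,\A\uu\rangle={\mathcal A}\langle\xi,\uu\rangle+\langle C^*\xi,\uu\rangle$ reduces to ${\mathcal A}\langle\xi,\uu\rangle$, and uniqueness of the bounded classical solution in the scalar case forces $v(t,\cdot)=T(t)\langle\xi,\f\rangle$. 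Choosing $\mu_j:=\xi_j\mu$, the invariance of $\mu$ for $T(t)$ immediately yields \eqref{caricatore}, and any positive multiple $c\xi_j\mu$ works as well, producing infinitely many systems.

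Now let $\{\mu_j\}$ be an arbitrary system of invariant measures. Applying \eqref{caricatore} to $\f=f\xi$ with scalar $f\in C_b(\Rd)$ and invoking the identity above reduces the defining relation to $\int_{\Rd}T(t)f\,d\nu=\int_{\Rd}f\,d\nu$, where $\nu:=\sum_i\xi_i\mu_i$ is a positive finite Borel measure. Hence $\nu$ is invariant for $T(t)$, so by Theorem \ref{thm-valle} there exists $c\ge 0$ with $\nu=c\mu$; since $c=0$ would force all $\mu_i$ to vanish, we assume $c>0$.

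To disentangle the individual $\mu_j$, introduce the Ces\`aro means $M_T\f:=T^{-1}\int_0^T\T(t)\f\,dt$ and the linear functional $\ell(\f):=\sum_i\int_{\Rd}f_i\,d\mu_i$. Fubini and the invariance of $\ell$ under $\T(t)$ give $\ell(M_T\f)=\ell(\f)$ for every $T>0$, while a telescoping estimate yields $\|\T(s)M_T\f-M_T\f\|_\infty\le 2s\|\f\|_\infty/T$. Proposition \ref{prop-rossana}, applied to the bounded sequence $(M_{T_n}\f)$ for an arbitrary $T_n\to +\infty$, extracts a subsequence along which $\T(t_0)M_{T_{n_k}}\f$ converges locally uniformly; the telescoping estimate propagates this to locally uniform convergence $M_{T_{n_k}}\f\to \g$ in $C_b(\Rd;\Rm)$, and forces $\T(s)\g=\g$ for every $s>0$. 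Thus $\g\in{\mathcal E}$, and Proposition \ref{laurea} yields $\g=\alpha\xi$ for some scalar $\alpha$. Bounded convergence against the finite measures $\mu_i$ gives $\alpha\,\ell(\xi)=\ell(\g)=\ell(\f)$; since $\ell(\xi)=\sum_i\xi_i\mu_i(\Rd)=c$, one has $\alpha=\ell(\f)/c$, a value independent of the chosen subsequence, so $M_T\f\to(\ell(\f)/c)\xi$ locally uniformly as $T\to+\infty$.

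The limit $(\ell(\f)/c)\xi$ depends only on the semigroup, not on the system $\{\mu_j\}$. Repeating the argument for the canonical system $\{\xi_j\mu\}$ (for which the analogue of $c$ equals $\sum_j\xi_j^2=|\xi|^2=1$ by Remark \ref{rmk-102}) identifies $\alpha=\sum_j\xi_j\int_{\Rd}f_j\,d\mu$. Equating the two expressions yields $\ell(\f)=c\sum_j\xi_j\int_{\Rd}f_j\,d\mu$ for every $\f\in C_b(\Rd;\Rm)$, and specializing to $\f=fe_h$ gives $\int_{\Rd}f\,d\mu_h=c\xi_h\int_{\Rd}f\,d\mu$ for every $f\in C_b(\Rd)$, so $\mu_h=c\xi_h\mu$. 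The main obstacle is the ergodic step: carefully identifying the Ces\`aro subsequential limits as elements of ${\mathcal E}$ by combining Proposition \ref{prop-rossana} with the telescoping estimate, justifying the bounded convergence step against the possibly non-tight finite measures $\mu_i$, and thereby promoting subsequential to full convergence of $M_T\f$.
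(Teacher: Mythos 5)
Your proof is correct, and it follows the same broad strategy as the paper (existence via $\nu_j=\xi_j\mu$, uniqueness via Ces\`aro averaging plus the local compactness of Proposition \ref{prop-rossana} and the fixed-point characterization of Proposition \ref{laurea}), but you make two genuinely streamlining choices. First, you work directly with the continuous Ces\`aro means $M_T\f$ and a telescoping estimate $\|\T(s)M_T\f-M_T\f\|_\infty\le 2s\|\f\|_\infty/T$, which lets you identify subsequential limits as elements of ${\mathcal E}$ in one pass; the paper instead first establishes a separate discrete ergodic theorem (Proposition \ref{pre-res}, showing $\boldsymbol{\mathcal R}_n\f\to\boldsymbol{\mathcal P}\f$ locally uniformly) and then writes $\boldsymbol{\mathcal P}_t\f$ in terms of $\boldsymbol{\mathcal R}_{[t]}\boldsymbol{\mathcal P}_1\f$ via formula \eqref{preduale}, proving $\T(r)\circ\boldsymbol{\mathcal P}_*=\boldsymbol{\mathcal P}_*$ to land in ${\mathcal E}$. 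Second, you observe that $\nu:=\sum_i\xi_i\mu_i$ is itself an invariant measure for the \emph{scalar} semigroup $T(t)$, which immediately forces $\nu=c\mu$ and pins down the constant $c=\ell(\xi)$ without recourse to the paper's Riesz-representation step in Step 2. Both simplifications buy a shorter, more self-contained argument at the cost of nothing; the core ideas are the same. One small expository remark: when you reduce the invariance identity at $\f=f\xi$ to $\int T(t)f\,d\nu=\int f\,d\nu$, the step you actually need is $\T(t)(f\xi)=(T(t)f)\xi$, which is the dual of the identity $\langle\xi,\T(t)\f\rangle=T(t)\langle\xi,\f\rangle$ you state; both follow from Lemma \ref{carusorompi} since $\xi\in\ker C(x)=\ker(C(x))^*$, but it is worth making the companion identity explicit, since the stated one alone does not directly collapse $\sum_i\int(\T(t)(f\xi))_i\,d\mu_i$.
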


In the proof of Theorem \ref{exi_inv_meas} we shall make use of the following result.

\begin{prop}\label{pre-res}
Under Hypotheses $\ref{hyp-base}(i)$-$(v)$, let $\boldsymbol{\mathcal R}_n$ $(n\in\N)$ be the operator defined on $C_b(\Rd;\R^m)$ by
\begin{eqnarray*}
\boldsymbol{\mathcal R}_n\f=\frac{1}{n}\sum_{k=0}^{n-1}\T(k)\f,\qquad\;\,\f\in C_b(\Rd;\Rm).
\end{eqnarray*}
Then, for any $\f\in C_b(\Rd;\R^m)$, $\boldsymbol{\mathcal R}_n\f$ converges to $\boldsymbol{\mathcal P}\f$ locally uniformly on $\Rd$, as $n\to +\infty$, where
$\boldsymbol{\mathcal P}$ is a projection onto the kernel of the operator $I-\T(1)$.
\end{prop}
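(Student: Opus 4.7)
The plan is to apply the classical von Neumann mean ergodic theorem in a suitable Hilbert space and then to transfer the resulting $L^2$-convergence to locally uniform convergence via the compactness properties of the semigroup given by Proposition \ref{prop-rossana}.

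First, I would extend $\T(1)$ (and each of its powers $\T(k)$) to a contraction on the Hilbert space $L^2_\mu(\Rd;\R^m)$, where $\mu$ is the scalar invariant measure provided by Theorem \ref{thm-valle}. Indeed, combining \eqref{vector-scalar} with the invariance of $\mu$ for $T(t)$ yields
\begin{equation*}
\int_{\Rd}|\T(t)\f|^2\,d\mu\le\int_{\Rd}T(t)|\f|^2\,d\mu=\int_{\Rd}|\f|^2\,d\mu
\end{equation*}
for every $\f\in C_b(\Rd;\R^m)$, and since $C_b(\Rd;\R^m)$ is dense in $L^2_\mu(\Rd;\R^m)$ the extension is immediate. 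By the von Neumann mean ergodic theorem in Hilbert space, for every $\f\in L^2_\mu(\Rd;\R^m)$ the averages $\boldsymbol{\mathcal R}_n\f$ converge in $L^2_\mu$ to a limit $\boldsymbol{\mathcal P}\f$, where $\boldsymbol{\mathcal P}$ is the orthogonal projection onto $\mathrm{Ker}_{L^2_\mu}(I-\T(1))$.

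Next I would upgrade this $L^2_\mu$-convergence to locally uniform convergence whenever $\f\in C_b(\Rd;\R^m)$. By \eqref{tiburtina} the sequence $(\boldsymbol{\mathcal R}_n\f)$ is bounded in $C_b(\Rd;\R^m)$; given any subsequence, Proposition \ref{prop-rossana} (applied at time $t_0=1$) supplies a further subsequence such that $\T(1)\boldsymbol{\mathcal R}_{n_k}\f$ converges locally uniformly to some $\g\in C_b(\Rd;\R^m)$. The elementary identity
\begin{equation*}
\T(1)\boldsymbol{\mathcal R}_n\f-\boldsymbol{\mathcal R}_n\f=\frac{1}{n}(\T(n)\f-\f),
\end{equation*}
whose right-hand side vanishes uniformly on $\Rd$ as $n\to+\infty$, then forces $\boldsymbol{\mathcal R}_{n_k}\f\to\g$ locally uniformly as well; the last statement of Proposition \ref{prop-rossana} gives $\T(1)\g=\g$, so $\g\in\mathcal{E}$. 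Since the sequence is uniformly bounded and $\mu$ is a finite Borel measure, dominated convergence yields $\boldsymbol{\mathcal R}_{n_k}\f\to\g$ also in $L^2_\mu(\Rd;\R^m)$, which identifies $\g$ with $\boldsymbol{\mathcal P}\f$ $\mu$-almost everywhere; as $\mu$ is equivalent to the Lebesgue measure (Theorem \ref{thm-valle}) and $\g$ is continuous, $\boldsymbol{\mathcal P}\f$ admits a continuous representative which coincides pointwise with $\g$ on $\Rd$.

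Every subsequential locally uniform limit of $(\boldsymbol{\mathcal R}_n\f)$ is therefore equal to this common continuous representative, and a standard subsequence-of-subsequence argument shows that the full sequence $\boldsymbol{\mathcal R}_n\f$ converges locally uniformly to $\boldsymbol{\mathcal P}\f$. The operator $\boldsymbol{\mathcal P}$ so produced is linear, maps $C_b(\Rd;\R^m)$ into $\mathcal{E}=\mathrm{Ker}(I-\T(1))$, and acts as the identity on $\mathcal{E}$ since $\boldsymbol{\mathcal R}_n\f=\f$ for any $\f\in\mathcal{E}$; in particular $\boldsymbol{\mathcal P}^2=\boldsymbol{\mathcal P}$, so $\boldsymbol{\mathcal P}$ is a projection onto $\mathrm{Ker}(I-\T(1))$. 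The main obstacle I anticipate is precisely the step of pinning down the limit of any convergent subsequence: the compactness from Proposition \ref{prop-rossana} alone would only grant limit points in the one-dimensional space $\mathcal{E}$, and it is the scalar invariant measure $\mu$ together with \eqref{vector-scalar} that supply the extra rigidity needed to rule out multiple subsequential limits.
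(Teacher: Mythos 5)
Your proposal is correct (modulo the harmless slip of invoking Proposition \ref{prop-rossana} ``at time $t_0=1$'' --- that proposition gives convergence on $(t_0,+\infty)\times B_m$, so one should take $t_0<1$ to cover $t=1$), but it follows a genuinely different route from the paper's. The paper, after extracting via Proposition \ref{prop-rossana} a subsequence $\boldsymbol{\mathcal R}_{n_k}\f\to\g\in\mathcal E$, pins down the limit of the \emph{full} sequence by a hands-on mean-ergodic decomposition: it writes $\f-\g$ as a locally uniform limit of functions $(I-\T(1))\bm\zeta_k$, uses the telescoping identity $\boldsymbol{\mathcal R}_n(I-\T(1))\bm\zeta_k=\frac1n(\bm\zeta_k-\T(n)\bm\zeta_k)$, and passes to the iterated limit with the help of Proposition \ref{prop-rossana}. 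You instead pin down the limit by moving to the Hilbert space $L^2_\mu(\Rd;\R^m)$ --- where estimate \eqref{vector-scalar} and the invariance of $\mu$ make $\T(1)$ a contraction --- and invoking the von Neumann mean ergodic theorem; the uniqueness of subsequential limits then comes from the equivalence of $\mu$ with Lebesgue measure rather than from the decomposition. The paper's argument is self-contained and does not use the scalar invariant measure at this stage, essentially re-proving the Banach-space mean ergodic theorem in the topology of locally uniform convergence; your argument is shorter once the Hilbert-space ergodic theorem is taken as known, and has the side benefit of identifying the limit with an orthogonal projection in $L^2_\mu$, but it relies on two extra ingredients (the invariant measure $\mu$ with its equivalence to Lebesgue, and the $L^2$-contraction property) that the paper defers to later sections. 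Both approaches share the same compactness step via Proposition \ref{prop-rossana} and the same argument that subsequential limits lie in $\mathrm{Ker}(I-\T(1))$.
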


\begin{proof}
To begin with, we prove that, for any $\f\in C_b(\Rd;\R^m)$, there exists a subsequence $(\boldsymbol{\mathcal R}_{n_k}\f)$ converging locally uniformly
to a function which belongs to ${\rm Ker}(I-\T(1))$.
For this purpose, we fix any such function $\f$ and split
\begin{eqnarray*}
\boldsymbol{\mathcal R}_n\f=\frac{1}{n}\f+\T(1)\bigg (\frac{1}{n}\sum_{k=0}^{n-2}\T(k)\f\bigg )=
\frac{1}{n}\f+\T(1)\bigg (\frac{n-1}{n}\boldsymbol{\mathcal R}_{n-1}\f\bigg ).
\end{eqnarray*}
Since the sequence $(\boldsymbol{\mathcal R}_{n-1}\f)$ is bounded, by Proposition \ref{prop-rossana}
there exists a subsequence  $(\boldsymbol{\mathcal R}_{n_k}\f)$ which converges locally uniformly in $\Rd$ to a function $\g\in C_b(\Rd;\Rm)$.
Clearly, $\g$ belongs to the kernel of $I-\T(1)$. Indeed,
\begin{eqnarray*}
\boldsymbol{\mathcal R}_{n_k}\f-\T(1)\boldsymbol{\mathcal R}_{n_k}\f=\frac{1}{n_k}\sum_{j=0}^{n_k-1}(\T(j)\f-\T(j+1)\f)=\frac{1}{n_k}(\f-\T(n_k)\f).
\end{eqnarray*}
Letting $k$ tend to $+\infty$ and taking \eqref{tiburtina} into account, we conclude that
$\g-\T(1)\g=0$.

Actually, we prove that all the sequence $(\boldsymbol{\mathcal R}_n\f)$ converges to $\g$ locally uniformly in $\Rd$.
For this purpose, we split $\f=\g+(\f-\g)$. Since $\g\in {\rm Ker}(I-\T(1))$, $\boldsymbol{\mathcal R}_n\g=\g$ for any $n\in\N$, so that, trivially, $\boldsymbol{\mathcal R}_n\g$ converges
uniformly in $\Rd$.
As far as the function $\f-\g$ is concerned, we first observe that $\f-\g$ is the local uniform limit in $\Rd$ of a sequence of functions in  $(I-\T(1))(C_b(\Rd;\Rm))$.
Indeed,
\begin{align*}
\f-\g=&\f-\lim_{k\to+\infty}\boldsymbol{\mathcal R}_{n_k}\f
=\lim_{k\to+\infty}
\frac{1}{n_k}\sum_{j=1}^{n_k-1}(I-(\T(1))^j)\f\\\
=&\lim_{k\to+\infty}
\frac{1}{n_k}\sum_{j=1}^{n_k-1}(I-\T(1))\sum_{h=0}^{j-1}\T(h)\f
=:\lim_{k\to+\infty}(I-\T(1))\bm\zeta_k,
\end{align*}
where each function $\bm\zeta_k$ belongs to $C_b(\Rd;\Rm)$ and all the limits appearing in the previous chain of equalities are local uniform in $\Rd$.
Now, we observe that
\begin{eqnarray*}
\boldsymbol{\mathcal R}_n(I-\T(1))\bm\zeta_k=\frac{1}{n}\bigg (\sum_{j=0}^{n-1}\T(j)\bm\zeta_k-\sum_{j=0}^{n-1}\T(j+1)\bm\zeta_k\bigg )
=\frac{1}{n}(\bm\zeta_k-\T(n)\bm\zeta_k).
\end{eqnarray*}
Hence,
\begin{align}
&\|\boldsymbol{\mathcal R}_n(\f-\g)\|_{C_b(B_r;\R^m)}\notag\\
\le &\|\boldsymbol{\mathcal R}_n[\f-\g-(I-\T(1))\bm\zeta_k]\|_{C_b(B_r;\R^m)}+\|\boldsymbol{\mathcal R}_n(I-\T(1))\bm\zeta_k\|_{C_b(B_r;\R^m)}\notag\\
\le& \|\T(\cdot)[\f-\g-(I-\T(1))\bm\zeta_k]\|_{C_b((0,+\infty)\times B_r;\R^m)}+
\frac{1}{n}\|\bm\zeta_k-\T(n)\bm\zeta_k\|_{\infty},
\label{frecciarossa}
\end{align}
for any $k,n\in\N$ and $r>0$.
Since $\f-\g-(I-\T(1))\bm\zeta_k$ vanishes locally uniformly in $\Rd$ as $k\to +\infty$, Proposition \ref{prop-rossana} shows that
$\T(\cdot)[\f-\g-(I-\T(1))\bm\zeta_k]$ converges uniformly in $(0,+\infty)\times B_r$ to zero as $k\to +\infty$.
Hence, letting first $n$ and then $k$ tend to $+\infty$ in the first and last side of \eqref{frecciarossa} we conclude that
$\boldsymbol{\mathcal R}_n(\f-\g)$ converges to zero locally uniformly in $\Rd$.

Let us denote by $\boldsymbol{\mathcal P} \f$ the limit of $\boldsymbol{\mathcal R}_n \f$ as $n \to +\infty$.
By the first part of the proof, we already know that the image of $\boldsymbol{\mathcal P}$ coincides with the kernel of $I-\T(1)$ and that
$\boldsymbol{\mathcal R}_n(\boldsymbol{\mathcal P}\f)=\boldsymbol{\mathcal P}\f$ for any $\f\in C_b(\Rd;\R^m)$ and $n\in\N$.
Thus, letting $n\to +\infty$ we deduce that $\boldsymbol{\mathcal P}^2=\boldsymbol{\mathcal P}$.
\end{proof}

\begin{proof}[Proof of Theorem $\ref{exi_inv_meas}$] We split the proof into three steps.

\emph{Step 1.} Here, we show that the family of measures $\{\nu_j: j=1,\ldots,m\}$ defined by $\nu_j=\xi_j\mu$ is a system of invariant measures for $\T(t)$.
For this purpose, we fix $\f\in C_b(\Rd;\R^m)$. Taking Lemma \ref{carusorompi} into account, it is easy to show that the function
$v=\langle \T(\cdot)\f,\xi\rangle$ is a bounded classical solution to the Cauchy problem
\begin{eqnarray*}
\left\{
\begin{array}{lll}
D_tv(t,x)={\mathcal A} v(t,x), &t\in (0,+\infty), &x\in\Rd,\\[1mm]
v(0,x)=\langle \f(x),\xi\rangle, &&x\in\Rd.
\end{array}
\right.
\end{eqnarray*}
Since this problem admits a unique bounded classical solution, it follows that
$v=T(\cdot)\langle \f,\xi\rangle$, i.e.,
$\langle (\T(t)\f)(x),\xi\rangle
=(T(\cdot)\langle\f,\xi\rangle)(x)$ for any $t\ge 0$ and $x\in\Rd$.

For any $j=1,\ldots,m$, let us set
$\nu_j=\xi_j\mu$. Then, the above result and the invariance of $\mu$ imply that
\begin{align*}
\sum_{j=1}^m\int_{\Rd}(\T(t)\f)_jd\nu_j
= &\int_{\Rd}T(t)\langle \f,\xi\rangle  d\mu=\int_{\Rd}\langle\f,\xi\rangle d\mu =\sum_{j=1}^m\int_{\Rd}f_jd\nu_j.
\end{align*}
Hence, the family $\{\nu_j: j=1,\ldots,m\}$ is a system of invariant measures for $\T(t)$.

\emph{Step 2.} Here, we prove that
\begin{equation}
\lim_{t\to +\infty}\boldsymbol{\mathcal P}_t\f:=\lim_{t\to +\infty}\frac{1}{t}\int_0^t(\T(s)\f)(\cdot)ds
=\bigg (\sum_{j=1}^m\int_{\Rd}f_jd\nu_j\bigg )\xi,
\label{marito}
\end{equation}
locally uniformly on $\Rd$ for any $\f\in C_b(\Rd;\R^m)$.
First of all we observe that
\begin{align}
(\boldsymbol{\mathcal P}_{t}\f)(x)=& \frac{1}{t}\int_0^{[t]}(\T(s)\f )(x) ds+ \frac{1}{t}\int_0^{\{t\}}(\T(s+[t])\f)(x) ds\notag\\
=& \frac{1}{t}\sum_{k=0}^{[t]-1}\int_0^{1}(\T(s+k)\f )(x) ds+ \frac{1}{t}\int_0^{\{t\}}(\T(s+[t])\f)(x) ds\notag\\
= & \frac{[t]}{t}(\boldsymbol{\mathcal R}_{[t]}\boldsymbol{\mathcal P}_1\f)(x)+ \frac{\{t\}}{t}((\T(1))^{[t]}\boldsymbol{\mathcal P}_{\{t\}}\f)(x)
\label{preduale}
\end{align}
for any $t>1$, $x\in\Rd$ and $\f\in C_b(\Rd;\R^m)$,
where $[t]$ and $\{t\}$ denote respectively the integer and the fractional part of $t$.
Taking  Proposition \ref{pre-res} into account and observing that
$\|(\T(1))^{[t]}\boldsymbol{\mathcal P}_{\{t\}}\f\|_{\infty}\le \|\f\|_{\infty}$ for any $t>0$,
from \eqref{preduale} we conclude that $\boldsymbol{\mathcal P}_t\f$ converges to $\boldsymbol{\mathcal P}\boldsymbol{\mathcal P}_1\f=:\boldsymbol{\mathcal P}_*\f$ locally uniformly on $\Rd$, as $t\to +\infty$, for every $\f\in C_b(\Rd;\R^m)$.

To show that $\boldsymbol{\mathcal P}_*$ is a projection it is enough to prove that
\begin{align}
\T(r)\circ \boldsymbol{\mathcal P}_*=\boldsymbol{\mathcal P}_*, \qquad\;\, r\geq0.
\label{cucina}
\end{align}
Indeed, once \eqref{cucina} is proved, we get $\boldsymbol{\mathcal P}_1\circ \boldsymbol{\mathcal P}_*=\boldsymbol{\mathcal P}_*$ and $\boldsymbol{\mathcal P}\circ\boldsymbol{\mathcal P}_*=\boldsymbol{\mathcal P}_*$, which clearly
imply that $\boldsymbol{\mathcal P}_*^2=\boldsymbol{\mathcal P}_*$. Fix $r>0$ and observe that, for any $t>0$, it holds that
\begin{align*}
\T(r)\boldsymbol{\mathcal P}_t\f
=&\frac{1}{t}\int_r^t(\T(s)\f)(\cdot)ds+\frac{1}{t}\int_t^{t+r}(\T(s)\f)(\cdot)ds \\
= & \boldsymbol{\mathcal P}_t\f+\frac{1}{t}\int_0^r((\T(t)-I)\T(s)\f)(\cdot)ds.
\end{align*}
Letting $t\rightarrow+\infty$ and taking Proposition \ref{prop-rossana} into account, we get \eqref{cucina}. Finally, we prove that $\boldsymbol{\mathcal P}_*$ is a projection on $\mathcal E$. This is equivalent to showing
that $\f\in\mathcal E$ if and only if $\boldsymbol{\mathcal P}_*\f=\f$. So, let us fix $\f\in {\mathcal E}$. Then, $\boldsymbol{\mathcal P}_t\f=\f$ for any $t>0$ and,
therefore, $\boldsymbol{\mathcal P}_*\f=\f$. Conversely, let us assume that $\boldsymbol{\mathcal P}_*\f=\f$; from \eqref{cucina} we deduce
that $\T(r)\f=\T(r)\boldsymbol{\mathcal P}_*\f=\boldsymbol{\mathcal P}_*\f=\f$ for any $r>0$, so that $\f\in {\mathcal E}$.

Since $\mathcal E$ consists of constant functions, we conclude that $\boldsymbol{\mathcal P}_*\f$ is a constant function for any $\f\in C_b(\Rd;\Rm)$. Hence, $\boldsymbol{\mathcal P}_*\f={\mathcal M}_{\f}\xi$
for any $\f\in C_b(\Rd;\Rm)$ and some bounded linear operator $\f\mapsto {\mathcal M}_{\f}\in\R$.
By the Riesz representation theorem, there exists a family $\{\mu_i: i=1,\ldots,m\}$ of finite and nonnegative Borel measures on $\Rd$ such that
\begin{equation}
{\mathcal M}_{\f}=\sum_{k=1}^m \int_{\Rd}f_k d\mu_k,\qquad\;\,\f\in C_0(\Rd;\R^m).
\label{montezuma}
\end{equation}

We claim that the previous formula can be extended to any function belonging to $C_b(\Rd; \Rm)$. To this aim, first of all we observe that
${\mathcal M}_{\f}$ is well defined for any $\f\in C_b(\Rd;\R^m)$ and the operator $C_b(\Rd;\R^m)\ni\f\mapsto {\mathcal M}_{\f}$ is bounded.
 Now, fix $\f\in C_b(\Rd;\R^m)$ and let $(\f_n)\subset C_0(\Rd; \Rm)$ be a sequence converging to $\f$ locally uniformly in $\Rd$ as $n \to +\infty$ and
 such that $\|\f_n\|_\infty\le \|\f\|_\infty$ for any $n \in \N$. Splitting $\boldsymbol{\mathcal P}_t\f=\boldsymbol{\mathcal P}_t\f_n+\boldsymbol{\mathcal P}_t(\f-\f_n)$ for any $t>0$ and $n\in\N$,
we can estimate
\begin{align*}
\bigg |\boldsymbol{\mathcal P}_t\f-\bigg (\sum_{k=1}^m \int_{\Rd}f_k d\mu_k\bigg )\xi\bigg |\le &|\boldsymbol{\mathcal P}_t\f_n-{\mathcal M}_{\f_n}\xi|+|\xi|\sum_{k=1}^m\int_{\Rd}|f_{n,k}-f_k|d\mu_k\\
&+\sup_{t>0}|\T(t)(\f_n-\f)|
\end{align*}
in $\Rd$ for any $t>0$ and $n\in\N$. Letting $t$ tend to $+\infty$ yields
\begin{align*}
&\limsup_{t\to+\infty}\bigg |\boldsymbol{\mathcal P}_t\f-\bigg (\sum_{k=1}^m \int_{\Rd}f_k d\mu_k\bigg )\xi\bigg |\\
\le &|\xi|\sum_{k=1}^m\int_{\Rd}|f_{n,k}-f_k|d\mu_k+\sup_{t>0}|(\T(t)(\f_n-\f))(x)|,
\end{align*}
for any $x\in\Rd$.
Finally, letting $n\to+\infty$ in the above estimate and taking again Proposition \ref{prop-rossana} into account, we conclude that
\eqref{montezuma} holds true also for any $\f\in C_b(\Rd;\Rm)$.

Now, we can complete the proof of \eqref{marito}.
%For this purpose, we begin by observing that
%\begin{align*}
%\xi_i\sum_{k=1}^m\int_{\Rd}(\T(t)\f)_kd\mu_k
%=&M_{\T(t)\f}\xi_i=(\boldsymbol{\mathcal P}_*\T(t)\f)_i\\
%= & (\boldsymbol{\mathcal P}_*\f)_i+\lim_{\tau\to +\infty}\frac{1}{\tau}\left(\int_{\tau}^{t+\tau}\!(\T(s)\f)_i(0)ds-\int_0^t(\T(s)\f)_i(0)ds\right) \\
%= & (\boldsymbol{\mathcal P}_*\f)_i=\xi_iM_{\f}=\xi_i\sum_{k=1}^m\int_{\Rd}f_kd\mu_k.
%\end{align*}
%for any $t>0$ and $\f\in C_b(\Rd;\Rm)$. So, formula \eqref{caricatore} follows for functions in $C_b(\Rd;\R^m)$, observing that $\xi_i>0$ for any $i=1,\ldots,m$.
%Moreover, if $\f\equiv\xi$, then $\boldsymbol{\mathcal P}_*\f\equiv\xi$, so that $M_{\f}=1$, or equivalently,
%\begin{equation}
%1= \sum_{i=1}^m\xi_i\mu_i(\Rd),\qquad\;\, i=1, \ldots, m.
%\label{angiulino}
%\end{equation}
%
Since $\{\nu_j: j=1,\ldots,m\}$ is a system of invariant measures for $\T(t)$, applying Fubini theorem, we easily deduce that
\begin{eqnarray*}
\sum_{j=1}^m\int_{\Rd}(\boldsymbol{\mathcal P}_t\f)_jd\nu_j=\sum_{j=1}^m\int_{\Rd}f_jd\nu_j
\end{eqnarray*}
for any $t>0$ and $\f\in C_b(\Rd;\R^m)$. Letting $t$ tend to $+\infty$ in the previous formula, by dominated convergence we can infer that
\begin{equation}
\sum_{j=1}^m\int_{\Rd}(\boldsymbol{\mathcal P}_*\f)_jd\nu_j=\sum_{j=1}^m\int_{\Rd}f_jd\nu_j
\label{ladri}
\end{equation}
or, equivalently, taking into account that $|\xi|=1$,
\begin{eqnarray*}
\sum_{j=1}^m\int_{\Rd}f_jd\mu_j=\sum_{j=1}^m\int_{\Rd}f_jd\nu_j
\end{eqnarray*}
for any $\f\in C_b(\Rd;\R^m)$. Taking $\f=fe_j$ with $f\in C_b(\Rd)$ and $j=1,\ldots,m$, we conclude that
\begin{eqnarray*}
\int_{\Rd}fd\mu_j=\int_{\Rd}fd\nu_j,\qquad\;\,f\in C_b(\Rd).
\end{eqnarray*}
This is enough to infer that $\mu_j=\nu_j$ for any $j=1,\ldots,m$.

{\emph Step 3.} Suppose that $\{\widetilde\mu_j: j=1,\ldots,m\}$ is another system of invariant
measures for $\T(t)$. Then, \eqref{ladri} can be written with $\nu_j$ being replaced by $\widetilde\mu_j$.
Letting $t$ tend to $+\infty$, we deduce that
\begin{eqnarray*}
\sum_{k=1}^m\xi_k\widetilde\mu_k(\Rd)\sum_{j=1}^m\int_{\Rd}f_jd\nu_j=\sum_{j=1}^m\int_{\Rd}f_jd\widetilde\mu_j
\end{eqnarray*}
for any $\f\in C_b(\Rd;\R^m)$. Hence,
$\widetilde\mu_j=c\xi_j\mu$ for any $j=1,\ldots,m$, where
$c=\sum_{k=1}^m\xi_k\widetilde\mu_k(\Rd)$. This completes the proof.
\end{proof}

\begin{rmk}
\label{rmk-sign}{\rm
The sign condition on the quadratic form induced by $C$ is inspired by the scalar case where typically one assumes that
the potential term of the elliptic operator identically vanishes on $\Rd$ to guarantee the existence of an invariant measure.
Hypothesis \ref{hyp-base}(iii) seems the natural extension in the multidimensional case. If that condition is violated, then we can find examples of matrix-valued functions $C$ such that nontrivial systems of invariant measures for the associated
semigroup $\T(t)$ do not exist.
Consider for instance the particular case when $C$ is a symmetric constant matrix
and assume that $\langle C\xi,\xi\rangle>0$ for some $\xi\in\R^m$.
In this case $\sigma(C)\cap \R^+\neq \varnothing$ and the ordinary differential equation
$D_t\uu=C\uu$ admits a solution $\uu$, with all positive components,
such that $|\uu(t)|\ge e^{t\lambda}$ for any $t>0$ and some $\lambda>0$.
Let us set $\uu_0=\uu(0)$. Then, clearly, $\uu=\T(\cdot)\uu_0$.
It thus follows that for any $t>0$ there exists $j_t\in\{1,\ldots,m\}$ such that $(\T(t)\uu_0)_{j_t}\ge m^{-1/2}e^{\lambda t}$.
Since $\T(\cdot)\uu_0$ is independent of $x$, the invariance property \eqref{caricatore} shows that
\begin{align*}
\sum_{j=1}^mu_{0,j}\mu_j(\Rd)=&\sum_{j=1}^m\int_{\Rd}u_{0,j}d\mu_j=\sum_{j=1}^m\int_{\Rd}(\T(t)\uu_0)_jd\mu_j
%=\sum_{j=1}^m(\T(t)\uu_0)_j\mu_j(\Rd)
\\
\ge &(\T(t)\uu_0)_{j_t}\mu_{j_t}(\Rd)
\ge m^{-\frac{1}{2}}\min_{1\le j\le m}\mu_j(\Rd)e^{\lambda t}
\end{align*}
for any $t>0$. Letting $t$ tend to $+\infty$ we get to a contradiction.

On the other hand, if $C$ is a matrix-valued function which satisfies the condition $\sup_{x\in\Rd,|\xi|=1}\langle C(x)\xi,\xi\rangle<0$,
then, by \cite[Theorem 2.6]{DelLor11OnA}, the sup-norm of the function $\T(t)\f$ exponentially decreases to zero as $t\to +\infty$
for any $\f\in C_b(\Rd;\R^m)$. Hence, if we take $\f=e_k$ ($k=1,\ldots,m$), then using again \eqref{caricatore} we obtain
\begin{align*}
\mu_k(\Rd)=\sum_{j=1}^m\int_{\Rd}(\T(t)e_k)_jd\mu_j
\end{align*}
and, letting $t$ tend to $+\infty$, by dominated convergence we conclude that $\mu_k(\Rd)=0$ for any $k\in\{1,\ldots,m\}$.
}\end{rmk}

\begin{example}
\label{example-1}
{\rm
Let $\A$ be as in \eqref{picco}
with $Q(x)=(1+|x|^2)^{\gamma}Q^0$ for any $x\in\Rd$, $Q^0$ being a constant, symmetric and positive definite $d\times d$-matrix and $\gamma$ being a nonnegative number.
Let further ${\bf b}(x)=-b_0x(1+|x|^2)^{\beta}$, for any $x\in\Rd$ and some positive constants $b_0$ and $\beta$, and $C$ be any $m\times m$-matrix, with entries in $C^{\alpha}_{\rm loc}(\Rd)$ for some $\alpha\in (0,1)$, and such that the elements on the main diagonal
are negative, whereas the off-diagonal ones are positive and the sum of the elements of each row and column is zero. 
By the Gershgorin circle theorem, applied to $C(x)+(C(x))^*$, (see \cite[Theorem 1.11]{gersh}), we can infer that $\langle C(x)y,y\rangle\le 0$ for any $x\in\Rd$ and $y\in\Rm$. Moreover, we can take as $\xi$ the vector with all entries equal to one (see Remark \ref{nosym}).
It is easy to check that if $\beta>(\gamma-1)^+$ then Hypothesis \ref{hyp-base}(iv) is satisfied as well, with $\varphi(x)=1+|x|^2$ for any $x\in \Rd$. Indeed
\begin{equation}\label{afi}
({\mathcal A} \varphi)(x)=2\varphi(x)[(1+|x|^2)^{\gamma-1}{\rm Tr}(Q^0)-b_0|x|^2(1+|x|^2)^{\beta-1}],\qquad\;\,x\in\Rd,
\end{equation}
and the term in brackets in \eqref{afi} tends to $-\infty$ as $|x|\to +\infty$. Therefore, we can determine two positive constants $a$ and $c$ such that ${\mathcal A} \varphi \le a -c \varphi$ in $\Rd$. In this case, all the assumptions in Theorem \ref{exi_inv_meas} are satisfied and consequently it can be applied.} \end{example}

\begin{rmk}
{\rm Theorem \ref{exi_inv_meas} shows that, in general, a system of invariant measures $\{\mu_i: i=1,\ldots,m\}$ for $\T(t)$ does not consist only of probability measures. We can infer that each $\mu_i$ is a probability measure if and only if $\xi_i=1$ for any $i=1,\ldots,m$.}
\end{rmk}

\subsection{The semigroup $\bm{\T(t)}$ in $\bm{L^p}$-spaces}
In this subsection, we prove that the semigroup $\T(t)$ can be extended, with a bounded strongly continuous semigroup, to the $L^p$-spaces related to any system of invariant measures $\{\mu_i: i=1,\ldots,m\}$ and we investigate on some of its smoothing effects in these spaces.

Throughout the section, $\{\mu_i: i=1,\ldots,m\}$ is any system of invariant measures for $\T(t)$.
Moreover, for any $p\in [1,+\infty)$, we write $L^p_{\bm\mu}(\Rd;\Rm)$ to denote the set $\bigotimes_{i=1}^mL^p_{\mu_i}(\Rd)$, which we endow with
the natural norm $\f\mapsto\left (\sum_{i=1}^m\int_{\Rd}|f_i|^pd\mu_i\right )^{1/p}$.
Similarly, by $W^{j,p}_{\bm\mu}(\Rd;\Rm)$ we denote the Sobolev space of order $j$ of all the functions $\f\in L^p_{\bm\mu}(\Rd;\Rm)$ whose distributional derivatives up to the $j$-th order are in $L^p_{\bm\mu}(\Rd;\Rm)$. It is normed by setting $\|\f\|_{W^{j,p}_{\bm\mu}(\Rd;\R^m)}=\sum_{|\alpha|\le j}\|D^{\alpha}\f\|_{L^p_{\bm\mu}(\Rd;\Rm)}$
for any $\f\in W^{j,p}_{\bm\mu}(\Rd;\Rm)$. To lighten the notation we write $\|\cdot\|_{p,{\bm\mu}}$, resp. $\|\cdot\|_{p,{\mu_i}}$, resp. $\|\cdot\|_{j,p,\bm\mu}$ in place of $\|\cdot\|_{L^p_{\bm\mu}(\Rd;\Rm)}$, resp. $\|\cdot\|_{L^p_{\mu_i}(\Rd)}$ and $\|\cdot\|_{W^{j,p}_{\bm\mu}(\Rd;\Rm)}$.

\begin{rmk}\label{rm_dens}{\rm
Since the measures $\mu_i$ ($i=1, \ldots,m$) are finite Borel measures, the space $C_b(\Rd;\Rm)$ is dense in $L^p_{\bm\mu}(\Rd;\Rm)$ for any $p \in[1,+\infty)$.
See \cite[Remark 1.46]{AFP} for further details.}
\end{rmk}

\begin{prop}
The semigroup $\T(t)$ extends to a strongly continuous semigroup $($still denoted by $\T(t))$
on $L^p_{\bm\mu}(\Rd;\Rm)$ for any $1\le p<+\infty$. Moreover for any $p\in[1,+\infty)$ it holds that
\begin{align}
\label{festa}
\|\T(t)\|_{\mathcal L(L^p_{\bm\mu}(\Rd;\Rm))}\leq 2^{\frac{p-1}{p}}, \qquad\;\, t>0.
\end{align}
Finally, the set
${\mathcal D}=\{\uu\in C_b(\Rd;\R^m)\cap\bigcap_{p<+\infty} W^{2,p}_{\rm loc}(\Rd;\R^m): \bm{\mathcal A}\uu\in C_b(\Rd;\R^m)\}$
is a core for the infinitesimal generator ${\bf A}_p$ of $\T(t)$ in $L^p_{\bm\mu}(\Rd;\R^m)$, for any $p\in [1,+\infty)$.
\end{prop}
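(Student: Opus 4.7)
The proof rests on the pointwise estimate \eqref{vector-scalar}, the invariance identity \eqref{caricatore}, the positivity of $\T(t)$ (Proposition \ref{posi}), and the density of $C_b(\Rd;\Rm)$ in $L^p_{\bm\mu}(\Rd;\Rm)$ (Remark \ref{rm_dens}). I organize the argument into three steps: deriving the $L^p$-bound and the extension, promoting strong continuity from $C_b$, and identifying $\mathcal D$ as a core via the resolvent.

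\emph{Bound and extension.} For $\f\in C_b(\Rd;\Rm)$ I would decompose $\f=\f^+-\f^-$ componentwise; by Proposition \ref{posi} both $\T(t)\f^\pm$ have nonnegative components, so
\begin{equation*}
|(\T(t)\f)_i|^p\le 2^{p-1}\bigl[((\T(t)\f^+)_i)^p+((\T(t)\f^-)_i)^p\bigr].
\end{equation*}
The problem reduces to the contraction $\sum_i\int((\T(t)\g)_i)^p\,d\mu_i\le\sum_i\int g_i^p\,d\mu_i$ on the positive cone $\g\ge \mathbf 0$. To establish it, observe that $\T(t)\xi=\xi$ (a consequence of Proposition \ref{laurea}) reads $\sum_j\xi_jp_{ij}(t,x,\Rd)=\xi_i$, so $\xi_jp_{ij}(t,x,dy)/\xi_i$ is a probability kernel on $\{1,\ldots,m\}\times\Rd$. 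Jensen's inequality for $|\cdot|^p$ against this kernel, followed by integration against $\mu_i=c\xi_i\mu$ and the invariance \eqref{caricatore} applied to the auxiliary vector with entries $\xi_j^{1-p}g_j^p$, closes the cone estimate. Combined with $(f_i^+)^p+(f_i^-)^p=|f_i|^p$, this gives $\|\T(t)\f\|_{p,\bm\mu}^p\le 2^{p-1}\|\f\|_{p,\bm\mu}^p$. By Remark \ref{rm_dens} each $\T(t)$ extends uniquely and boundedly to $L^p_{\bm\mu}(\Rd;\Rm)$ with norm at most $2^{(p-1)/p}$, and the semigroup law transfers by density and continuity.

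\emph{Strong continuity.} For $\f\in C_b(\Rd;\Rm)$ the map $(t,x)\mapsto(\T(t)\f)(x)$ lies in $C_b([0,+\infty)\times\Rd;\Rm)$ with $\T(0)\f=\f$ and is uniformly bounded by $\|\f\|_\infty$. Since each $\mu_i$ is finite, dominated convergence gives $\T(t)\f\to\f$ in $L^p_{\bm\mu}(\Rd;\Rm)$ as $t\to 0^+$. The uniform bound \eqref{festa} together with Remark \ref{rm_dens} and a standard $\varepsilon/3$ argument then propagates strong continuity to the whole of $L^p_{\bm\mu}(\Rd;\Rm)$.

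\emph{The core $\mathcal D$.} I would proceed by the resolvent method. Fix $\lambda$ larger than the growth bound of $\T(t)$ on $L^p_{\bm\mu}$ and set $R(\lambda):=(\lambda-{\bf A}_p)^{-1}$. For $\g\in C_b(\Rd;\Rm)$ the Laplace-transform representation $R(\lambda)\g=\int_0^{+\infty}e^{-\lambda t}\T(t)\g\,dt$ converges in $C_b$ by \eqref{tiburtina} and defines a classical solution of $\lambda\vv-\A\vv=\g$; interior Schauder estimates (Theorem \ref{thm-A2}) yield $R(\lambda)\g\in\bigcap_{q<+\infty}W^{2,q}_{\rm loc}(\Rd;\Rm)$, and the resolvent equation gives $\A R(\lambda)\g=\lambda R(\lambda)\g-\g\in C_b(\Rd;\Rm)$; hence $R(\lambda)\g\in\mathcal D$. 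Given any $\uu\in D({\bf A}_p)$, set $\g=\lambda\uu-{\bf A}_p\uu\in L^p_{\bm\mu}(\Rd;\Rm)$ and approximate $\g$ in $L^p_{\bm\mu}$ by $\g_n\in C_b(\Rd;\Rm)$. Then $\uu_n:=R(\lambda)\g_n\in\mathcal D$, and boundedness of $R(\lambda)$ on $L^p_{\bm\mu}$ delivers $(\uu_n,{\bf A}_p\uu_n)=(\uu_n,\lambda\uu_n-\g_n)\to(\uu,\lambda\uu-\g)=(\uu,{\bf A}_p\uu)$ in $L^p_{\bm\mu}\times L^p_{\bm\mu}$, so $\mathcal D$ is dense in $D({\bf A}_p)$ for the graph norm, which is the very definition of a core. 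The main obstacle is the positive-cone contraction: the identity $\T(t)\xi=\xi$ is essential to absorb the scalar weights $\xi_i$ introduced by $\mu_i=c\xi_i\mu$, and without it the Jensen/invariance bookkeeping would leave spurious $\xi$-dependent constants.
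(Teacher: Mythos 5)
Your outline for strong continuity is the same as the paper's (dominated convergence on $C_b$ plus density), and your resolvent argument for the core, while different in route from the paper's — which shows $\mathcal D\subset D({\bf A}_p)$ directly via the characterization of $\mathcal D$ in \cite{DelLor11OnA}, density (since $C^\infty_c\subset\mathcal D$) and $\T(t)$-invariance, and then cites \cite[Proposition II.1.7]{EN} — is a legitimate alternative. The problem is the first step.

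Your positive-cone contraction is false as stated, and the Jensen bookkeeping does not in fact close. Tracing it: with $\kappa_i(d(j,y))=\xi_j p_{ij}(t,x,dy)/\xi_i$, Jensen yields
\begin{equation*}
\big((\T(t)\g)_i\big)^p\le\xi_i^{p-1}\sum_{j=1}^m\int_{\Rd}\frac{g_j^p}{\xi_j^{p-1}}\,p_{ij}(t,x,dy)=\xi_i^{p-1}\,(\T(t)\h)_i,\qquad h_j=\xi_j^{1-p}g_j^p.
\end{equation*}
Integrating against $\mu_i=c\xi_i\mu$ and summing gives $\sum_i\int(\T(t)\g)_i^p\,d\mu_i\le c\sum_i\xi_i^p\int(\T(t)\h)_i\,d\mu$, whereas what you need on the right is $c\sum_i\xi_i^p\int h_i\,d\mu$. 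The weights $\xi_i^p$ do not match the invariance weights $\xi_i$: \eqref{caricatore} gives $\sum_i\xi_i\int(\T(t)\h)_i\,d\mu=\sum_i\xi_i\int h_i\,d\mu$, and the extra factor $\xi_i^{p-1}$ cannot be absorbed by rewriting the auxiliary vector, because multiplying componentwise by $\xi_i^{p-1}$ does not commute with the coupled semigroup $\T(t)$. In fact the missing inequality $\sum_i\xi_i^p\int(\T(t)\h)_i\,d\mu\le\sum_i\xi_i^p\int h_i\,d\mu$ for all $\h\ge 0$ would mean that $\{\xi_i^p\mu\}$ is superinvariant; differentiating at $t=0$ and using that $\mu$ is invariant for $\mathcal A$, this would require $\sum_{i\ne j}c_{ij}(x)\xi_i(\xi_i^{p-1}-\xi_j^{p-1})\le 0$ for every $j$ and $x$, which fails whenever the entries of $\xi$ are not all equal. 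So the cone contraction with constant one is not available, and your sentence "closes the cone estimate'' does not.

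The paper circumvents the cone argument entirely. It observes that $\|\T(t)e_k\|_\infty\le 1$ gives $p_{ik}(t,x,\Rd)\le 1$, then applies Jensen directly to the finite nonnegative measures $p_{ik}(t,x,\cdot)$,
\begin{equation*}
|(\T(t)\f)_i(x)|^p\le 2^{p-1}\sum_{k=1}^m\Big|\int_{\Rd}f_k\,p_{ik}(t,x,dy)\Big|^p\le 2^{p-1}\big(\T(t)(|f_1|^p,\ldots,|f_m|^p)\big)_i(x),
\end{equation*}
and then closes by integrating against $\mu_i$ and invoking \eqref{caricatore} applied to $(|f_1|^p,\ldots,|f_m|^p)$. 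No positive/negative decomposition, no eigenvector weights, and the constant $2^{(p-1)/p}$ in \eqref{festa} comes from the first inequality. (Incidentally, that first inequality as written seems to require $m=2$; in general $|\sum_{k=1}^m a_k|^p\le m^{p-1}\sum_k|a_k|^p$, so for $m>2$ one should expect $m^{(p-1)/p}$ rather than $2^{(p-1)/p}$ — but this is an issue in the source, not a reason to expect your tighter constant to hold.) If you want to salvage your structure, drop the attempt to get constant one on the positive cone: use Jensen with the unnormalized kernel $\sum_k p_{ik}$ exactly as the paper does, and accept the dimension-dependent constant.
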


\begin{proof}
Since $\|\T(t)e_k\|_{\infty}\leq 1$, it follows that $p_{ik}(t,x,\Rd)\leq 1$ for any $i,k=1,\ldots,m$, $t>0$ and $x\in\Rd$.
Thus, the Jensen inequality and formula \eqref{for_rep_1} yield
\begin{align*}
|(\T(t)\f)_i(x)|^p
\leq & 2^{p-1}\sum_{k=1}^m\left|\int_{\Rd}f_k(y)p_{ik}(t,x,dy)\right|^p \\
\leq & 2^{p-1}\sum_{k=1}^m [p_{ik}(t, x, \Rd)]^{p-1}\int_{\Rd}\left|f_k(y)\right|^pp_{ik}(t,x,dy) \\
\leq & 2^{p-1}(\T(t)(|f_1|^p, \ldots,|f_m|^p ))_i(x)
\end{align*}
for any $t>0, x\in \Rd$, $i=1, \ldots, m$, $\f\in C_b(\Rd;\R^m)$ and $p \in[1,+\infty)$. Moreover, by the invariance property \eqref{caricatore} we deduce that
\begin{align*}
\sum_{i=1}^m\int_{\Rd}|(\T(t)\f)_i|^pd\mu_i
\leq & 2^{p-1}\sum_{i=1}^m\int_{\Rd}(\T(t)(|f_1|^p, \ldots,|f_m|^p ))_id\mu_i \\
= & 2^{p-1}\sum_{i=1}^m\int_{\Rd}|f_i|^pd\mu_i
\end{align*}
for any $t>0$ and $\f\in C_b(\Rd;\Rm)$. Taking Remark \ref{rm_dens} into account, from the previous chain of inequalities we easily deduce that $\T(t)$ extends
to a linear bounded operator in $L^p_{\bm\mu}(\Rd;\Rm)$ and formula \eqref{festa} follows. The semigroup property easily follows. Hence, $\T(t)$ is a semigroup in
$L^p_{\bm\mu}(\Rd;\R^m)$.

To show that such a semigroup is strongly continuous, we first observe that $\|\T(t)\f-\f\|_{p,\bm{\mu}}$ vanishes as $t \to 0^+$ for any $\f\in C_b(\Rd;\Rm)$ and $p \in [1,+\infty)$. Indeed,
for any such function, $\T(t)\f$ converges locally uniformly to $\f$ as $t\to 0^+$ and $\|\T(t)\f\|_\infty\leq\|\f\|_\infty$; hence by the dominated convergence theorem we get the assertion.

Now, fix $\f\in L^p_{\bm\mu}(\Rd;\Rm)$ and let $(\f_n)$ be a sequence in $C_b(\Rd;\Rm)$ converging
 to $\f$ in $L^p_{\bm \mu}(\Rd;\Rm)$ as $n \to +\infty$. For any $i=1, \ldots,m$ and $n \in \N$, we can estimate
\begin{align*}
\|(\T(t)\f)_i-f_i\|_{p,\mu_i}
%\leq & \|\T(t)\f-\f\|_{p,\bm\mu}\\
\leq & \|\T(t)(\f-\f_n)\|_{p,\bm\mu}+\|\T(t)\f_n-\f_n\|_{p,\bm\mu}+\|\f_n-\f\|_{p,\bm\mu}\notag \\
\leq &\|\T(t)\f_n-\f_n\|_{p,\bm\mu}+(2^{\frac{p-1}{p}}+1)\|\f_n-\f\|^p_{p,\bm\mu},
\end{align*}
where in the last line we have used \eqref{festa}. Letting first $n$ tend to $+\infty$ and then $t$ tend to $0^+$, we deduce that
$\|(\T(t)\f)_i-f_i\|_{L^p_{\mu_i}(\Rd)}$ vanishes as $t\to 0^+$.

To conclude the proof, let us prove that ${\mathcal D}$ is a core for the infinitesimal generator ${\bf A}_p$ of $\T(t)$ in $L^p_{\bm\mu}(\R^d;\R^m)$ for any $p\in [1,+\infty)$.
For this purpose, we observe that, by \cite[Proposition 3.1]{DelLor11OnA}, ${\mathcal D}$ coincides with the set of all $\uu\in C_b(\Rd;\Rm)$ such that
$\sup_{t\in (0,1)}t^{-1}\|\T(t)\uu-\uu\|_{\infty}<+\infty$ and $\lim_{t\to 0^+}t^{-1}(\T(t)\uu-\uu)=\A\uu$ pointwise in $\Rd$.
Since all the $\mu_i$'s are finite measures, from the previous properties and dominated convergence we immediately deduce that, if $\uu\in {\mathcal D}$,
then $t^{-1}(\T(t)\uu-\uu)$ converges to $\A\uu$ in $L^p(\Rd;\Rm)$ as $t$ tends to $0^+$, for any $p\in [1,+\infty)$. Hence, $\uu\in D({\bf A}_p)$ and
${\bf A}_p\uu=\A\uu$, so that ${\mathcal D}\subset D({\bf A}_p)$. Moreover, since it contains $C^{\infty}_c(\Rd;\Rm)$, ${\mathcal D}$ is dense in $L^p_{\bm\mu}(\Rd;\Rm)$ (see also Remark \ref{rm_dens}).
Finally, by \cite[Proposition 3.2]{DelLor11OnA}, $\T(t)$ leaves ${\mathcal D}$ invariant. Hence, applying \cite[Proposition II.1.7]{EN}, we conclude that ${\mathcal D}$ is a core for ${\bf A}_p$.
\end{proof}

The characterization of the domain $D({\bf A}_p)$ is a very hard task and, as the scalar case reveals, $D({\bf A}_p)$ has been characterized only in rather particular situations.
Still the scalar case shows that, in general, we can not expect the semigroup $\T(t)$ to be analytic in $L^p_{\bm\mu}(\Rd;\R^m)$. We refer the interested reader to
\cite{newbook} for further details.
Nevertheless, $\T(t)$ has smoothing effects since maps $L^p_{\bm\mu}(\Rd;\Rm)$ into $W^{2,p}_{\bm\mu}(\Rd;\Rm)$. This property is a consequence of some
pointwise estimates for the first- and second-order spatial derivatives of the semigroup $\T(t)$.
More precisely, we provide sufficient conditions for the estimates
\begin{equation}\label{der_est}
|D^k_x\T(t)\f|^p \le \Gamma_{p,k,h}(t)T(t)\bigg(\sum_{j=0}^h|D^j\f|^2\bigg)^{\frac{p}{2}},
\end{equation}
to hold in $\Rd$ for any $\f\in C^h_b(\Rd,\R^m)$, $t>0$, $p>1$, $k\in\{1,2\}$ and $h\in\{0,\ldots,k\}$ where $\Gamma_{p,k,h}$ is a positive function defined in $(0,+\infty)$.
Estimates \eqref{der_est} also allow us to prove a partial characterization of $D({\bf A}_p)$ (see Corollary \ref{coro-3.12}).

To ease the notation, in the rest of this section we set
\begin{eqnarray*}
\begin{array}{lll}
&{\mathscr B}_{i,\uu}=\displaystyle\sum_{|\alpha|=i}\sum_{k=1}^m\langle J{\bf b}\nabla_xD^{\alpha}_xu_k,\nabla_xD^{\alpha}_xu_k\rangle,\quad\!\!
&{\mathscr B}_2=\displaystyle\bigg (\sum_{i,j=1}^d|D_{ij}{\bf b}|^2\bigg )^{\frac{1}{2}},\\
&{\mathscr C}_i=\displaystyle\bigg (\sum_{|\alpha|=i}|D^{\alpha}C|^2\bigg )^{\frac{1}{2}},
&{\mathscr F}_{i,\uu}=\displaystyle\bigg (\sum_{k=1}^m\sum_{|\alpha|=i}|Q^{\frac{1}{2}}\nabla_x D^{\alpha}_xu_k|^2\bigg )^{\frac{1}{2}},\\
&{\mathscr Q}_i=\displaystyle\bigg (\sum_{|\alpha|=i}|D^{\alpha}Q|^2\bigg )^{\frac{1}{2}},
\end{array}
\end{eqnarray*}
where, if $|\alpha|=0$, then $D^{\alpha}_xu_k=u_k$. We also denote by $r$ the ``best'' function which bounds from above the quadratic form associated to the Jacobian matrix of the drift ${\bf b}$, i.e.,
$\langle J{\bf b}(x)y,y\rangle\le r(x)|y|^2$ for any $x,y\in\Rd$.

Estimate \eqref{der_est} with $k=1$ has been already proved in \cite{AngLorPal} when the differential operator ${\bf\A}$ is in divergence form with first-order coupling. In our case, the assumptions considered in \cite{AngLorPal} and yielding \eqref{der_est} with $k=1$ force the first-order derivatives of the entries of the matrix $C$ to be bounded. To enlighten this hypothesis we consider a set of different assumptions.

\begin{hyp}\label{sandro}
\begin{enumerate}[\rm (i)]
\item
The coefficients $q_{ij}$, $b_i$ and the entries $c_{hk}$ of the matrix valued function $C$, belong to $C^{1+\alpha}_{\rm{loc}}(\Rd)$ for any $i,j=1,\ldots, d$ and $h,k=1, \ldots,m$;
\item
for any $p\in (1,+\infty)$, there exists a positive constant $c_p$ such that
\begin{equation*}
K_p:=\sup_{\Rd}\bigg(r+(1-p)\mu_Q+\frac{1}{4(p-1)\mu_Q}{\mathscr Q}_1^2+c_p{\mathscr C}_1^2\bigg )<+\infty,
\end{equation*}
\end{enumerate}
\end{hyp}

The proof of estimate \eqref{der_est} is the content of the forthcoming Theorems \ref{thm-avvvooooccato} and \ref{thm-4.4}. In the first theorem,
we consider the easiest case $k=1$, under the additional Hypotheses \ref{sandro}. Then, strengthening the assumptions on the coefficients of the operator $\A$,
we prove estimate \eqref{der_est} with $k=2$. Before entering into details,
we preliminary observe that it suffices to prove \eqref{der_est} for $p\in (1,2]$. Indeed, suppose that \eqref{der_est} hold true with $p=2$. Then,
for $p>2$, $h,k=0,1,2$, with $h\le k$, $t>0$ and $\f\in C^h_b(\Rd;\Rm)$, we can estimate
\begin{align*}
|D^k_x\T(t)\f|^p=(|D^k_x\T(t)\f|^2)^{\frac{p}{2}}\le &
\bigg (\Gamma_{2,k,h}(t)T(t)\sum_{j=0}^h|D^j\f|^2\bigg )^{\frac{p}{2}}\\
\le &(\Gamma_{2,k,h}(t))^{\frac{p}{2}}T(t)\bigg (\sum_{j=0}^h|D^j\f|^2\bigg )^{\frac{p}{2}}.
\end{align*}

We can also just consider functions in $C^{\infty}_c(\Rd;\Rm)$. Once \eqref{der_est} is proved for such smooth functions,
we can use a density argument to extend its validity to any $\f\in C^h_b(\Rd;\Rm)$. Indeed, for any $\f\in C^h_b(\Rd;\Rm)$ there exists a sequence
$(\f_n)\subset C^{\infty}_c(\Rd;\Rm)$, bounded with respect to the $C^h_b(\Rd;\Rm)$-norm and converging to $\f$ in $C^h_b(B_r;\Rm)$ for any $r>0$.
Writing \eqref{der_est} with $\f$ being replaced by $\f_n$ and using \cite[Proposition 3.2]{AALT}, we can let $n$ tend to $+\infty$ and obtain \eqref{der_est} in 
its full generality. 

Hence, in the proof of Theorems \ref{thm-avvvooooccato} and \ref{thm-4.4} we will assume that $p\in (1,2]$ and $\f\in C^{\infty}_c(\Rd;\Rm)$ without further mentioning it.

We finally observe that, for any $p\in (1,2]$, $A,B>0$, it holds that
\begin{align}
\min_{x>0}\left (Ax^{\frac{2}{p-2}}+Bx^{\frac{2}{p}}\right )=\left (\frac{2-p}{p}\right )^{\frac{p-2}{2}}\frac{2}{p}A^{1-\frac{p}{2}}B^{\frac{p}{2}}.
\label{maggiordomo}
\end{align}
\begin{thm}
\label{thm-avvvooooccato}
Under Hypotheses $\ref{sandro}$, estimate \eqref{der_est} holds true, with $k=1$ and $\Gamma_{p,1,h}(t)=\gamma_{h,p} e^{C_{h,p}t}t^{\frac{p}{2}(h-1)}$ for any $t>0$, where
$\gamma_{h,p}$ and $C_{h,p}$ are positive constants for any $h=0,1$.
\end{thm}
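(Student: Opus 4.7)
The plan is a Bernstein-type gradient estimate. By density and the reductions discussed just before the statement, I may assume $\f\in C_c^\infty(\Rd;\Rm)$ and $p\in(1,2]$. Setting $\uu=\T(\cdot)\f$, I introduce the auxiliary scalar function
\[
v(t,x)=\bigl(|\uu(t,x)|^2+\delta(t)|D_x\uu(t,x)|^2\bigr)^{p/2},
\]
with $\delta\equiv 1$ when $h=1$ and $\delta(t)=t$ when $h=0$. The target is a differential inequality $(D_t-\mathcal{A})v\le Kv$ on $(0,T]\times\Rd$ for a suitable $K=K(p,h)$; once this is available, the scalar parabolic maximum principle for $\mathcal{A}$, together with the Lyapunov function $\varphi$ from Hypothesis \ref{hyp-base}(iv), produces $v(t,\cdot)\le e^{Kt}T(t)v(0,\cdot)$. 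Since $v\ge\delta(t)^{p/2}|D_x\uu|^p$ pointwise and $v(0,\cdot)\le(|\f|^2+\delta(0)|D\f|^2)^{p/2}$, dividing by $\delta(t)^{p/2}$ (which contributes the singular factor $t^{-p/2}$ exactly when $h=0$) yields \eqref{der_est} with the claimed $\Gamma_{p,1,h}$.

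To make the maximum principle argument rigorous on the whole of $\Rd$, I would perform the Bernstein computation first for the Neumann approximants $\vv_n$ defined by \eqref{prob_approx_Neu}: the homogeneous Neumann condition produces a boundary term of the correct sign for the corresponding $v_n$, and classical comparison on $[0,T]\times\overline B_n$ applies. The local convergence $\vv_n\to\uu$ in $C^{1,2}_{\rm loc}$ granted by Theorem \ref{thm-A2} and Proposition \ref{prop-rossana} then transfers the pointwise bound to $\uu$ in the limit $n\to+\infty$.

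The technical heart is the verification of $(D_t-\mathcal{A})v\le Kv$. Writing $v=g^{p/2}$ with $g=|\uu|^2+\delta|D_x\uu|^2$, the chain rule gives
\[
(D_t-\mathcal{A})v=\tfrac{p}{2}g^{\frac{p-2}{2}}(D_t-\mathcal{A})g-\tfrac{p(p-2)}{4}g^{\frac{p-4}{2}}\langle Q\nabla g,\nabla g\rangle.
\]
Differentiating $D_t\uu=\A\uu$ once in space and expanding $(D_t-\mathcal{A})g$, the ``good'' contributions $-2\mathscr{F}_{0,\uu}^2-2\delta\mathscr{F}_{1,\uu}^2$ emerge, while the remaining terms are: the $C$-quadratic forms $\langle C\uu,\uu\rangle+\delta\sum_i\langle CD_i\uu,D_i\uu\rangle\le 0$ by Hypothesis \ref{hyp-base}(iii); $2\delta\mathscr{B}_{1,\uu}\le 2\delta r|D_x\uu|^2$; the cross terms involving $DQ$ and $DC$, controlled via Young's inequality in terms of $\mathscr{Q}_1^2|D_x\uu|^2$, $\mathscr{C}_1^2|D_x\uu|^2$ and small multiples of $\mathscr{F}_{1,\uu}^2$; and a $\delta'(t)|D_x\uu|^2$ piece when $h=0$. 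Combining these with the $p<2$ sign of the Hessian term via the optimisation identity \eqref{maggiordomo} produces $(D_t-\mathcal{A})v\le Kv$ with $K$ bounded in terms of the quantity $K_p$ in Hypothesis \ref{sandro}(ii), enlarged by an $O(1)$ amount coming from $\delta'$ when $h=0$.

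The main obstacle is precisely this algebraic bookkeeping: the Young parameters have to be tuned so that the final coefficient of $|D_x\uu|^2$ on the right-hand side is dominated by $\sup_{\Rd}(r+(1-p)\mu_Q+(4(p-1)\mu_Q)^{-1}\mathscr{Q}_1^2+c_p\mathscr{C}_1^2)$, which is finite by Hypothesis \ref{sandro}(ii). Once $K$ is pinned down, the constants $\gamma_{h,p}$ and $C_{h,p}$ in the statement emerge from $K$, from the comparison with $T(t)v(0,\cdot)$, and from the power $\delta(t)^{-p/2}$ in the case $h=0$.
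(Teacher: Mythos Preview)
Your treatment of the case $h=1$ is exactly the paper's Step~1: Bernstein function $w_n=(|\vv_n|^2+|J_x\vv_n|^2+\varepsilon)^{p/2}$ on the Neumann ball, differential inequality $D_tw_n-\mathcal{A}w_n\le C_{1,p}w_n$ via the Cauchy--Schwarz bound on the Hessian term, comparison with $T_n^{\mathcal N}(t)$, and passage to the limit. (Two minor slips: in the paper's notation the drift term arising from one spatial differentiation is $\mathscr{B}_{0,\uu}$, not $\mathscr{B}_{1,\uu}$; and the identity~\eqref{maggiordomo} is not used in this step---the Hessian term is controlled by the direct bound $\psi_3\le w_n^{2/p}(\mathscr{F}_{0,\uu}^2+\mathscr{F}_{1,\uu}^2)$.)

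For $h=0$ your route is genuinely different. The paper does \emph{not} use a time-weighted Bernstein function; instead it writes $|J_x\T(t)\f|^p=|J_x\T(t-\sigma)\T(\sigma)\f|^p$, applies the $h=1$ bound, splits the mixed $p$-power by H\"older/Young (this is where \eqref{maggiordomo} is actually used), integrates in $\sigma$, and closes with the auxiliary inequality $\int_0^tT(t-\sigma)\big(|J_x\T(\sigma)\f|^2(|\T(\sigma)\f|^2+\delta)^{\frac{p-2}{2}}\big)d\sigma\le k_pT(t)(|\f|^2+\delta)^{p/2}$, obtained by differentiating $\sigma\mapsto T_n^{\mathcal N}(t-\sigma)(|\vv_n(\sigma,\cdot)|^2+\delta)^{p/2}$. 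Your approach, inserting a time factor directly into the Bernstein function, is more economical and is a well-known alternative in the scalar setting.

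There is, however, a gap in your $h=0$ argument as written. With $\delta(t)=t$ the term $\delta'(t)|D_x\uu|^2=|D_x\uu|^2$ carries \emph{no} $t$-factor, so it cannot be placed on the right-hand side as part of $K\,t|D_x\uu|^2$. The only negative contribution without a $t$-factor is $-2(p-1)\mathscr{F}_{0,\uu}^2\le -2(p-1)\mu_0|D_x\uu|^2$ (coming from $(D_t-\mathcal{A})|\uu|^2$ after the Hessian correction), and this absorbs $|D_x\uu|^2$ only when $2(p-1)\mu_0>1$, which fails for $p$ close to $1$. The fix is easy: take $\delta(t)=\varepsilon t$ with $0<\varepsilon<2(p-1)\mu_0$; then $\delta'=\varepsilon$ is absorbed for every fixed $p\in(1,2]$, and the extra factor $\varepsilon^{-p/2}$ is harmlessly absorbed into $\gamma_{0,p}$. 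One also has to note that the cross term $\mathscr{C}_1|\uu||D_x\uu|$ produces, after Young, a piece $\frac{\varepsilon t}{4c_p}|\uu|^2$; this forces the constant $K$ to depend on $T$ if one works on all of $(0,+\infty)$, but restricting the Bernstein argument to $t\in(0,1]$ and using $|J_x\T(t)\f|^p=|J_x\T(1)\T(t-1)\f|^p$ together with \eqref{vector-scalar} for $t>1$ recovers the stated global form $\Gamma_{p,1,0}(t)=\gamma_{0,p}e^{C_{0,p}t}t^{-p/2}$.
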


\begin{proof}
We fix $p$, $\f$ and consider the solution $\vv_n$ to problem \eqref{prob_approx_Neu} and the positive semigroup $T_n^{\mathcal N}(t)$
associated to the realization of $\mathcal{A}$ in $C_b(\overline B_n)$ with homogeneous Neumann
boundary conditions. We split the proof into two steps. In the first one we prove \eqref{der_est} with $h=1$, in the second one we deal with the case $h=0$.

{\it Step 1.} To prove \eqref{der_est} with $h=k=1$ it suffices to show that
\begin{equation}\label{stima_grad1_appr}
|J_x \vv_n(t,\cdot)|^p \le e^{C_{1,p}t}T_n^{\mathcal N}(t)(|\f|^2+|J\f|^2)^{\frac{p}{2}}
\end{equation}
for any $t>0$, $n\in \N$ and some positive constant $C_{1,p}$. Indeed, once \eqref{stima_grad1_appr} is proved, estimate \eqref{der_est} with $h=k=1$,
($\gamma_{1,p}=1$ and $C_{1,p}=C_p$) will follow simply letting $n\to +\infty$, recalling that, for any $t>0$, $T_n^{\mathcal N}(t)f$ converges to $T(t)f$, as $n\to +\infty$,
locally uniformly in $\Rd$ (see Section \ref{sect:2}).
So, let us prove \eqref{stima_grad1_appr}. For any $\varepsilon >0$ and $n\in \N$, let us consider the function
$w_n= (|\vv_n|^2+|J_x \vv_n|^2+\varepsilon)^{\frac{p}{2}}$.
From \cite[Chapter 7, Section 10]{LadSolUra68Lin} it follows that
$w_n \in C^{1,2}([0,+\infty)\times\overline B_n)\cap C_b([0,T]\times \overline B_n)$ for any $T>0$. Moreover, $w_n$ solves the problem
\begin{equation}\label{pro_scal}
\left\{\begin{array}{ll}
D_t w_n- \mathcal Aw_n=pw_n^{1-\frac{2}{p}}g, \quad\,\,  &{\rm in}~(0,+\infty)\times B_n,\\[1mm]
\displaystyle\frac{\partial w_n}{\partial \nu} \le 0,\quad\,\,  &{\rm on}~(0,+\infty)\times \partial B_n,\\[2.5mm]
w_n(0)= (|\f|^2+|J \f|^2+\varepsilon)^{\frac{p}{2}}, \quad\,\,  &{\rm in}\,~ B_n,
\end{array}
\right.
\end{equation}
where $g=\psi_1+\psi_2+(2-p)w_n^{-\frac{2}{p}}\psi_3$, with
\begin{align*}
\psi_1= &{\mathscr B}_{0,\vv_n}+ \langle C\vv_n,\vv_n \rangle+\sum_{i=1}^d\langle CD_i \vv_n,D_i \vv_n \rangle
-{\mathscr F}_{0,\vv_n}^2-{\mathscr F}_{1,\vv_n}^2,
\\[1mm]
\psi_2= &\sum_{i,j,h=1}^d\sum_{k=1}^m D_hq_{ij}D_{ij} v_{n,k}D_hv_{n,k}+\sum_{h=1}^d\sum_{j,k=1}^m D_hC_{jk}v_{n,k}D_h v_{n,j},\\[1mm]
\psi_3=&\sum_{i,j=1}^dq_{ij}\bigg (\langle\vv_n,D_i\vv_n\rangle\!+\!\sum_{h=1}^d\langle D_{ih}\vv_n,D_h\vv_n\rangle\bigg )\!
\bigg (\langle\vv_n,D_j\vv_n\rangle\!+\!\sum_{\ell=1}^m\langle D_{j\ell}\vv_n,D_{\ell}\vv_n\rangle\bigg )
\end{align*}
and the boundary condition in \eqref{pro_scal} follows since the normal derivative of
$|\nabla_x v_{n,k}|^2$ is nonpositive in $(0,+\infty)\times\partial B_n$
for any $k=1, \ldots,m$ (see e.g., \cite{BerFor04Gra}).

Using Hypothesis \ref{hyp-base}(iii), the Cauchy-Schwarz and the Young's inequalities, we estimate the functions $\psi_i$ ($i=1,2,3$) as follows:
\begin{align*}
\psi_1 \le& r|J_x \vv_n|^2-{\mathscr F}_{0,\vv_n}^2-{\mathscr F}_{1,\vv_n}^2,\\[1mm]
\psi_2 \le& {\mathscr Q}_1|J_x \vv_n||D^2_x\vv_n|+ {\mathscr C}_1|\vv_n||J_x \vv_n|
\\ \le& \frac{1}{4c_p}|\vv_n|^2+\bigg(\frac{1}{4(p-1)\mu_Q}{\mathscr Q}_1^2+c_p{\mathscr C}_1^2\bigg)|J_x \vv_n|^2
+(p-1)\mu_Q|D^2_x\vv_n|^2,\\[1mm]
\psi_3\le  &\bigg(\sum_{k=1}^m|v_{n,k}||Q^{\frac{1}{2}}\nabla_xv_{n,k}|\bigg)^2+2\sum_{h,k=1}^m|v_{n,h}||Q^{\frac{1}{2}}\nabla_xv_{n,h}|
\sum_{i=1}^d|D_iv_{n,k}||Q^{\frac{1}{2}}\nabla_xD_iv_{n,k}|\\
&+\sum_{i,j=1}^d\sum_{h,k=1}^m|D_iv_{n,h}||D_jv_{n,k}||Q^{\frac{1}{2}}\nabla_xD_iv_{n,h}||Q^{\frac{1}{2}}\nabla_xD_jv_{n,k}|\\
\le & {\mathscr F}_{0,\vv_n}^2|\vv_n|^2+{\mathscr F}_{1,\vv_n}^2|J_x \vv_n|^2
+2{\mathscr F}_{0,\vv_n}{\mathscr F}_{1,\vv_n}|\vv_n||J_x\vv_n|\\
\le& w_n^{\frac{2}{p}}({\mathscr F}_{0,\vv_n}^2+{\mathscr F}_{1,\vv_n}^2),
\end{align*}
where $c_p$ is the constant in Hypothesis \ref{sandro}(ii).
Putting everything together, we get
\begin{align*}
g\le \frac{1}{4c_p}|\vv_n|^2+ \bigg(r+(1-p)\mu_Q+\frac{1}{4(p-1)\mu_Q}{\mathscr Q}_1^2+c_p{\mathscr C}_1^2\bigg)|J_x \vv_n|^2.
\end{align*}

Using Hypothesis \ref{sandro}(ii) we conclude that $D_t w_n- {\mathcal A}w_n\le C_{1,p}w_n$ in $\Rd$
where $C_{1,p}$ is the maximum between $(4c_p)^{-1}$ and $K_p$. Hence, the function
$z_n(t,\cdot)= w_n(t,\cdot)- e^{C_{1,p}t}T_n^{\mathcal N}(t)(|\f|^2+|J\f|^2+\varepsilon)^{p/2}$ vanishes on $\{0\}\times B_n$,
satisfies the differential inequality
$D_t z_n-{\mathcal A}z_n-C_{1,p}z_n \le 0$ in $(0,+\infty)\times B_n$ and its normal derivative is
nonpositive on $(0,+\infty)\times \partial B_n$.
The classical maximum principle yields that $z_n\le 0$ in $(0,+\infty)\times B_n$,
whence, letting $\varepsilon \to 0^+$, estimate \eqref{stima_grad1_appr} follows at once.

{\em Step 2.} Now, we prove \eqref{der_est} with $k=1$ and $h=0$. Fix $t>0$. From \eqref{vector-scalar}, the previous step, the semigroup law and recalling that $(a+b)^{p/2}\le a^{p/2}+b^{p/2}$ for any $a,b\ge 0$, it follows that
\begin{align*}
|J_x\T(t)\f|^p&=|J_x \T(t-\sigma)\T(\sigma)\f|^p\\
&\le  e^{C_{1,p}(t-\sigma)}T(t-\sigma)[|\T(\sigma)\f|^p+|J_x\T(\sigma)\f|^p]\\
& \le   e^{C_{1,p} (t-\sigma)}\left[T(t)|\f|^p+T(t-\sigma)|J_x\T(\sigma)\f|^p\right]
\end{align*}
for any $\sigma \in (0,t)$. Formula \eqref{rep_int} and the H\"older inequality yield
\begin{align}
T(t-\sigma)&|J_x\T(\sigma)\f|^p
=T(t-\sigma)[|J_x\T(\sigma)\f|^p(|\T(\sigma)\f|^2\!+\!\delta)^{\frac{p(p-2)}{4}}
(|\T(\sigma)\f|^2\!+\!\delta)^{\frac{p(2-p)}{4}}]\notag\\
\le&\Big (T(t-\sigma)(|J_x\T(\sigma)\f|^2(|\T(\sigma)\f|^2+\delta)^{\frac{p-2}{2}})\Big)^{\frac{p}{2}}
\left(T(t-\sigma)(|\T(\sigma)\f|^2+\delta)^{\frac{p}{2}}\right)^{\frac{2-p}{2}}\notag\\
\le& \varepsilon^{\frac{2}{p}}\frac{p}{2}T(t-\sigma)\Big (|J_x \T(\sigma)\f|^2(|\T(\sigma)\f|^2+\delta)^{\frac{p-2}{2}}\Big )\notag\\
&+\left (1-\frac{p}{2}\right )\varepsilon^{\frac{2}{p-2}}T(t-\sigma)(|\T(\sigma)\f|^2+\delta)^{\frac{p}{2}}
\label{gianni}
\end{align}
for any $\varepsilon, \delta >0$, whence
\begin{align*}
e^{-C_{1,p}(t-\sigma)}|J_x\T(t)\f|^p \le &
T(t)|\f|^p+\left(1-\frac{p}{2}\right)\varepsilon^{\frac{2}{p-2}}T(t-\sigma)(|\T(\sigma)\f|^2+\delta)^{\frac{p}{2}}\nonumber
\\
&+\frac{p}{2}\,
\varepsilon^{\frac{2}{p}}T(t-\sigma)\left(|J_x\T(\sigma)\f|^2(|\T(\sigma)\f|^2+\delta)^{\frac{p-2}{2}}\right).
\end{align*}
Integrating the previous estimate with respect to $\sigma \in (0,t)$, we deduce
\begin{align}\label{pre}
|J_x\T(t)\f|^p \le \frac{C_{1,p}}{1\!-\!e^{-C_{1,p}t}}
\bigg\{&t T(t)|\f|^p\!+\!\bigg(1-\frac{p}{2}\bigg )\varepsilon^{\frac{2}{p-2}}
\int_0^tT(t-\sigma)(|\T(\sigma)\f|^2\!+\!\delta)^{\frac{p}{2}}d\sigma
\nonumber\\
&+\frac{p}{2}\varepsilon^{\frac{2}{p}}\int_0^tT(t-\sigma)
\left(|J_x \T(\sigma)\f|^2(|\T(\sigma)\f|^2\!+\!\delta)^{\frac{p-2}{2}}\right) d\sigma\bigg\}.
\end{align}
To prove the claim, we just need to show that there exists a positive constant $k_p$ such that
\begin{equation}\label{aim_int}
\int_0^tT(t-\sigma)
\left(|J_x \T(\sigma)\f|^2(|\T(\sigma)\f|^2+\delta)^{\frac{p-2}{2}}\right) d\sigma
 \le k_p T(t)(|\f|^2+\delta)^{\frac{p}{2}}
\end{equation}
for any $t>0$. Indeed, once \eqref{aim_int} is proved, replacing it into \eqref{pre},
letting $\delta \to 0^+$ (see \cite[Proposition 1.2.10]{newbook}), using again \eqref{vector-scalar}
and minimizing with respect to $\varepsilon>0$ (taking \eqref{maggiordomo} into account), we deduce that
\begin{align*}
|J_x\T(t)\f|^p \le &\frac{C_{1,p}}{1-e^{-C_{1,p}t}}\left\{\left[1+\bigg(1-\frac{p}{2}\bigg)
\varepsilon^{\frac{2}{p-2}}\right]t+ \frac{p}{2}\varepsilon^{\frac{2}{p}}k_p\right\} T(t)|\f|^p\\
 \le &\frac{C_{1,p}}{1-e^{-C_{1,p}t}}(t+k_p^{\frac{p}{2}}t^{1-\frac{p}{2}})T(t)|\f|^p,
\end{align*}
whence \eqref{der_est} with $k=1$ and $h=0$ follows.

We prove the above inequality with $\T(t)$ and $T(t)$ being replaced by $\T_n^{\mathcal N}(t)$ and $T^{\mathcal N}_n(t)$, respectively. Letting $n$ tend to $+\infty$, \eqref{aim_int} will follow at once.
We set
\begin{equation*}
\psi_n(\sigma,\cdot)=T_n^{\mathcal N}(t-\sigma)(|\vv_n(\sigma,\cdot)|^2+\delta)^{\frac{p}{2}}
=: T_n^{\mathcal N}(t-\sigma)v_n(\sigma, \cdot)
\end{equation*}
for any $\sigma \in [0,t]$ and $n \in \N$.
Since the normal derivative of the function $v_n(\sigma,\cdot)$ vanishes on $\partial B_n$ for any $\sigma\in (0,t)$,
$v_n(\sigma,\cdot)$ belongs to the domain of the generator of $T_n^{\mathcal N}(t)$ in $C_b(\overline B_n)$ for any $\sigma\in (0,t)$. Hence, $\psi_n$ is differentiable in $(0,t)$
\begin{align*}
\psi'_n&= T_n^{\mathcal N}(t-\cdot)(D_\sigma v_n-\mathcal{A} v_n)
\\
&=pT_n^{\mathcal N}(t-\cdot)\bigg[v_n^{1-\frac{2}{p}}\bigg (\langle C\vv_n,\vv_n\rangle-\sum_{i,j=1}^dq_{ij}\langle D_i\vv_n,D_j\vv_n\rangle\bigg )
\\
&\qquad\qquad\qquad\;\;\,+ (2-p)v_n^{1-\frac{4}{p}}\sum_{i,j=1}^d q_{ij}\langle \vv_n, D_i \vv_n\rangle \langle \vv_n, D_j \vv_n\rangle\bigg].
\end{align*}
Applying the same arguments as in Step 1, we deduce that
\begin{equation}\label{giampaolo}
\psi'_n(\sigma) \le p(1-p)\mu_0 T_n^{\mathcal N}(t-\sigma)[(v_n(\sigma,\cdot))^{1-\frac{2}{p}}|J_x\vv_n(\sigma,\cdot)|^2]
\end{equation}
for any $\sigma\in (0,t)$.
Thus \eqref{aim_int} follows with $k_p=[p(p-1)\mu_0]^{-1}$, simply integrating both sides of \eqref{giampaolo} with respect to $\sigma$ in $[h,t-h]$ and then letting $n\to+\infty$ and
$h\to 0^+$. The proof is so complete.
\end{proof}

Estimate \eqref{der_est} with $k=2$ is more involved and, as it has been already pointed out, it requires stronger assumptions on the coefficients of the operator $\bf{\A}$.

\begin{hyp}\label{sandro-2}
\begin{enumerate}[\rm (i)]
\item The coefficients $q_{ij}$, $b_i$ and the entries $c_{hk}$ of the matrix valued function $C$ belong to $C^{2+\alpha}_{\rm{loc}}(\Rd)$ for any $i,j=1,\ldots, d$ and $h,k=1, \ldots,m$;
\item there exist two positive constants $c_1,c_2$ such that for any $x\in \Rd$:
\begin{align}\label{dibala}
&|Q(x)|\le c_1(1+|x|^2)\mu_Q(x),\qquad\;\,|Q(x)x|\le c_1(1+|x|^2)\mu_Q(x),\\
&\langle b(x),x\rangle \le c_2(1+|x|^2)\mu_Q(x),
\label{pipita}
\end{align}

\item for any $p\in (1,+\infty)$, there exist positive constants $c_{jp}$ $(j=1,\ldots,6)$ such that
\begin{align*}
&K_{1p}:=\sup_{\Rd}\bigg (r-c_{3p}\mu_Q+c_{1p}{\mathscr C}_1^2+\frac{3}{2(p-1)\mu_Q}{\mathscr Q}_1^2+c_{2p}{\mathscr B}_2\bigg )<+\infty,\\
&K_{2p}:=\sup_{\Rd}\bigg (2r-c_{6p}\mu_Q+c_{4p}{\mathscr B}_2+c_{5p}{\mathscr C}_2^2+\frac{4}{(p-1)\mu_Q}{\mathscr Q}_1^2+{\mathscr Q}_2\bigg )<+\infty.
\end{align*}
where $r$ has been defined in Hypotheses $\ref{sandro}$.
%\item for any $p\in (1,+\infty)$, there exist two positive constants $c_{3p}$ and $c_{4p}$ such that
%\begin{equation}\label{mensa}
%r+c_{3p}\sum_{i=1}^d |D_i C|^2+\frac{1}{2(p-1)\mu_Q}\sum_{i=1}^d |D_i Q|^2\le c_{4p}\mu_Q
%\end{equation}
\end{enumerate}
\end{hyp}

\begin{thm}
\label{thm-4.4}
Under Hypotheses $\ref{sandro-2}$, estimate \eqref{der_est} holds true with $k=2$ and $\Gamma_{p,2,h}\sim \gamma'_{p,2,h}t^{-(2-h)p/2}$ as $t\to 0^+$ for some positive constant $\gamma'_{p,2,h}$ and $h=0,1,2$.
\end{thm}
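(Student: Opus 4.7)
The plan is to mimic the two-step structure of Theorem \ref{thm-avvvooooccato}: first establish the case $h=k=2$ by a direct Bernstein-type computation, then obtain the cases $h=1,0$ by iterating the semigroup law combined with a H\"older--Young splitting. As in that proof, it is enough to consider $p\in (1,2]$ and $\f\in C^\infty_c(\Rd;\Rm)$; the convexity of $s\mapsto s^{p/2}$ extends the result to $p>2$, and a density argument via \cite[Proposition 3.2]{AALT} (as recorded before Theorem \ref{thm-avvvooooccato}) extends it from test functions to $\f\in C^h_b(\Rd;\Rm)$.

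For $h=k=2$ I would apply the Bernstein method to
\begin{equation*}
w_n=\bigl(|\vv_n|^2+|J_x\vv_n|^2+|D^2_x\vv_n|^2+\varepsilon\bigr)^{p/2},
\end{equation*}
where $\vv_n$ approximates $\T(\cdot)\f$. Because $\partial_\nu|D^2_x\vv_n|^2$ is not a priori nonpositive on $\partial B_n$ (the convexity-based argument of \cite{BerFor04Gra} only covers first derivatives), I would work with the Dirichlet approximation $\uu_n$ from \eqref{prob_approx_Dir} multiplied by a spatial cutoff built from $\varphi=1+|x|^2$, and rely on the growth bounds \eqref{dibala}--\eqref{pipita} to control all the terms produced by the cutoff uniformly in $n$. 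Differentiating $w_n$ produces a scalar equation of the form $D_tw_n-\mathcal Aw_n=pw_n^{1-2/p}g_n$, where $g_n$ collects the three dissipative terms $-{\mathscr F}_{0,\vv_n}^2-{\mathscr F}_{1,\vv_n}^2-{\mathscr F}_{2,\vv_n}^2$, the nonpositive pieces $\langle CD^\alpha\vv_n,D^\alpha\vv_n\rangle$ for $|\alpha|\le 2$, and cross terms involving $Q$, ${\bf b}$ and $C$ and their derivatives up to the second order. Cauchy--Schwarz and Young's inequalities combined with Hypothesis \ref{sandro-2}(iii), whose constants $c_{jp}$ are tailored precisely for this absorption, yield the pointwise bound $g_n\le K_p\,w_n^{2/p}$ with $K_p$ a suitable maximum of $K_{1p}$, $K_{2p}$ and an inverse power of the $c_{jp}$'s. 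The classical parabolic maximum principle, followed by $\varepsilon\downarrow 0$ and $n\uparrow+\infty$, produces \eqref{der_est} with $(k,h)=(2,2)$ and $\Gamma_{p,2,2}(t)=\gamma_{2,p}e^{K_pt}$, which is indeed bounded near $t=0$.

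For the cases $h=1$ and $h=0$ I would iterate on the semigroup law $\T(t)=\T(t-\sigma)\T(\sigma)$ exactly as in Step 2 of Theorem \ref{thm-avvvooooccato}. The case $h=k=2$ just established gives
\begin{equation*}
|D^2_x\T(t)\f|^p\le e^{K_p(t-\sigma)}T(t-\sigma)\bigl(|\T(\sigma)\f|^2+|J_x\T(\sigma)\f|^2+|D^2_x\T(\sigma)\f|^2\bigr)^{p/2},
\end{equation*}
and $(a+b+c)^{p/2}\le a^{p/2}+b^{p/2}+c^{p/2}$ splits the right-hand side into three summands. The first is controlled by $T(t)|\f|^p$ via \eqref{vector-scalar}, and the second by Theorem \ref{thm-avvvooooccato}. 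For the third I would reproduce verbatim the trick \eqref{gianni}: write
\begin{equation*}
|D^2_x\T(\sigma)\f|^p=|D^2_x\T(\sigma)\f|^p(|\T(\sigma)\f|^2+|J_x\T(\sigma)\f|^2+\delta)^{\frac{p(p-2)}{4}}(|\T(\sigma)\f|^2+|J_x\T(\sigma)\f|^2+\delta)^{\frac{p(2-p)}{4}},
\end{equation*}
apply H\"older with conjugate exponents $2/p$ and $2/(2-p)$, then split with Young's inequality in a parameter $\varepsilon$, and integrate over $\sigma\in(0,t)$. A scalar energy identity of the type \eqref{giampaolo}, applied to $\psi_n(\sigma)=T^{\mathcal N}_n(t-\sigma)(|\vv_n|^2+|J_x\vv_n|^2+\delta)^{p/2}$, shows that $\int_0^t T(t-\sigma)\bigl[|D^2_x\T(\sigma)\f|^2(\cdots)^{(p-2)/2}\bigr]\,d\sigma$ is bounded by a multiple of $T(t)(|\f|^2+|D\f|^2+\delta)^{p/2}$. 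Minimizing in $\varepsilon$ via \eqref{maggiordomo} and sending $\delta\downarrow 0$, $n\uparrow+\infty$, one obtains \eqref{der_est} with $(k,h)=(2,1)$ and $\Gamma_{p,2,1}(t)\sim t^{-p/2}$ as $t\to 0^+$. Iterating the same scheme once more, but now invoking Theorem \ref{thm-avvvooooccato} with $(k,h)=(1,0)$ to generate a second factor $t^{-p/2}$, yields the case $h=0$ with $\Gamma_{p,2,0}(t)\sim t^{-p}$.

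The main obstacle I anticipate is the bookkeeping in the Bernstein computation for $w_n$ in the case $h=k=2$: because of the inclusion of $|D^2_x\vv_n|^2$, the scalar equation for $w_n$ generates a long list of cross terms containing first and second derivatives of $Q$, ${\bf b}$ and $C$, as well as products such as $(J{\bf b})D^2_x\vv_n\cdot D^2_x\vv_n$ and $(D^2C)\vv_n\cdot D^2_x\vv_n$; each must be absorbed against the three dissipative quantities ${\mathscr F}_{j,\vv_n}^2$ ($j=0,1,2$) and against the term $(p-1)\mu_Q|D^3_x\vv_n|^2$ coming from the action of $\mathcal A$ on $|D^2_x\vv_n|^2$. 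This absorption procedure is exactly what dictates the somewhat intricate structure of $K_{1p}$ and $K_{2p}$ in Hypothesis \ref{sandro-2}(iii). A secondary, purely technical difficulty is the boundary handling on $\partial B_n$, where the convexity argument of \cite{BerFor04Gra} fails at the second-derivative level; the growth conditions \eqref{dibala}--\eqref{pipita} are what allow the $\varphi$-cutoff in the Dirichlet approximation to be used without contaminating the final estimate.
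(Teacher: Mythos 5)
Your three-step skeleton (Bernstein for $h=k=2$, H\"older--Young iteration for $h=1$, semigroup composition for $h=0$) matches the paper's, and your Step 3 and the broad outline of Step 2 are in the right direction. However, there is a genuine gap in your Step 1, and it is not merely bookkeeping: the functional
\begin{equation*}
w_n=\bigl(|\vv_n|^2+|J_x\vv_n|^2+|D^2_x\vv_n|^2+\varepsilon\bigr)^{p/2}
\end{equation*}
with unit weights will not close the absorption. The paper instead works with
\begin{equation*}
z_n=\bigl(|\uu|^2+\alpha\,\vartheta_n^2\,|J_x\uu|^2+\beta\,\vartheta_n^4\,|D^2_x\uu|^2+\delta\bigr)^{p/2},
\end{equation*}
where the hierarchy $0<\beta\ll\alpha\ll 1$ is the crux of the argument. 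The reason it is essential is that the second-order Bernstein computation produces mixed terms of the form $|D^2_x\uu|\,|J_x\uu|\cdot(\cdot)$ (e.g.\ from $D^2{\bf b}$, from $DQ$, and from the cutoff derivatives). After Young's inequality these contribute a piece proportional to $\beta\,|J_x\uu|^2$ that must be absorbed into the negative $\alpha\,\mu_Q|J_x\uu|^2$ dissipation, which forces $\beta/\alpha$ to be small; and the pieces proportional to $\alpha\,\mu_Q|J_x\uu|^2$ produced by the cutoff terms ($\psi_6$ in the paper's notation) must be absorbed into $\tfrac{1-p}{2}\mu_Q|J_x\uu|^2$, which forces $\alpha$ itself to be small. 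With your unit weights there is nothing to trade off against, and the bound $g\le K\,w_n^{2/p}$ fails. The distinct cutoff powers $\vartheta_n^2,\vartheta_n^4$ matched to the derivative order serve a similar purpose: each differentiation of the cutoff lowers its power by one, and the exponents are chosen so that every boundary-generated term still carries enough $\vartheta_n$ factors to be compared against the $\mathscr F_{j,\uu}$ dissipations and the $\mu_Q$-weighted squares.

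A few smaller remarks. The paper does not pass to the Dirichlet approximation $\uu_n$ in Step 1; it applies the Bernstein computation directly to the classical solution $\uu=\T(\cdot)\f$ on the whole space (justified via Theorem \ref{teo-app}) and localizes with the cutoff $\vartheta_n(x)=\vartheta(|x|/n)$; your Dirichlet-plus-cutoff route could work but is less economical. Also, the cutoff is a standard compactly supported $\vartheta_n$, not ``built from $\varphi$''; the growth conditions \eqref{dibala}--\eqref{pipita} are used only to obtain the uniform-in-$n$ bounds $|Q^{1/2}\nabla\vartheta_n|^2\le M_1\mu_Q$ and $-\mathcal A\vartheta_n\le M_2\mu_Q$. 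Finally, in Step 2 the paper proves the integral estimate \eqref{cinema} with a Dirichlet approximation $T_n^{\mathcal D}$ (so that $u_n(\sigma,\cdot)-\delta^{p/2}$ vanishes on $\partial B_n$), again carrying the weight $\alpha$ and the factor $\vartheta_n^2$; your outline with $T_n^{\mathcal N}$ and no weights should be adapted accordingly, and note that \eqref{cinema} also keeps a residual term $\int_0^t T(t-\sigma)u(\sigma,\cdot)\,d\sigma$ that has to be reabsorbed, which you omit.
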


\begin{proof}
We split the proof into three steps. In the first one we prove the claim with $h=2$.
Next we consider the case $h=1$ and, finally, $h=0$.

{\em Step 1.}
For $\alpha,\beta$, $\delta\in (0,1)$, $n\in\N$,
we define the function $z_n:[0,+\infty)\times\overline B_n\to\R$ by setting
$z_n(t,x)=(|\uu(t,x)|^2+\alpha\vartheta^2_n|J_x\uu(t,x)|^2+\beta\vartheta_n^4|D^2_x\uu(t,x)|^2+\delta)^{p/2}$, where
$\uu$ is the classical solution to the Cauchy problem \eqref{cantare}, $\vartheta_n(x)=\vartheta(|x|/n)$ for any $x\in\R^d$, $n\in\N$ and $\vartheta$ is a smooth function
such that $\chi_{(-1,1)}\le\vartheta\le\chi_{(-2,2)}$.

By Theorem \ref{teo-app}, $z_n\in C^{1,2}((0,+\infty)\times \Rd)\cap C_b([0,+\infty)\times \Rd)$ and
$D_t z_n-\mathcal A z_n= p z_n^{1-\frac{2}{p}}g$ in $(0,+\infty)\times \Rd$, where $g=\sum_{i=0}^6\psi_i+\frac{2-p}{4}z_n^{-\frac{2}{p}}|Q^{\frac{1}{2}}\zeta_n|^2$,
\begin{align*}
\psi_0=& \langle C\uu,\uu\rangle+\alpha \vartheta_n^2\sum_{i=1}^d \langle CD_i \uu,D_i\uu\rangle+\beta \vartheta_n^4\sum_{i,j=1}^d\langle C D_{ij}\uu,D_{ij}\uu\rangle
+\alpha \vartheta_n^2{\mathscr B}_{0,\uu},\\
\psi_1=&\alpha\vartheta_n^2\sum_{i,j,h=1}^d\sum_{k=1}^m D_hq_{ij}D_{ij} u_kD_hu_k+\alpha\vartheta_n^2\sum_{i=1}^d\sum_{k,s=1}^m D_i c_{ks}u_sD_i u_k,\\
\psi_2=&2\beta\vartheta_n^4\sum_{i,j,h,\ell=1}^d\sum_{k=1}^mD_hq_{ij}D_{h\ell}u_kD_{ij\ell}u_k
+\beta\vartheta_n^4\sum_{i,j,h,\ell=1}^d\sum_{k=1}^mD_{h\ell}q_{ij}D_{ij}u_kD_{h\ell}u_k,\\
\psi_3=&2\beta\vartheta_n^4\sum_{i,j=1}^d\sum_{k,s=1}^mD_ic_{ks}D_ju_sD_{ij}u_k+\beta\vartheta_n^4\sum_{i,j=1}^d\sum_{k,s=1}^mD_{ij}c_{ks}u_sD_{ij}u_k,\\
\psi_4=&2\beta\vartheta_n^4{\mathscr B}_{1,\uu}+\beta\vartheta_n^4\sum_{i,j=1}^d\sum_{k,s=1}^mD_{ij}b_sD_{ij}u_kD_su_k,\\
\psi_5=& -{\mathscr F}_{0,\uu}^2-\alpha\vartheta_n^2{\mathscr F}_{1,\uu}^2-\beta\vartheta_n^4{\mathscr F}_{2,\uu}^2-\alpha|Q^{\frac{1}{2}}\nabla \vartheta_n|^2|J_x\uu|^2-6\beta\vartheta_n^2|Q^{\frac{1}{2}}\nabla\vartheta_n|^2|D^2_x\uu|^2,\\
\psi_6 =&-4\alpha\vartheta_n\sum_{k=1}^m\langle Q\nabla \vartheta_n, D^2_x u_k\nabla_x u_k\rangle-2\vartheta_n\mathcal A\vartheta_n(2^{-1}\alpha|J_x\uu|^2+\beta\vartheta_n^2|D^2_x\uu|^2)\\
&-8\beta\vartheta_n^3\sum_{i,j=1}^d\langle Q\nabla\vartheta_n,\nabla D_{ij}u_k\rangle D_{ij}u_k
\end{align*}
and $\zeta_n=\nabla_x(|\uu|^2+\alpha\vartheta_n^2|J_x\uu|^2+\beta\vartheta_n^4|D^2_x\uu|^2) $.
Using the hypotheses, the Cauchy-Schwartz and Young's inequalities, as in the proof of Theorem \ref{thm-avvvooooccato}, we estimate the terms $\psi_i$, $i=0,\ldots,4$.
Clearly, $\psi_0\le \alpha r \vartheta_n^2|J_x\uu|^2$. Moreover,
\begin{align*}
\psi_1
\le& \alpha\vartheta_n^2\frac{1}{2c_{1p}}|\uu|^2+\alpha\vartheta_n^2\bigg (\frac{3}{2(p-1)\mu_Q}{\mathscr Q}_1^2+\frac{c_{1p}}{2}{\mathscr C}_1^2\bigg )|J_x \uu|^2
+\alpha\vartheta_n^2\frac{p-1}{6}\mu_Q|D_x^2\uu|^2,\\[2mm]
\psi_2\le & \frac{p-1}{4}\beta\vartheta_n^4\mu_Q|D_x^3\uu|^2+\beta\vartheta_n^4\bigg (\frac{4}{(p-1)\mu_Q}{\mathscr Q}_1^2+{\mathscr Q}_2\bigg )|D_x^2 \uu|^2,\\[2mm]
\psi_3\le & \frac{1}{4c_{5p}}\beta\vartheta_n^4|\uu|^2+\beta c_{1p}\vartheta_n^4{\mathscr C}_1^2|J_x \uu|^2
+\beta\vartheta_n^4\bigg (\frac{1}{c_{1p}}+c_{5p}{\mathscr C}_2^2\bigg )|D_x^2\uu|^2,\\[2mm]
\psi_4\le& \beta\vartheta_n^4(2r+c_{4p}{\mathscr B}_2)|D_x^2\uu|^2+\frac{\beta}{4c_{4p}}\vartheta_n^4{\mathscr B}_2|J_x\uu|^2.
\end{align*}
Now, we observe that
\begin{align*}
\frac{1}{4}|Q^{\frac{1}{2}}\zeta_n|^2\!\le
&[{\mathscr F}_{0,\uu}|\uu|+\alpha\vartheta_n(\vartheta_n{\mathscr F}_{1,\uu}+|Q^{\frac{1}{2}}\nabla\vartheta_n||J_x\uu|)|J_x\uu|\\
&\;+\beta\vartheta_n^2(\vartheta_n^2{\mathscr F}_{2,\uu}+2\vartheta_n|Q^{\frac{1}{2}}\nabla\vartheta_n||D^2_x\uu|)|D^2_x\uu|]^2\\
\le &(|\uu|^2+\alpha\vartheta_n^2|J_x\uu|^2+\beta\vartheta_n^4|D^2_x \uu|^2)\\
&\;\times\![{\mathscr F}_{0,\uu}^2\!+\!\alpha(\vartheta_n{\mathscr F}_{1,\uu}\!+\!|J_x\uu||Q^{\frac{1}{2}}\!\nabla\vartheta_n|)^2\!+\!\beta(\vartheta_n^2\!{\mathscr F}_{2,\uu}\!+\!2\vartheta_n|D^2_x\uu||Q^{\frac{1}{2}}\!\nabla\vartheta_n|)^2]\\
\le &  z_n^{\frac{2}{p}}[{\mathscr F}_{0,\uu}^2+(1+\varepsilon)\alpha\vartheta_n^2{\mathscr F}_{1,\uu}^2
+(1+\varepsilon)\beta\vartheta_n^4{\mathscr F}_{2,\uu}^2\\
&\quad\;+\varepsilon^{-1}(1+\varepsilon)|Q^{\frac{1}{2}}\nabla\vartheta_n|^2(\alpha|J_x\uu|^2+4\beta\vartheta_n^2|D^2_x\uu|^2)],
\end{align*}
where we used the estimate $(a+b)^2\le (1+\varepsilon)(a^2+\varepsilon^{-1}b^2)$ which holds true for any $a,b,\varepsilon>0$. Consequently,
taking $\varepsilon=(p-1)[2(2-p)]^{-1}$, we get
\begin{align*}
\psi_5+\frac{2-p}{4}z_n^{-\frac{2}{p}}|Q^{\frac{1}{2}}\zeta_n|^2
\le &\frac{1-p}{2}(\mu_Q|J_x\uu|^2+\alpha\vartheta_n^2{\mathscr F}_{1,\uu}^2+\beta\vartheta_n^4{\mathscr F}_{2,\uu}^2)\\
&+\frac{(p^2-6p+7)^+}{p-1}|Q^{\frac{1}{2}}\nabla\vartheta_n|^2(\alpha|J_x\uu|^2+4\beta\vartheta_n^2|D^2_x\uu|^2).
\end{align*}
Since
$D_i\vartheta_n(x)=x_i(|x|n)^{-1}\vartheta'(n^{-1}|x|)$ for any $x\in\Rd$ and $i=1,\ldots,d$, applying \eqref{dibala} and Young's inequality we conclude that
$|Q^{\frac{1}{2}}\nabla\vartheta_n|^2\leq M_1\mu_Q$ in $\Rd$ for some positive constant $M_1$.
Hence, we obtain
\begin{align*}
\psi_5+\frac{2-p}{4}z_n^{-\frac{2}{p}}|Q^{\frac{1}{2}}\zeta_n|^2
\le &\frac{1-p}{2}(\mu_Q|J_x\uu|^2+\alpha\vartheta_n^2{\mathscr F}_{1,\uu}^2+\beta\vartheta_n^4{\mathscr F}_{2,\uu}^2)\\
&+M_1\frac{(p^2-6p+7)^+}{p-1}\mu_Q(\alpha|J_x\uu|^2+4\beta\vartheta_n^2|D^2_x\uu|^2).
\end{align*}
It remains to estimate $\psi_6$. As above, taking the choice of $\vartheta$, \eqref{dibala} and \eqref{pipita} into account, we deduce that
$-{\mathcal A}\vartheta_n\le M_2\mu_Q$ for some positive constant $M_2$.
It thus follows that
\begin{align*}
\psi_6\le & 4\alpha\vartheta_n|Q^{\frac{1}{2}}\nabla\vartheta_n|{\mathscr F}_{1,\uu}|J_x\uu|+2M_2\vartheta_n\mu_Q\bigg(\frac{\alpha}{2}|J_x\uu|^2+\beta\vartheta_n^2|D^2_x\uu|^2\bigg)\\
&+8\beta\vartheta_n^3|Q^{\frac{1}{2}}\nabla\vartheta_n||D^2_x\uu|{\mathscr F}_{2,\uu}\\
\le& \alpha\vartheta_n^2\frac{p-1}{4}{\mathscr F}_{1,\uu}^2+
\alpha \bigg (\frac{16}{p-1}M_1+\vartheta_nM_2\bigg )\mu_Q|J_x\uu|^2\\
&+\beta \vartheta_n^2\bigg (2M_2\vartheta_n+\frac{64}{p-1}M_1\bigg)\mu_Q|D^2_x\uu|^2
+\frac{p-1}{4}\beta\vartheta_n^4{\mathscr F}_{2,\uu}^2.
\end{align*}
Summing up we deduce that
\begin{align*}
g\le& K_{3p}|\uu|^2+\bigg\{\vartheta_n^2\bigg[\alpha\bigg (r+\frac{3}{2(p-1)\mu_Q}{\mathscr Q}_1^2+\frac{c_{1p}}{2}{\mathscr C}_1^2\bigg )+\beta c_{1p}{\mathscr C}_1^2
+\frac{\beta}{4c_{4p}}{\mathscr B}_2\bigg ]\\
&\qquad+\bigg [\frac{1-p}{2}+\alpha \bigg (M_1\frac{(p^2-6p+7)^+}{p-1}+\frac{16}{p-1}M_1+M_2\bigg )\bigg ]\mu_Q\bigg\}|J_x\uu|^2\\
&+\vartheta_n^2\bigg [\bigg (\alpha\frac{1-p}{12}+2\beta M_2+\beta\frac{64}{p-1}M_1+4\beta M_1\frac{(p^2-6p+7)^+}{p-1}\bigg )\mu_Q\\
&\qquad\;\,+\beta\vartheta_n^2\bigg (2r+\frac{1}{c_{1p}}+c_{5p}{\mathscr C}_2^2+c_{4p}{\mathscr B}_2
+\frac{4}{(p-1)\mu_Q}{\mathscr Q}_1^2+{\mathscr Q}_2\bigg )\bigg ]|D^2_x\uu|^2,
\end{align*}
where
$K_{3p}=K_{3p}(\alpha,\beta)=2c_{1p}^{-1}\alpha+(4c_{5p})^{-1}\beta$.
We choose $\alpha=\alpha_p$ sufficiently small such that
\begin{eqnarray*}
\alpha\bigg (M_1\frac{(p^2-6p+7)^+}{p-1}+\frac{16}{p-1}M_1+M_2\bigg )\le \frac{p-1}{4}
\end{eqnarray*}
and, then, $\beta\in (0,\alpha/2)$ such that
\begin{eqnarray*}
\beta\bigg (2M_2+\frac{64}{p-1}M_1+4M_1\frac{(p^2-6p+7)^+}{p-1}\bigg )\le \alpha\frac{p-1}{24}.
\end{eqnarray*}
With these choices of $\alpha$ and $\beta$, we conclude that
\begin{align*}
g\le& K_{3p}|\uu|^2+\vartheta_n^2\alpha\bigg(\frac{1-p}{4\alpha}\mu_Q+r+\frac{3}{2(p-1)\mu_Q}{\mathscr Q}_1^2+c_{1p}{\mathscr C}_1^2+\frac{\beta}{4\alpha c_{4p}}{\mathscr B}_2\bigg )|J_x\uu|^2\\
&+\beta\vartheta_n^4\bigg (\alpha\frac{1-p}{24\beta}\mu_Q+2r+\frac{1}{c_{1p}}+c_{5p}{\mathscr C}_2^2+c_{4p}{\mathscr B}_2
+\frac{4}{(p-1)\mu_Q}{\mathscr Q}_1^2+{\mathscr Q}_2\bigg )|D^2_x\uu|^2.
\end{align*}

Taking $\alpha$ and $\beta$ smaller if needed, we can assume that $g \le K_{4p}z_n^{2/p}$. Now, arguing as in the last part of the proof of Theorem \ref{thm-avvvooooccato} and letting $n$ tend to $+\infty$, estimate
\eqref{der_est} follows in this case.

{\em Step 2.}
Now, we consider the case $h=1$. Fix $t>0$. From Step 1 we get
\begin{align*}
&(|J_x\uu(t,\cdot)|^2+|D^2_x \uu(t,\cdot)|^2)^{\frac{p}{2}}\\
=&(|J_x \T(t-\sigma)\uu(\sigma,\cdot)|^2+|D^2_x\T(t-\sigma)\uu(\sigma,\cdot)|^2)^{\frac{p}{2}}\\
\le &\Gamma_{p,2,2}(t-\sigma)T(t-\sigma)(|\uu(\sigma,\cdot)|^2+|J_x\uu(\sigma,\cdot)|^2+|D^2_x\uu(\sigma,\cdot)|^2)^{\frac{p}{2}}\\
\le &\Gamma_{p,2,2}(t-\sigma)T(t)|\f|^p+\Gamma_{p,2,2}(t-\sigma)T(t-\sigma)(|J_x\uu(\sigma,\cdot)|^2+|D^2_x \uu(\sigma,\cdot)|^2)^{\frac{p}{2}}
\end{align*}
for any $\sigma \in (0,t)$, where we used the estimate $(a+b)^{p/2}\le a^{p/2}+b^{p/2}$ which holds true for any $a,b\ge 0$. For any $\delta>0$ we set $u(\sigma,\cdot)=(|\uu(\sigma,\cdot)|^2+|J_x\uu(\sigma,\cdot)|^2+\delta)^{\frac{p}{2}}$, $\sigma \in (0,t)$.
Arguing as in \eqref{gianni}, we can estimate
\begin{align*}
&T(t-\sigma)(|J_x\uu(\sigma,\cdot)|^2+|D^2_x \uu(\sigma,\cdot)|^2)^{\frac{p}{2}}\\
\le &\varepsilon^{\frac{2}{p}}\frac{p}{2}T(t\!-\!\sigma)[(|J_x\uu(\sigma,\cdot)|^2\!+\!|D^2_x \uu(\sigma,\cdot)|^2)(u(\sigma,\cdot))^{1-\frac{2}{p}}]
\!+\!\left (1-\frac{p}{2}\right )\varepsilon^{\frac{2}{p-2}}T(t\!-\!\sigma)u(\sigma,\cdot)
\end{align*}
for any $\sigma\in (0,t)$ and $\varepsilon>0$. Since $\Gamma_{p,2,2}(t-\sigma)=e^{C_p(t-\sigma)}$ for some positive constant $C_p$, combining the previous two estimates
 we deduce
\begin{align}\label{wferrara}
&e^{-C_{p}(t-\sigma)}(|J_x\uu(t,\cdot)|^2+|D^2_x \uu(t,\cdot)|^2)^{\frac{p}{2}}\nonumber\\
\le &T(t)|\f|^p+\left(1-\frac{p}{2}\right)\varepsilon^{\frac{2}{p-2}}T(t-\sigma)u(\sigma,\cdot)\nonumber\\
&+\frac{p}{2}\,
\varepsilon^{\frac{2}{p}}T(t-\sigma)\left[\left(|J_x\uu(\sigma,\cdot)|^2+|D^2_x \uu(\sigma,\cdot)|^2\right)u(\sigma,\cdot)^{1-\frac{2}{p}}\right].
\end{align}
Notice that the proof of Theorem \ref{thm-avvvooooccato} shows that
\begin{align*}
T(t-\sigma)u(\sigma,\cdot)\le &
T(t-\sigma)(|\uu(\sigma,\cdot)|^p+(|J_x\uu(\sigma,\cdot)|^2+\delta)^{\frac{p}{2}})\\
\le &T(t-\sigma)[T(\sigma)|\f|^p+\Gamma_{2,1,1}(\sigma)T(\sigma)(|\f|^2+|J\f|^2+\delta)^{\frac{p}{2}}]\\
\le & e^{c_p\sigma}T(t)(|\f|^2+|J\f|^2+\delta)^{\frac{p}{2}}+T(t)|\f|^p
\end{align*}
for any $\sigma \in (0,t)$ and some positive constant $c_p$. Consequently, integrating \eqref{wferrara} with respect to $\sigma \in (0,t)$ we deduce that
\begin{align*}
|D^2_x \uu(t,\cdot)|^p\le\frac{C_p}{1\!-\!e^{-C_pt}}\!\bigg \{&\frac{p}{2}\varepsilon^{\frac{2}{p}}\!\int_0^t\!T(t-\sigma)[(|J_x\uu(\sigma,\cdot)|^2\!+\!|D^2_x \uu(\sigma,\cdot)|^2)(u(\sigma,\cdot))^{1\!-\!\frac{2}{p}}]d\sigma\\
&+\left [1+\left (1-\frac{p}{2}\right )\varepsilon^{\frac{2}{p-2}}\right ]tT(t)|\f|^p\\
&+\left (1-\frac{p}{2}\right )\varepsilon^{\frac{2}{p-2}} \frac{e^{c_p t}-1}{c_p}T(t)(|\f|^2+|J\f|^2+\delta)^{\frac{p}{2}}\bigg \}.
\end{align*}
Now we claim that there exists a positive constant $K_p$ such that
\begin{align}\label{cinema}
&\int_0^tT(t-\sigma)[(|J_x\uu(\sigma,\cdot)|^2+|D^2_x \uu(\sigma,\cdot)|^2)(u(\sigma,\cdot))^{1-\frac{2}{p}}]d\sigma\nonumber\\
\le &K_p\bigg (T(t)(|\f|^2+|J\f|^2+\delta)^{\frac{p}{2}}+ \int_0^t T(t-\sigma)u(\sigma,\cdot)d\sigma\bigg ).
\end{align}
Once \eqref{cinema} is proved, using again \eqref{wferrara} we deduce
\begin{align*}
|D^2_x \uu(t,\cdot)|^p
%&\le\frac{C_p}{1-e^{-C_pt}}\bigg [\bigg (2+\left (1-\frac{p}{2}\right )\varepsilon^{\frac{2}{p-2}}\bigg )t T(t)|\f|^p+\gamma_{\varepsilon,p}(t)T(t)(|\f|^2+|J\f|^2+\delta)^{p/2}\Big)\\
&\le \frac{C_p}{1-e^{-C_pt}}[t+\gamma_{\varepsilon,p}(t)]T(t)(|\f|^2+|J\f|^2+\delta)^{\frac{p}{2}},
\end{align*}
where
\begin{eqnarray*}
\gamma_{\varepsilon,p}(t)=\left [\left (1-\frac{p}{2}\right )\varepsilon^{\frac{2}{p-2}}\!+\!K_p\frac{p}{2}\varepsilon^{\frac{2}{p}}\right ] \frac{e^{c_p t}-1}{c_p}+\frac{p}{2}\varepsilon^{\frac{2}{p}}K_p\!+\!\left (1-\frac{p}{2}\right )\varepsilon^{\frac{2}{p-2}}t+\frac{p}{2}\varepsilon^{\frac{2}{p}}K_pt.
\end{eqnarray*}
Letting $\delta \to 0^+$ and minimizing on $\varepsilon>0$ we obtain \eqref{der_est} with $k=2$, $h=1$ and
\begin{eqnarray*}
\Gamma_{p,2,1}(t)=\frac{C_p}{1-e^{-C_pt}}\bigg [t+K_p^{\frac{p}{2}}\left (\frac{e^{c_pt}-1}{c_p}+t\right )^{1-\frac{p}{2}}\left (\frac{e^{c_pt}-1}{c_p}+1+t\right )^{\frac{p}{2}}\bigg ].
\end{eqnarray*}

To conclude, we prove \eqref{cinema}. To this aim we introduce the same sequence of cut-off functions as in Step 1.
For any $\alpha>0$, $t>0$, $\delta\in (0,1)$, $n\in\N$,  and $f\in C^2_b(\R^d)$
we define the function $\psi_n:[0,t]\to C(\overline B_n)$ by setting
$\psi_n(\sigma)=T_n^{\mathcal D}(t-\sigma)(u_{n}(\sigma,\cdot)-\delta^{p/2})$ for any $\sigma\in [0,t]$, where
$u_n(\sigma,\cdot)=(|\uu_n(\sigma,\cdot)|^2+\alpha\vartheta^2_n|J_x\uu_n(\sigma,\cdot)|^2+\delta)^{p/2}$,
$(\uu_n)$ is the sequence of  solutions to the Cauchy-Dirichlet problems \eqref{prob_approx_Dir} and $T_n^{\mathcal D}(t)$ 
is the positive semigroup associated to the realization of $\mathcal{A}$ in $C_b(\overline B_n)$ with homogeneous Dirichlet
boundary conditions.

Since $u_n(\sigma,\cdot)-\delta^{p/2}$ vanishes on $\partial B_n$ for any $\delta>0$, taking \cite[Theorem 2.3(ix)]{acqui} into account, we can show that the function $\psi_n$
is differentiable in $(0,t)$ and
\begin{align*}
\psi'_n(\sigma)=p T_n^{\mathcal D}(t-\sigma)(u_n^{1-\frac{2}{p}}g),
\end{align*}
where $g$ is as in Step 1 with $\beta=0$ and $\uu$ being replaced by $\uu_n$. Hence, we can estimate
\begin{align*}
g\le& \frac{\alpha}{2c_{1p}}|\uu_n|^2\!+\!\bigg\{\alpha\vartheta_n^2\bigg (r+\frac{3}{2(p-1)\mu_Q}{\mathscr Q}_1^2+\frac{c_{1p}}{2}{\mathscr C}_1^2\bigg )\\
&\qquad\qquad\quad\;\,+\bigg [\frac{1-p}{2}\!+\!\alpha \bigg (M_1\frac{(p^2-6p+7)^+}{p-1}\!+\!\frac{16}{p-1}M_2^2+\frac{M_2}{2}\bigg )\bigg ]\mu_Q\bigg\}|J_x\uu_n|^2\\
&+\alpha\frac{1-p}{12}\mu_Q\vartheta_n^2|D^2_x\uu_n|^2.
\end{align*}
The coefficient in front of $|D^2_x\uu_n|^2$ is clearly negative and choosing properly $\alpha$ we can make negative also the coefficient in front of $|J_x\uu_n|^2$.
In this way we conclude that
$pu_n^{1-2/p}g\le -K_0p(|J_x\uu_n|^2+|D^2_x\uu_n|^2)\vartheta_n^2u_n^{1-2/p} +K_1pu_n$
for some positive constants $K_0$ and $K_1$. Thus,
\begin{equation*}
\psi'_n\le -K_0pT_n^\mathcal{D}(t-\cdot)[(|J_x\uu_n|^2+|D^2_x\uu_n|^2)\vartheta_n^2u_n^{1-\frac{2}{p}}]+K_1pT_n^\mathcal{D}(t-\cdot)u_n
\end{equation*}
in $(0,t)$,
which, we integrate with respect to $\sigma \in (\varepsilon, t-\varepsilon)$, $\varepsilon>0$. Letting first $n$ tend to $+\infty$ and then $\varepsilon$ tend to $0^+$, \eqref{cinema} follows.

{\em Step 3.} Estimate \eqref{der_est} with $k=2$ and $h=0$ can be obtained by the previous step, the semigroup law, \eqref{vector-scalar} and Theorem \ref{thm-avvvooooccato}.
Indeed, we have
\begin{align*}
|D^2_x\T(t)\f|^p=&|D^2_x\T(t/2)\T(t/2)\f|^p\le \Gamma_{p,2,1}(t/2)T(t/2)(|\T(t/2)\f|^2+|J_x\T(t/2)\f|^2)^{\frac{p}{2}}\\
\le &\Gamma_{p,2,1}(t/2)T(t/2)(|\T(t/2)\f|^p+|J_x\T(t/2)\f|^p)\\
\le &\Gamma_{p,2,1}(t/2)T(t)|\f|^p+\Gamma_{p,2,1}(t/2)\Gamma_{p,1,0}(t/2)T(t)|\f|^p
\end{align*}
for any $t>0$,
whence the claim follows with $\Gamma_{p,2,0}(t)=\Gamma_{p,2,1}(t/2)(1+\Gamma_{p,1,0}(t/2))$.
\end{proof}

\begin{example}{\rm
Let $\A$ be as in Example \ref{example-1} and assume further that, for any $k,s=1,\ldots,m$, the function $c_{ks}$ belongs to $C^{1+\alpha}_{\rm loc}(\Rd)$ and $|\nabla c_{ks}(x)|=O(|x|^{\tau})$, as $|x|\to +\infty$, with $0<\tau<\beta\vee \gamma$, then also Hypotheses \ref{sandro} hold true and Theorem \ref{thm-avvvooooccato} can be applied.
Indeed, since $\mu_{Q}(x)=(1+|x|^2)^{\gamma} \mu_0$ for any $x\in\Rd$, where $\mu_0>0$ is the minimum eigenvalue of the constant matrix $Q^0$, and the function $r$, which bounds form above the quadratic form associated to the Jacobian matrix of ${\bf b}$, is given by $r(x)=-b_0(1+|x|^2)^{\beta}$ for any $x \in \Rd$, the sum of the first two terms in the definition of $K_p$, which is the ``good'' part of $K_p$ (see Hypotheses \ref{sandro}), behaves like $|x|^{2(\beta\vee \gamma)}$ as $|x|\to +\infty$. Now it is immediate to check that $\mu_Q^{-1}{\mathscr Q}_1^2=O(|x|^{2\gamma-2})$ and ${\mathscr C}_1^2=O(x^{2\tau})$ as $|x|\to +\infty$, where we use Landau's formalism. Hence the supremum in Hypothesis \ref{sandro}(ii) is finite and estimate \eqref{der_est} with $k=1$, $h=0,1$ holds true.

Without much effort one can realize that the functions ${\mathscr B}_2$ and ${\mathscr Q}_2$ grow at infinity as $|x|^{2\beta-1}$ and $|x|^{2\gamma-2}$, respectively. Thus, if $c_{ij}\in C^{2+\alpha}_{\rm loc}(\Rd)$ and $|D^2c_{ks}(x)|^2=O(|x|^{\tau})$ as $|x|\to +\infty$, for any $k,s=1,\ldots,m$, then Hypotheses \ref{sandro-2} are satisfied too and Theorem \ref{thm-4.4} can be applied.} \end{example}

Starting from estimate \eqref{der_est}, it is routine to prove the following partial characterization of $D({\bf A}_p)$.

\begin{coro}
\label{coro-3.12}
Under Hypotheses $\ref{sandro}$, for any $t>0$ and $p\in (1,+\infty)$ the operator $\T(t)$ is bounded from $L^p_{\bm\mu}(\R^d;\R^m)$ into $W^{1,p}_{\bm\mu}(\Rd;\Rm)$ and
there exist two positive constants $N_{0,p}$ and $\omega_{1,p}$ such that
\begin{align}
\|\T(t)\f\|_{1,p,\bm\mu}\le N_{0,p}\max\{t^{-\frac{1}{2}},1\}\|\f\|_{p,\bm\mu},\qquad\;\,t>0,\;\,\f\in L^p_{\bm\mu}(\Rd;\Rm);
\label{none}\\[1mm]
\|\T(t)\f\|_{1,p,\bm\mu}\le N_{0,p}e^{\omega_{1,p}t}\|\f\|_{1,p,\bm\mu},\qquad\;\,t>0,\;\,\f\in W^{1,p}_{\bm\mu}(\Rd;\Rm).
\label{none-1}
\end{align}
Moreover, $D({\bf A}_p)$ is continuously embedded into $W^{1,p}_{\bm\mu}(\Rd;\R^m)$ for any $p\in (1,+\infty)$.

If also Hypotheses $\ref{sandro-2}$ are satisfied, then each operator $\T(t)$ is bounded from $L^p_{\bm\mu}(\R^d;\R^m)$ into $W^{2,p}_{\bm\mu}(\Rd;\Rm)$,
for any $p\in (1,+\infty)$, and
there exist two positive constants $N_{2,p}$ and $\omega_{2,p}$ such that
\begin{align}
\|\T(t)\f\|_{2,p,\bm\mu}\le N_{2,p}\max\{t^{-1},1\}\|\f\|_{p,\bm\mu},\qquad\;\,t>0,\;\,\f\in L^p_{\bm\mu}(\Rd;\Rm);
\label{larai}
\\[1mm]
\|\T(t)\f\|_{2,p,\bm\mu}\le N_{2,p}t^{-\frac{2-j}{2}t}e^{\omega_{2,p}t}\|\f\|_{j,p,\bm\mu},\qquad\;\,t>0,\;\,\f\in W^{j,p}_{\bm\mu}(\Rd;\Rm),\;\,j=1,2.
\label{larai-1}
\end{align}
\end{coro}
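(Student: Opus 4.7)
The plan is to deduce all the estimates by turning the pointwise bounds \eqref{der_est} of Theorems \ref{thm-avvvooooccato} and \ref{thm-4.4} into $L^p_{\bm\mu}$-estimates, exploiting the structural identity $\mu_j=c\xi_j\mu$ with $\xi_j>0$ provided by Theorem \ref{exi_inv_meas} together with the invariance of $\mu$ under the scalar semigroup $T(t)$.

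I start from the componentwise bound $|D^k_x(\T(t)\f)_i|^p\le |D^k_x\T(t)\f|^p\le \Gamma_{p,k,h}(t)T(t)\big(\sum_{j=0}^h|D^j\f|^2\big)^{p/2}$, integrate against $\mu_i$, sum over $i$, replace each $\mu_i$ by $c\xi_i\mu$ and push $T(t)$ through the $\mu$-integral by invariance. The right-hand side becomes a constant multiple of $\int_{\Rd}\big(\sum_{j=0}^h|D^j\f|^2\big)^{p/2}d\mu$, and the elementary equivalence $(\sum_{j=0}^h a_j)^{p/2}\asymp \sum_{j=0}^h a_j^{p/2}$ combined with $\mu=(c\xi_\ell)^{-1}\mu_\ell$ dominates it by a constant times $\|\f\|_{W^{h,p}_{\bm\mu}}^p$. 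This yields
\begin{align*}
\|D^k_x\T(t)\f\|_{L^p_{\bm\mu}}\le C_{p,k,h}\,\Gamma_{p,k,h}(t)^{1/p}\,\|\f\|_{W^{h,p}_{\bm\mu}},\qquad k\in\{1,2\},\;h\in\{0,\ldots,k\}.
\end{align*}
Inserting the explicit asymptotics of $\Gamma_{p,k,h}$ from Theorems \ref{thm-avvvooooccato} and \ref{thm-4.4} gives \eqref{none-1} directly (with $\Gamma_{p,1,1}=\gamma_{1,p}e^{C_{1,p}t}$) and the cases $j=2$ and $j=1$ of \eqref{larai-1} (with $\Gamma_{p,2,2}$ bounded by $e^{\omega_{2,p}t}$ and $\Gamma_{p,2,1}(t)^{1/p}\sim t^{-1/2}$, respectively).

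To obtain the time-uniform bounds \eqref{none} and \eqref{larai}, whose right-hand sides carry no exponential factor in $t$, I invoke the semigroup law. The short-time behaviour $\Gamma_{p,k,0}(t)^{1/p}\sim t^{-k/2}$ yields the desired estimates directly for $t\in(0,1]$. For $t\ge 1$ I write $\T(t)=\T(1)\T(t-1)$, use the $t=1$ bound for the factor $\T(1):L^p_{\bm\mu}\to W^{k,p}_{\bm\mu}$, and control $\T(t-1)$ by the uniform bound \eqref{festa}. This kills the exponential factor, producing $\max\{t^{-k/2},1\}$ as required.

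Finally, to prove the embedding $D({\bf A}_p)\hookrightarrow W^{1,p}_{\bm\mu}$, I use that \eqref{festa} makes $(\T(t))_{t\ge 0}$ uniformly bounded on $L^p_{\bm\mu}$, so for $\lambda>0$ sufficiently large the resolvent admits the Laplace representation $R(\lambda,{\bf A}_p)\g=\int_0^{+\infty}e^{-\lambda t}\T(t)\g\,dt$. Given $\f\in D({\bf A}_p)$, set $\g=\lambda\f-{\bf A}_p\f$; \eqref{none} yields
\begin{align*}
\|\f\|_{W^{1,p}_{\bm\mu}}\le N_{0,p}\bigg(\int_0^{+\infty}e^{-\lambda t}\max\{t^{-1/2},1\}\,dt\bigg)\|\g\|_{L^p_{\bm\mu}}\le K\bigl(\|\f\|_{L^p_{\bm\mu}}+\|{\bf A}_p\f\|_{L^p_{\bm\mu}}\bigr),
\end{align*}
since the integral is finite. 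The only non-mechanical ingredient is recognising the need for the semigroup trick to turn the exponentially growing pointwise bound into a time-uniform one; the rest is a careful but routine bookkeeping of the constants relating $\bm\mu$ to $\mu$ and of the exponents in the elementary inequalities.
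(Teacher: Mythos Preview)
Your proposal is correct and follows essentially the same route as the paper: integrate the pointwise estimate \eqref{der_est} against $\mu$, use the invariance of $\mu$ under $T(t)$ and the relation $\mu_i=c\xi_i\mu$ to pass to the $L^p_{\bm\mu}$-norms, then invoke the semigroup law $\T(t)=\T(1)\T(t-1)$ together with \eqref{festa} to strip the exponential factor for large $t$, and finally use the Laplace representation of the resolvent for the embedding $D({\bf A}_p)\hookrightarrow W^{1,p}_{\bm\mu}$. The only minor difference is that the paper observes the Laplace formula holds for every $\lambda>0$ (since $\T(t)$ is uniformly bounded by \eqref{festa}) and also records the interpolation inequality $\|J_x\uu\|_{p,\bm\mu}\le c_{3,p}\|\uu\|_{p,\bm\mu}^{1/2}\|{\bf A}_p\uu\|_{p,\bm\mu}^{1/2}$, neither of which affects the argument you give.
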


\begin{proof}
To begin with, we prove estimates \eqref{none} and \eqref{none-1}; the proofs of \eqref{larai} and \eqref{larai-1} are completely similar. Fix $p\in (1,+\infty)$, $\f\in C^{\infty}_c(\Rd;\Rm)$. Integrating
\eqref{der_est} with respect to the measure $\mu$ we obtain that
\begin{align*}
\int_{\Rd}|J_x\T(t)\f|^pd\mu\le &\Gamma_{p,1,h}(t)\int_{\Rd}T(t)\bigg (\sum_{k=0}^j|D^k\f|^2\bigg )^{\frac{p}{2}}d\mu\\
=&\Gamma_{p,1,h}(t)\int_{\Rd}\bigg (\sum_{k=0}^h|D^k\f|^2\bigg )^{\frac{p}{2}}d\mu
\end{align*}
for any $t>0$. Since $\mu_i=\widetilde c_i\mu$ for some positive constant $\widetilde c_i$ and any $i=1,\ldots,m$ (see Theorem \ref{exi_inv_meas}),
from the previous estimate, we conclude that
\begin{eqnarray*}
\|\T(t)\f\|_{1,p,\bm\mu}\le \widetilde N_{0,p}e^{\omega_{1,p}t}t^{-\frac{1-j}{2}t}\|\f\|_{j,p,\bm\mu},\qquad\;\,t>0,\;\,j=0,1,
\end{eqnarray*}
for some positive constants $\widetilde N_{0,p}$ and $\omega_{1,p}$. Estimate \eqref{none-1} follows by a density argument, taking Remark \ref{rm_dens} into account.
If $j=0$, then we can remove the exponential term from the right-hand side of the previous estimate.
Indeed, for $t>1$, using \eqref{festa} we can estimate
\begin{align*}
\|\T(t)\f\|_{1,p,\bm\mu}=&\|\T(1)\T(t-1)\f\|_{1,p,\bm\mu}\le \widetilde N_{0,p}e^{\omega_{1,p}}\|\T(t-1)\f\|_{p,\bm\mu}\\
\le &2^{\frac{p-1}{p}}\widetilde N_{0,p}e^{\omega_{1,p}}\|\f\|_{p,\bm\mu}
\end{align*}
and \eqref{none} follows, again by a density argument.

Let us now complete the proof, by showing that $D({\bf A}_p)\hookrightarrow W^{1,p}_{\bm\mu}(\Rd;\Rm)$.
We fix $p\in (1,+\infty)$ and observe that, in view of \eqref{none}, the resolvent operator $R(\lambda,{\bf A}_p)$ is defined for any $\lambda>0$ and
\begin{eqnarray*}
R(\lambda,{\bf A}_p)\f=\int_0^{+\infty}e^{-\lambda t}\T(t)\f dt,\qquad\;\,\f\in L^p_{\bm\mu}(\R^d;\R^m).
\end{eqnarray*}

Fix $\lambda>0$, $\uu\in D({\bf A}_p)$ and let $\f\in L^p_{\bm\mu}(\R^d;\R^m)$ be such that $\uu=R(\lambda,{\bf A}_p)\f$.
Using \eqref{der_est} with $h=0$ and $k=1$, we can estimate
\begin{align*}
\|J_x\uu\|_{p,\bm\mu}\le &c_{1,p}\|\f\|_{p,\bm\mu}\int_0^{+\infty}t^{-\frac{1}{2}}e^{-\lambda t}dt
\le c_{2,p}\lambda^{-\frac{1}{2}}\|\f\|_{p,\bm\mu}\\
\le &c_{2,p}(\lambda^{\frac{1}{2}}\|\uu\|_{p,\bm\mu}+\lambda^{-\frac{1}{2}}\|{\bf A}_p\uu\|_{p,\bm\mu})
\end{align*}
for some positive constants $c_{1,p}$ and $c_{2,p}$, independent of $\uu$. The previous chain of inequalities
shows that $D({\bf A}_p)$ is compactly embedded into $W^{1,p}_{\bm\mu}(\Rd;\R^m)$. Moreover, minimizing with respect to $\lambda>0$
we also conclude that
\begin{eqnarray*}
\|J_x\uu\|_{p,\bm\mu}\le c_{3,p}\|\uu\|_{p,\bm\mu}^{\frac{1}{2}}\|{\bf A}_p\uu\|_{p,\bm\mu}^{\frac{1}{2}},
\end{eqnarray*}
the constant $c_{3,p}$ being independent of $\uu$.
\end{proof}

\section{Asymptotic behaviour}
\label{sect:4}
To study the asymptotic behaviour of $\T(t)\f$ as $t \to +\infty$ we need some additional hypotheses.

\begin{hyp0}
\label{hyp-nonsichiama}
The coefficients of the operator $\A$ belong to $C^{1+\alpha}_{\rm loc}(\Rd)$. Moreover, there exists a positive constant $c$ such that
\begin{align}
(i)~|q_{ij}(x)|\le c(1+|x|^2)\varphi(x),\qquad\;\, (ii)~\langle {\bf b}(x),x\rangle \le c(1+|x|^2)\varphi(x),
\label{nonsichiama}
\end{align}
for any $x\in\Rd$ and $i,j=1,\ldots,m$.
\end{hyp0}

\begin{rmk}
{\rm We stress that, in general, Hypothesis \ref{hyp-nonsichiama} is not implied by Hypothesis \ref{hyp-base}(iv).
Consider for instance the one-dimensional operator $\mathcal A=qD_x^2+bD_x$, where
$q(x)=(1+x^2)e^{x^4}$ and $b(x)=-3x(1+x^2)e^{x^4}$ for any $x\in\R$.
It is easy to check that the function $x\mapsto x^2+1$ satisfies Hypothesis \ref{hyp-base}(iv).

We claim that no function $\varphi$ satisfying both Hypothesis \ref{hyp-base}(iv) and Hypothesis \ref{hyp-nonsichiama} exists.
We argue by contradiction and assume that such a function exists.
To begin with, we observe that \eqref{nonsichiama}(i) implies that
\begin{equation}
\varphi(x)\geq ce^{x^4},\qquad\;\,x\in\R,
\label{impossibile}
\end{equation}
for some positive constant $c$.
From this condition we can easily deduce that there exists an increasing sequence $(x_n)$, which blows up as $n\to +\infty$,
such that
\begin{equation}
\varphi'(x_n)> cx_n^3e^{x_n^4}-1,\qquad\;\, n\in\N.
\label{impossibile-1}
\end{equation}
Indeed, if this were not the case, there would exist
$M_1>0$ such that $\varphi'(x)\leq cx^3e^{x^4}-1$ for any $x\ge M_1$. This inequality, integrated between $M_1$ and $x$, gives
$\varphi(x)\le ce^{x^4}/4+K$ for some positive constant $K$, which, clearly, contradicts \eqref{impossibile}.

Since we are assuming that $\varphi$ satisfies Hypothesis \ref{hyp-base}(iv), we can determine a positive constant $M_2$ such that
$q\varphi''+b\varphi'\leq 0$ in $[M_2,+\infty)$, from which we deduce that
$\varphi''(x)\le 3x \varphi'(x)$ for any $x\ge M_2$. Let $n_0$ be the smallest integer such that $x_{n_0}\ge M_2$.
Then, from the previous differential inequality we can infer that
$\varphi'(x)\le e^{3x^2/2}\varphi'(x_{n_0})$ for any $x\ge x_{n_0}$.
This estimate combined with \eqref{impossibile-1} leads us to a contradiction.}
\end{rmk}

The following result plays a crucial role in the study of the asymptotic behaviour of the function $\T(t)\f$ as $t\to +\infty$.

\begin{prop}
\label{metropolitana}
Under Hypothesis $\ref{hyp-nonsichiama}$, for any $\f\in C^{3+\alpha}_c(\Rd;\Rm)$
the $L^2_{\mu}(\Rd)$-norm of $|J_x\T(t)\f|$ vanishes as $t$ tends to $+\infty$.
\end{prop}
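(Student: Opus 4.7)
The plan is first to derive an energy estimate showing that $\phi(t):=\|J_x\T(t)\f\|_{L^2_\mu}^2$ is integrable on $(0,+\infty)$, and then to upgrade integrability to pointwise vanishing by exploiting regularity of $\phi$. The invariance of $\mu$ for the scalar semigroup $T(t)$ drives the energy estimate; Hypothesis~\ref{hyp-nonsichiama} is precisely what makes the required integration by parts legitimate in the presence of unbounded coefficients.

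Set $u(t,\cdot)=\T(t)\f$ and $\psi(t)=\|u(t,\cdot)\|_{L^2_\mu}^2=\sum_k\int u_k^2\,d\mu$. Since $\f\in C^{3+\alpha}_c\subset D({\bf A}_2)$, the map $t\mapsto u(t,\cdot)$ is of class $C^1$ with values in $L^2_\mu$ and satisfies $D_tu=\A u$; differentiating $\psi$ yields
\begin{equation*}
\psi'(t)=2\sum_k\int u_k\mathcal{A}u_k\,d\mu+2\int\langle Cu,u\rangle\,d\mu.
\end{equation*}
The Dirichlet form identity $\sum_k\int u_k\mathcal{A}u_k\,d\mu=-\sum_k\int|Q^{1/2}\nabla u_k|^2\,d\mu$ is established by applying the invariance of $\mu$ (i.e., $\int\mathcal{A}g\,d\mu=0$ for sufficiently regular $g$) to $g=\eta_R u_k^2$, where $\eta_R\in C^2_c(\Rd)$ equals $1$ on $B_R$ and is supported in $B_{2R}$, using the pointwise identity $\mathcal{A}(u_k^2)=2u_k\mathcal{A}u_k+2|Q^{1/2}\nabla u_k|^2$. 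Letting $R\to+\infty$, the cut-off error terms, of the form $\int u_k^2\mathcal{A}\eta_R\,d\mu$ and $\int\langle Q\nabla\eta_R,u_k\nabla u_k\rangle\,d\mu$, vanish thanks to Hypothesis~\ref{hyp-nonsichiama} combined with the integrability of $\varphi$ against $\mu$ (a consequence of $T(t)\varphi\le\varphi+c^{-1}a$, as in the proof of Proposition~\ref{prop-rossana}). Combining with $\langle Cu,u\rangle\le 0$ (Hypothesis~\ref{hyp-base}(iii)) and $\mu_Q\ge\mu_0$, we obtain $\psi'(t)\le-2\mu_0\phi(t)$ for every $t>0$; integrating and using $\psi\ge 0$ gives
\begin{equation*}
\int_0^{+\infty}\phi(t)\,dt\le\frac{\|\f\|_{L^2_\mu}^2}{2\mu_0}<+\infty.
\end{equation*}

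To upgrade integrability to $\phi(t)\to 0$, the plan is to show that $\phi$ is uniformly continuous on $[1,+\infty)$ and then invoke a Barbalat-type lemma (an integrable, uniformly continuous, nonnegative function on $[1,+\infty)$ necessarily tends to zero). Uniform continuity is obtained by combining the interior Schauder estimates of Theorem~\ref{thm-A2}, which, thanks to the $C^{1+\alpha}_{\rm loc}$ coefficient regularity provided by Hypothesis~\ref{hyp-nonsichiama}, apply both to $\T(\cdot)\f$ and to the parabolic system satisfied by $J_x\T(\cdot)\f$, thereby yielding uniform-in-$t$ equicontinuity on $B_R$ for every $R$; the tail $\int_{\Rd\setminus B_R}|J_x\T(t)\f|^2d\mu$ is controlled uniformly in $t\ge 1$ via the decay $\mu(\Rd\setminus B_R)\to 0$ as $R\to+\infty$ (obtained from the integrability of $\varphi$ against $\mu$) together with the sup-norm bound $\|\T(t)\f\|_\infty\le\|\f\|_\infty$.

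The chief technical obstacle is the delicate cut-off argument needed to justify the Dirichlet form identity under the mild coefficient bounds; the growth hypotheses in Hypothesis~\ref{hyp-nonsichiama} and the Lyapunov structure of Hypothesis~\ref{hyp-base}(iv) are both essential to tame the cut-off errors. The conclusion by Barbalat's lemma is more standard but requires care to secure the uniform continuity of $\phi$ without the stronger coefficient hypotheses of Section~\ref{sect:3}.
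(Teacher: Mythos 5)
Your first half — the energy estimate yielding $\int_0^{+\infty}\phi(t)\,dt \le \mu_0^{-1}\|\f\|_{L^2_{\bm\mu}}^2$ via the Dirichlet-form identity, the cut-off argument, and the control of cut-off errors through Hypothesis \ref{hyp-nonsichiama} and $\varphi\in L^1_\mu(\Rd)$ — coincides with the paper's argument, just stated more compactly. For the upgrade from integrability to vanishing at $+\infty$, you take a genuinely different route: you aim for uniform continuity of $\phi$ on $[1,+\infty)$ and a Barbalat-type lemma, whereas the paper shows $h\in W^{1,1}((0,+\infty))$ by computing $h'$ explicitly (differentiating $h_n(t)=\sum_j\int\vartheta_n|\nabla_x(\T(t)\f)_j|^2d\mu$ using the extra regularity $\f\in C^{3+\alpha}_c$ to write $D_t D_j(\T(\cdot)\f)_i = D_j(\T(\cdot)\A\f)_i$), and bounding $h'$ in $L^1$ by Cauchy--Schwarz together with the energy estimate applied a second time to $\A\f$; then $W^{1,1}((0,+\infty))\hookrightarrow C_b([0,+\infty))$ combined with $h\in L^1$ gives the vanishing.

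Your version has a genuine gap in the tail control. You need $\int_{\Rd\setminus B_R}|J_x\T(t)\f|^2\,d\mu$ to be small uniformly in $t\ge 1$ as $R\to+\infty$, and you justify this by ``$\mu(\Rd\setminus B_R)\to 0$ together with $\|\T(t)\f\|_\infty\le\|\f\|_\infty$''. But the sup-norm contraction bounds $\T(t)\f$, not its Jacobian; it gives no pointwise control on $|J_x\T(t)\f|$ off compacta. A global, uniform-in-$t$ gradient bound is exactly what is not available here: the gradient estimates of Section~\ref{sect:3} require the stronger Hypotheses~\ref{sandro}, which are not assumed in Section~\ref{sect:4}. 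The interior Schauder estimates from Theorem~\ref{thm-A2} only control $J_x\T(t)\f$ on compacta with constants that a priori depend on the compact set, so they cannot repair the tail. The paper's $W^{1,1}$ argument sidesteps this entirely: bounding $h'$ in $L^1$ reduces (via Cauchy--Schwarz) to the already-proved $L^2$-in-time-and-space estimate applied to the \emph{two} initial data $\f$ and $\A\f$ (both admissible since $\A\f\in C^{1+\alpha}_c\subset C_b(\Rd;\R^m)$ thanks to $C^{1+\alpha}_{\rm loc}$ coefficients), and never invokes any uniform-in-$t$ tail control. If you want to salvage your Barbalat-based ending you essentially must prove an $L^1$ bound on $\phi'$, which brings you back to the paper's computation.
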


\begin{proof}
To begin with, we recall that
\begin{equation}
\int_{\Rd}{\mathcal A}\psi d\mu=0,
\label{infinitesima-invariant}
\end{equation}
for any bounded function $\psi\in D_{\max}(\mathcal A)=\{u\in C_b(\Rd)\cap\bigcap_{p<+\infty}W^{2,p}_{\rm loc}(\Rd): {\mathcal A}u\in C_b(\Rd)\}$ (see \cite[Proposition 9.1.1]{newbook}).
In particular, if $\psi\in C^2_c(\Rd)$, then $\psi^2\in C^2_c(\Rd)$ and writing \eqref{infinitesima-invariant} with $\psi$ being replaced by
$\psi^2$, we easily conclude that
\begin{equation}
\int_{\Rd}\psi{\mathcal A}\psi d\mu=-\int_{\Rd}|Q^{\frac{1}{2}}\nabla\psi|^2 d\mu.
\label{progetto}
\end{equation}

We now introduce a decreasing function $\vartheta\in C^2(\R)$ such that $\chi_{(-\infty,1]}\le\vartheta\le\chi_{(-\infty,2]}$ and, for any $n\in\N$ and $x\in\Rd$, we set
$\vartheta_n(x)=\vartheta(n^{-1}|x|)$.

As it is immediately seen, for any $\f\in C_b(\Rd;\Rm)$ and $t>0$, the function $\vartheta_n\T(t)\f$ belongs to $C^2_c(\Rd;\Rm)$, so that, by
\eqref{progetto}, it follows that
\begin{equation}
\int_{\Rd}(\vartheta_n\T(t)\f)_j{\mathcal A}(\vartheta_n\T(t)\f)_j d\mu=-\int_{\Rd}|Q^{\frac{1}{2}}\nabla_x (\vartheta_n\T(t)\f)_j|^2 d\mu,
\label{progetto-1}
\end{equation}
for any $j=1,\ldots,m$ and $n\in\N$.

Now, we adapt to our situation the procedure in \cite[Proposition 3.5]{DapGold01} and \cite[Proposition 2.15]{LorLunZam10} (see also \cite[Proposition 2.6]{LorLunSch16Str}).
We fix $\f\in C_b(\Rd;\R^m)$, $n\in\N$, and observe that
\begin{align}
\frac{d}{dt}\|\vartheta_n\T(t)\f\|_{2,\bm\mu}^2
= & 2\sum_{j=1}^m\int_{\Rd}\vartheta_n^2(\T(t)\f)_j{\mathcal A}(\T(t)\f)_j d\mu+2\int_{\Rd}\!\vartheta_n^2\langle C\T(t)\f,\T(t)\f\rangle d\mu
\notag \\
\leq & 2\sum_{j=1}^m\int_{\Rd}\vartheta_n^2(\T(t)\f)_j{\mathcal A}((\T(t)\f)_j)d\mu
\label{progetto-2}
\end{align}
for any $t>0$. A straightforward computation reveals that
\begin{align*}
\vartheta_n{\mathcal A}(\T(t)\f)_j={\mathcal A}(\vartheta_n(\T(t)\f)_j)-(\T(t)\f)_j){\mathcal A}\vartheta_n
-2\langle Q\nabla\vartheta_n,\nabla_x(\T(t)\f)_j\rangle,
\end{align*}
which we replace in \eqref{progetto-2}. Taking \eqref{progetto-1} into account, we get
\begin{align}
\frac{d}{dt}\|\vartheta_n\T(t)\f\|_{2,\bm\mu}^2
\le & -2\sum_{j=1}^m\int_{\Rd}\vartheta_n|Q^{\frac{1}{2}}\nabla_x(\vartheta_n\T(t)\f)_j|^2 d\mu-2\int_{\Rd}\vartheta_n{\mathcal A}\vartheta_n|\T(t)\f|^2d\mu\notag\\
&-4\sum_{j=1}^m\int_{\Rd}\vartheta_n (\T(t)\f)_j\langle Q\nabla\vartheta_n,\nabla_x(\T(t)\f)_j\rangle d\mu.
\label{progetto-3}
\end{align}
Note that
\begin{align*}
&\bigg |\int_{\Rd}\vartheta_n (\T(t)\f)_j\langle Q\nabla\vartheta_n,\nabla_x(\T(t)\f)_j\rangle d\mu\bigg |\\
\le &\bigg |\int_{\Rd}(\T(t)\f)_j\langle Q\nabla\vartheta_n,\nabla_x(\vartheta_n (\T(t)\f)_j)\rangle d\mu\bigg |+\int_{\Rd}|(\T(t)\f)_j|^2|Q^{\frac{1}{2}}\nabla\vartheta_n|^2 d\mu\\
\le &\bigg (\int_{\Rd}|Q^{\frac{1}{2}}\nabla_x(\vartheta_n (\T(t)\f)_j)|^2d\mu\bigg )^{\frac{1}{2}}
\bigg (\int_{\Rd}|(\T(t)\f)_j|^2|Q^{\frac{1}{2}}\nabla\vartheta_n|^2 d\mu\bigg )^{\frac{1}{2}}\\
&+\int_{\Rd}|(\T(t)\f)_j|^2|Q^{\frac{1}{2}}\nabla\vartheta_n|^2 d\mu\\
\le &\frac{1}{4}\int_{\Rd}|Q^{\frac{1}{2}}\nabla_x(\vartheta_n (\T(t)\f)_j)|^2 d\mu+
2\int_{\Rd}((\T(t)\f)_j)^2|Q^{\frac{1}{2}}\nabla\vartheta_n|^2 d\mu
\end{align*}
so that, we can continue estimate \eqref{progetto-3} and obtain
\begin{align*}
\frac{d}{dt}\|\vartheta_n\T(t)\f\|_{2,\bm\mu}^2
\le & -\sum_{j=1}^m\int_{\Rd}\vartheta_n|Q^{\frac{1}{2}}\nabla_x(\vartheta_n\T(t)\f)_j|^2d\mu
-2\int_{\Rd}\vartheta_n\langle b,\nabla\vartheta_n\rangle |\T(t)\f|^2d\mu\notag\\
&+2\|\f\|_{\infty}^2\int_{\Rd}\big (|{\rm Tr}(QD^2\vartheta_n)|+4|Q^{\frac{1}{2}}\nabla\vartheta_n|^2\big )d\mu.
\end{align*}

Using \eqref{nonsichiama}(ii) we can estimate
\begin{align*}
-\vartheta_n(x)\langle {\bf b}(x),\nabla\vartheta_n(x)\rangle |(\T(t)\f)(x)|^2
=&\vartheta_n(x)|\vartheta'(n^{-1}\!|x|)|\langle {\bf b}(x),x\rangle \frac{1}{n|x|}|(\T(t)\f)(x)|^2\\
\le &c\vartheta_n(x)|\vartheta'(n^{-1}\!|x|)|\frac{(1\!+\!|x|^2)}{n|x|}\varphi(x)|(\T(t)\f)(x)|^2\\
%\le &c\|\vartheta'\|_{\infty}(1+4n^2)n^{-2}\|\f\|_{\infty}\varphi(x)\chi_{B_{2n}\setminus B_n}(x)\\
\le &5c\|\vartheta'\|_{\infty}\|\f\|_{\infty}^2\varphi(x)\chi_{B_{2n}\setminus B_n}(x)
\end{align*}
for any $x\in\Rd$.
Hence,
\begin{eqnarray*}
-\int_{\Rd}\vartheta_n\langle b,\nabla\vartheta_n\rangle |(\T(t)\f)|^2d\mu
\le 5c\|\vartheta'\|_{\infty}\|\f\|_{\infty}^2\int_{B_{2n}\setminus B_n}\varphi d\mu
=:a_n
\end{eqnarray*}
and the sequence $(a_n)$ vanishes as $n\to +\infty$, since the function $\varphi$ belongs to $L^1_{\mu}(\Rd)$ (see \cite[Chapther 9]{newbook}). Similarly, using
\eqref{nonsichiama}(i), we can show that
\begin{eqnarray*}
\lim_{n\to+\infty}\int_{\Rd}\big (|{\rm Tr}(QD^2\vartheta_n)|+4|Q^{\frac{1}{2}}\nabla\vartheta_n|^2\big )d\mu=0.
\end{eqnarray*}
Summing up, we have shown that
\begin{align}
\frac{d}{dt}\|\vartheta_n\T(t)\f\|_{2,\bm\mu}^2
\le & -\sum_{j=1}^m\int_{\Rd}\vartheta_n|Q^{\frac{1}{2}}\nabla_x(\vartheta_n\T(t)\f)_j|^2 d\mu+b_n
\label{progetto-4}
\end{align}
for some sequence $(b_n)$ which converges to $0$ as $n\to +\infty$.
Integrating \eqref{progetto-4} from $0$ to $t>0$, we conclude that
\begin{align*}
\int_0^tds\sum_{j=1}^m\int_{\Rd}\vartheta_n|Q^{\frac{1}{2}}(\nabla_x(\vartheta_n\T(s)\f)_j)|^2 d\mu \leq \|\f\|_{L^2_\mu}^2
+b_nt.
\end{align*}
Applying Fatou lemma to the previous formula, we deduce that
\begin{align*}
\int_0^tds\sum_{j=1}^m\int_{\Rd}|Q^{\frac{1}{2}}(\nabla_x(\T(s)\f)_j)|^2d\mu \leq \|\f\|_{L^2_\mu(\Rd;\R^m)}^2.
\end{align*}
It thus follows that the function $|Q^{1/2}\nabla_x(\T(\cdot)\f)_j|$ belongs to $L^2([0,+\infty);L^2_{\mu}(\Rd))$ for any $j=1,\ldots,m$. In particular,
if we set
\begin{align*}
h(t)=\sum_{j=1}^m\int_{\Rd}|\nabla_x(\T(t)\f)_j|^2d\mu,
\end{align*}
then $h$
belongs to $L^1((0,+\infty))$ and its $L^1$-norm is bounded by $\mu_0^{-1}\|\f\|^2_{2,\bm\mu}$, where $\mu_0$ denotes
the infimum over $\Rd$ of the minimum eigenvalue $\mu_Q(x)$ of the matrix $Q(x)$.

We now assume that $\f\in C^{3+\alpha}_c(\Rd;\R^m)$. Since the coefficients of the operator $\A$ are in $C^{1+\alpha}_{\rm loc}(\Rd)$, the function
$D_j(\T(\cdot)\f)_i$ is differentiable with respect to time (see Theorem \ref{teo-app})
 and $D_t(D_j(\T(\cdot)\f)_i)=D_j(D_t(\T(\cdot)\f)_i)
=D_j(\A\T(\cdot)\f)_i$ on $(0,+\infty)\times\Rd$. By \cite[Proposition 3.2]{DelLor11OnA}, 
$\T(t)\A\f=\A\T(t)\f$ for any $t>0$. If thus follows that $D_t(D_j(\T(\cdot)\f)_i)=D_j(\T(t)\A\f)_i$.

For any $n\in\N$, let us introduce the function $h_n:[0,+\infty)\to\R$ defined by
\begin{eqnarray*}
h_n(t)=\sum_{j=1}^m\int_{\Rd}\vartheta_n|\nabla_x(\T(t)\f)_j|^2d\mu,\qquad\;\,t>0.
\end{eqnarray*}
As it is immediately seen, $h_n$ converges to $h$ in $L^1((0,+\infty))$ and pointwise in $[0,+\infty)$.
Moreover, applying the dominated convergence and taking into account that the functions $D_t(D_j(\T(\cdot)\f)_i)$ and $D_j(\T(\cdot)\f)_i$
are continuous in $(0,+\infty)\times\Rd$, we can show that $h_n$ is differentiable in $(0,+\infty)$ and
\begin{eqnarray*}
h_n'(t)=\sum_{j=1}^m\int_{\Rd}\vartheta_n \langle \nabla_x(\T(t)\f)_j,\nabla_x(\T(t)\A\f)_j\rangle d\mu,\qquad\;\,t>0.
\end{eqnarray*}
By applying Cauchy-Schwarz inequality, we deduce that $\langle \nabla_x(\T(\cdot)\f)_j,\nabla_x(\T(\cdot)\A\f)_j\rangle$ belongs to $L^1((0,+\infty)\times\Rd;dt\times d\mu)$ for any $j=1,\ldots,m$.
Hence, the above results show that $h_n'$ converges to the function
$\sum_{j=1}^m\int_{\Rd} \langle \nabla_x(\T(\cdot)\f)_j,\nabla_x(\T(\cdot)\A\f)_j\rangle d\mu$ as $n\to +\infty$.
Since $W^{1,1}((0,+\infty))\hookrightarrow C_b([0,+\infty))$ and $h_n$ converges in $W^{1,1}((0,+\infty))$, it follows that
$h\in W^{1,1}((0,+\infty))$ and $h_n$ converges to $h$ uniformly in $(0,+\infty)$. In particular, $h$ vanishes as $t\to +\infty$.
\end{proof}

Now we study the asymptotic behaviour of $\T(t)$. To this aim, using the same notation as in the proof of Theorem \ref{exi_inv_meas},
for any $\f\in L^1_{\bm \mu}(\Rd;\R^m)$  we set
\begin{eqnarray*}
{\mathcal M}_\f= \sum_{j=1}^m\int_{\Rd}f_jd\mu_j,
\end{eqnarray*}
where $\mu_j=\xi_j\mu$ for any $j=1,\ldots,m$.

\begin{thm}\label{asy_thm}
Let Hypothesis $\ref{hyp-nonsichiama}$ be satisfied. Then, $\T(t)\f$ converges to ${\mathcal M}_{\f}\xi$ locally uniformly in $\Rd$ as $t\to +\infty$,
for any $\f\in B_b(\Rd;\Rm)$.
Further, if  $\f\in L^p_{\bm\mu}(\Rd;\Rm)$, then the function $\T(t)\f$ converges to ${\mathcal M}_{\f}\xi$
in $L^p_{\bm\mu}(\Rd;\R^m)$ as $t\to +\infty$.
\end{thm}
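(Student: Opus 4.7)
The plan is to prove the local uniform convergence first for regular compactly supported data and then extend by density to $B_b(\Rd;\R^m)$ and $L^p_{\bm\mu}(\Rd;\R^m)$. Fix $\f\in C^{3+\alpha}_c(\Rd;\R^m)$ and let $\Omega(\f)$ denote the set of all $\g\in C_b(\Rd;\R^m)$ that arise as locally uniform limits of subsequences $\T(t_{n_k})\f$ with $t_{n_k}\to+\infty$. By Proposition \ref{prop-rossana}, $\Omega(\f)$ is non-empty, bounded by $\|\f\|_\infty$, and the second assertion of that proposition yields $\T(s)\g\in\Omega(\f)$ for every $\g\in\Omega(\f)$ and $s\ge 0$. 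The interior Schauder estimates of Theorem \ref{thm-A2} upgrade the local uniform convergence along $\T(t_{n_k})\f$ to local $C^2$-convergence, so $J_x\T(t_{n_k})\f\to J_x\g$ locally uniformly; combining with Proposition \ref{metropolitana}, which gives $\|J_x\T(t)\f\|_{L^2_\mu(\Rd;\R^{dm})}\to 0$, and with the strict positivity of the density of $\mu$, we deduce $J_x\g\equiv 0$, so every element of $\Omega(\f)$ is a constant vector of $\R^m$.

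The crucial step is to identify $\Omega(\f)=\{{\mathcal M}_{\f}\xi\}$. Fix $\eta\in\Omega(\f)$. Since $\widetilde\eta(s):=\T(s)\eta$ lies in $\Omega(\f)$ and is therefore constant in $x$, substituting into $D_t\uu=\A\uu$ gives $\widetilde\eta'(s)=C(x)\widetilde\eta(s)$ for every $x\in\Rd$; hence $\widetilde\eta(s)$ lies in the subspace $V:=\{v\in\R^m:C(\cdot)v\textrm{ is constant on }\Rd\}$ and $\widetilde\eta(s)=e^{s\widetilde C}\eta$, where $\widetilde C$ is the (well-defined) restriction of $C(x_0)$ to $V$ for any $x_0\in\Rd$. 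Proposition \ref{laurea} gives $\mathrm{Ker}(\widetilde C)=V\cap\bigcap_{x\in\Rd}\mathrm{Ker}(C(x))=\mathrm{span}(\xi)$, and by Lemma \ref{carusorompi} the spectrum of $\widetilde C$ lies in the closed left half-plane with $0$ the only purely imaginary eigenvalue. The uniform bound $|\widetilde\eta(s)|\le\|\f\|_\infty$ then forces the generalized $0$-eigenspace component of $\eta$ to lie in $\mathrm{Ker}(\widetilde C)$, so $\widetilde\eta(s)\to c\xi$ as $s\to+\infty$ for some $c\in\R$. Passing to the limit in the invariance identity $\sum_i\int(\T(t)\f)_i\,d\mu_i={\mathcal M}_{\f}$ along $t_{n_k}$, using $\mu_i=\xi_i\mu$, $|\xi|=1$, and dominated convergence on the finite measures, yields $\langle\eta,\xi\rangle={\mathcal M}_{\f}$ and, applied again at $c\xi$, gives $c={\mathcal M}_{\f}$. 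Writing $\eta={\mathcal M}_{\f}\xi+\delta$ with $\delta$ in the stable subspace of $\widetilde C$, the backward-flow argument — applying Proposition \ref{prop-rossana} to the bounded sequence $\T(t_{n_k}-s)\f$ (possibly iterated so as to handle the restriction $t>t_0$ in Proposition \ref{prop-rossana}) produces $\eta^{-s}\in\Omega(\f)$ with $\T(s)\eta^{-s}=\eta$ — implies $\eta^{-s}={\mathcal M}_{\f}\xi+e^{-s\widetilde C}\delta$, and since $e^{-s\widetilde C}\delta$ grows exponentially in $s$ unless $\delta=0$, the uniform bound $|\eta^{-s}|\le\|\f\|_\infty$ forces $\delta=0$ and hence $\eta={\mathcal M}_{\f}\xi$.

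To extend to $\f\in B_b(\Rd;\R^m)$, the strong Feller property and the identity $\T(t)\f=\T(t-t_0)\T(t_0)\f$ reduce matters to $\f\in C_b(\Rd;\R^m)$; one then approximates such $\f$ by $\f_n\in C_c^{\infty}(\Rd;\R^m)$ with $\|\f_n\|_\infty\le\|\f\|_\infty$ and $\f_n\to\f$ in $L^2_{\bm\mu}(\Rd;\R^m)$. The contractivity \eqref{festa} gives $\|\T(t)(\f-\f_n)\|_{L^2_{\bm\mu}}\le c\|\f-\f_n\|_{L^2_{\bm\mu}}$ uniformly in $t$; combined with the $L^2_{\bm\mu}$-convergence $\T(t)\f_n\to {\mathcal M}_{\f_n}\xi$ (coming from the locally uniform convergence of the previous step and dominated convergence on the finite $\mu_i$) and ${\mathcal M}_{\f_n}\to{\mathcal M}_{\f}$, we obtain $\T(t)\f\to {\mathcal M}_{\f}\xi$ in $L^2_{\bm\mu}$. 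Invoking Proposition \ref{prop-rossana} once more, any locally uniform subsequential limit of $\T(t_n)\f$ is continuous and coincides with ${\mathcal M}_{\f}\xi$ in $L^2_{\bm\mu}$, hence everywhere, so the full family converges locally uniformly. Finally, the $L^p_{\bm\mu}$-convergence for $\f\in L^p_{\bm\mu}(\Rd;\R^m)$ follows by density (Remark \ref{rm_dens}), the contractivity \eqref{festa}, and dominated convergence applied to the bounded case. The main obstacle is the identification $\Omega(\f)=\{{\mathcal M}_{\f}\xi\}$: compactness and $\T$-invariance of $\Omega(\f)$ combined with invariance of $\bm{\mu}$ only guarantee $\langle\eta,\xi\rangle={\mathcal M}_{\f}$, leaving a whole affine subspace of constants a priori possible; the full spectral structure of $C(x_0)$ from Lemma \ref{carusorompi} together with the backward-flow/boundedness argument is what rules out all elements but ${\mathcal M}_{\f}\xi$.
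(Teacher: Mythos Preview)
Your proof is correct and shares the paper's overall architecture, but the key identification step is handled differently. The paper analyses only the \emph{forward} flow $s\mapsto\T(s)\g=e^{sC(0)}\g$: boundedness kills the nilpotent part of the $0$-Jordan block so that $\T(s)\g$ converges, and the limit is then identified as $\mathcal{M}_{\g}\xi=\mathcal{M}_{\f}\xi$ by appealing to the Ces\`aro convergence $\boldsymbol{\mathcal P}_t\g\to\mathcal{M}_{\g}\xi$ established in the proof of Theorem~\ref{exi_inv_meas}; the conclusion for $\T(t)\f$ then follows from the splitting $\T(t)\f=\T(t-t_{n_k})\T(t_{n_k})\f$ and Proposition~\ref{prop-rossana}. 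You instead run time \emph{backwards}: extracting $\eta^{-s}\in\Omega(\f)$ with $e^{sC(x_0)}\eta^{-s}=\eta$ and using $|\eta^{-s}|\le\|\f\|_\infty$ kills the stable component of $\eta$ directly, so that $\eta\in\mathrm{Ker}(C(x_0))\cap V=\mathrm{span}(\xi)$ without any recourse to the Ces\`aro result. One technical caveat: $V$ need not be $C(x_0)$-invariant, so $\widetilde C$ as an endomorphism of $V$ may be ill-defined; the clean fix is to work with $C(x_0)$ on all of $\R^m$ (the trajectory $\T(r)\eta^{-s}$ stays in $\Omega(\f)$ and hence satisfies the ODE $y'=C(x_0)y$) and invoke $\eta\in V$ only at the final step to pass from $\mathrm{Ker}(C(x_0))$ to $\mathrm{span}(\xi)$, which is effectively what your argument does. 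For the extension from $C^{3+\alpha}_c$ to $C_b$, the paper approximates locally uniformly and applies Proposition~\ref{prop-rossana} to control $\sup_{t\ge 0}\|\T(t)(\f-\f_n)\|_{C(\overline B_R)}$ directly, whereas you detour through $L^2_{\bm\mu}$ via \eqref{festa} and then upgrade via compactness; both routes are valid, the paper's being slightly shorter here.
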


\begin{proof}
The last statement is a straightforward consequence of the first one.
Indeed, if $\f\in C_b(\Rd;\Rm)$, then the first statement and the dominated convergence theorem immediately show that
$\T(t)\f$ converges to ${\mathcal M}_{\f}\xi$ in $L^p_{\bm\mu}(\Rd;\R^m)$ as $t\to +\infty$.
In the general case when $\f\in L^p_{\bm\mu}(\Rd;\Rm)$, we fix a sequence $(\f_n)\subset C_b(\Rd;\Rm)$ converging to $\f$ in $L^p_{\bm\mu}(\Rd;\Rm)$ as $n\to +\infty$.
Taking \eqref{festa} into account, we can estimate
\begin{align*}
\|\T(t)\f-{\mathcal M}_{\f}\xi\|_{L^p_{\bm\mu}(\Rd;\R^m)}\le &2^{\frac{p-1}{p}}\|\f-\f_n\|_{L^p_{\bm\mu}(\Rd;\R^m)}+ \|\T(t)\f_n-{\mathcal M}_{\f_n}\xi\|_{L^p_{\bm\mu}(\Rd;\R^m)}\\
&+ |{\mathcal M}_{\f_n}-{\mathcal M}_{\f}|\bigg (\sum_{i=1}^m\xi_i^p\mu_i(\Rd)\bigg )^{\frac{1}{p}}.
\end{align*}
Letting first $t$ and then $n$ tend to $+\infty$, we easily conclude that
$\T(t)\f$ converges to ${\mathcal M}_{\f}$ in $L^p_{\bm\mu}(\Rd;\R^m)$ also in this case.

In view of the strong Feller property of the semigroup $(\T(t))$ (see Section \ref{sect:2}), it suffices to prove the first statement for functions $\f\in C_b(\Rd;\R^m)$.
Actually, we can limit ourselves to considering functions $\f\in C^{3+\alpha}_c(\Rd;\R^m)$. Indeed, each $\f\in C_b(\Rd;\R^m)$ is the local uniform limit in $\Rd$
of a sequence $(\f_n)\subset C^{3+\alpha}_c(\Rd;\R^m)$. By Proposition \ref{prop-rossana}, up to a subsequence $\T(\cdot)\f_n$ converges to $\T(\cdot)\f$ uniformly
in $[0,+\infty)\times\overline B_R$ for any $R>0$. Hence, we can estimate
\begin{align*}
\|\T(t)\f-{\mathcal M}_{\f}\xi\|_{C_b(\overline B_R;\R^m)}\le &\sup_{t\ge 0}\|\T(t)\f-\T(t)\f_n\|_{C_b(\overline B_R;\R^m)}\\
&+\|\T(t)\f_n-{\mathcal M}_{\f_n}\xi\|_{C_b(\overline B_R;\R^m)}+|{\mathcal M}_{\f_n}-{\mathcal M}_{\f}|
\end{align*}
for any $t>0$ and $R>0$ (take Remark\ \ref{rmk-102} into account). Hence, if $\T(t)\f_n$ converges to ${\mathcal M}_{\f_n}\xi$ locally uniformly in $\Rd$ for any $n\in\N$ as $t\to +\infty$,
then letting $t$ and $n$ tend to $+\infty$ in the previous estimate we conclude that
$\T(t)\f$ converges to ${\mathcal M}_{\f}\xi$ as $t\to +\infty$, locally uniformly in $\Rd$.

Fix a function $\f\in C^{3+\alpha}_c(\Rd;\R^m)$ and a sequence $(t_n)$ diverging to $+\infty$ such that $t_n>1$ for any $n\in\N$.
Since the sequence $(\T(t_n-1)\f)$ is bounded, by Proposition \ref{prop-rossana} it follows that there exists a subsequence
$(t_{n_k})$ such that $\T(t_{n_k})\f=\T(1)\T(t_{n_k}-1)\f$ converges to some function $\g\in C_b(\Rd;\R^m)$, locally uniformly on $\Rd$.
Clearly, $(\T(t_{n_k})\f)_j$ converges to $g_j$ also in $L^2_{\mu}(\Rd)$ for any $j=1,\ldots,m$. We claim that $\g$ is constant.
Indeed, from Proposition \ref{metropolitana}, it follows that $\||J_x\T(t_{n_k})\f|\|_{L^2_{\mu}(\Rd)}$ vanishes as $k\to +\infty$. Consequently,
$|J_x\g|\equiv 0$ and $\g$ is constant. Since for any $t>0$ the
function $\T(t)\g$ is the $L^2_{\bm\mu}$-limit of the sequence $(\T(t+t_{n_k})\f)$ as $k \to +\infty$,
taking again Proposition \ref{metropolitana} into account and arguing as above, we deduce that the function $\h=\T(\cdot)\g$ is independent of $x$ and
$D_t \h(t)=C(x)\h(t)$ for any $t>0$ and $x\in\Rd$. Hence, we can fix $x=0$.

Denote by $\lambda_1=0$, $\lambda_2,\ldots,\lambda_s$ ($s\le m$) the eigenvalues of the matrix $C(0)$ and by $a_i$ and $q_i$ ($i=1,\ldots,s$), respectively, their
algebraic and geometric multiplicities.
By the Jordan normal form theorem, we can write $C(0)= PJP^{-1}$ for some invertible matrix $P$ (with entries $p_{ij}$)
and some block matrix $J= \textrm{diag}(J_1,\ldots, J_s)$. Each matrix $J_i$ has dimension $a_i$ and itself splits into
$q_i$ sub-blocks $J_{ij}=\lambda_iI+N_{ij}$ for some matrix $N_{ij}$ such that $(N_{ij})_{hk}=\delta_{h+1,k}$ for each $h$ and $k$. In particular, if $n_{ij}$ denotes the dimension of the matrix $N_{ij}$
then $N_{ij}^{n_{ij}}$ is the trivial matrix.
Based on this remark, we can infer that $e^{tJ_{ij}}=e^{t\lambda_{i}}e^{tN_{ij}}$, where $e^{tN_{ij}}$ is an upper triangular matrix with
\begin{eqnarray*}
(e^{tN_{ij}})_{hk}=\frac{t^{k-h}}{(k-h)!}, \qquad\;\, k \ge h.
\end{eqnarray*}
Taking Lemma \ref{carusorompi} into account, which shows that ${\rm Re}\lambda_h<0$ for any $h=2,\ldots,s$,
it thus follows that the norm of the matrix $e^{tJ_h}$ exponentially decreases to zero as $t\to +\infty$ for any $h=2,\ldots,s$.
As a byproduct of all the above remarks, if we set $P^{-1}\g=\eta$, then for any $i=1,\ldots,m$ we can write
\begin{align*}
(\T(t)\g)_{i}= &(P e^{tJ}P^{-1}g)_{i}
=\sum_{j,h=1}^{m} p_{{i}j}(e^{tJ})_{jh}\eta_h=\sum_{j,h=1}^{a_1}p_{ij}(e^{tJ_1})_{jh}\eta_h+o(1,+\infty)
\end{align*}
for any $t>0$ where, following Landau's formalism, we have denoted by $o(1,+\infty)$ a function which vanishes as $t$ tends to $+\infty$.

Let us rewrite the sum in the last side of the previous formula, in a much more convenient way. For this purpose, we set $N_0=0$, $N_k=\sum_{j=1}^kn_{1j}$ for $k=1,\ldots,a_1$ and observe that
\begin{align*}
\sum_{j,h=1}^{a_1}p_{ij}(e^{tJ_1})_{jh}\eta_h=&\sum_{k=1}^{q_1}\sum_{j,h=N_{k-1}}^{N_k}p_{ij}(e^{tJ_{1k}})_{jh}\eta_h
=\sum_{k=1}^{q_1}\sum_{j,h=N_{k-1}+1}^{N_k}p_{ij}(e^{tN_{1k}})_{jh}\eta_h\\
=&\sum_{k=1}^{q_1}\sum_{j=N_{k-1}+1}^{N_k}\sum_{h=j}^{N_k}p_{ij}\frac{t^{h-j}}{(h-j)!}\eta_h\\
=&\sum_{j=1}^{a_1}p_{ij}\eta_j
+\sum_{k=1}^{q_1}\sum_{j=N_{k-1}+1}^{N_k}\sum_{h=j+1}^{N_k}p_{ij}\frac{t^{h-j}}{(h-j)!}\eta_h.
\end{align*}
The last term in the previous chain of inequalities is a polynomial vanishing at zero, which we denote by $q$.
Summing up, we have shown that
\begin{align}
(\T(t)\g)_i
=\sum_{j=1}^{a_1}p_{ij}\eta_j+q(t)+o(1,+\infty),\qquad\;\,t>0.
\label{form-0}
\end{align}
Since, again by Proposition \ref{prop-rossana}, up to a subsequence $\T(t_{n_k})\g$ converges locally uniformly on $\Rd$ as $k \to +\infty$ , from \eqref{form-0} we deduce that
$q$ is the null polynomial, so that
\begin{align*}
(\T(t)\g)_i= \sum_{j=1}^{a_1}p_{ij}\eta_j+o(1,+\infty)
\end{align*}
for $t>0$. The above formula shows that $\T(t)\g$ converges as $t\to +\infty$.
By the proof of Theorem \ref{exi_inv_meas}, we know that the average of $\T(\cdot)\g$ over the interval $[0,t]$ converges as $t\to +\infty$ to ${\mathcal M}_{\g}\xi$. Since the function
$\T(\cdot)\g$ is bounded and converges at infinity, we conclude that $\T(t)\g$ converges to ${\mathcal M}_{\g}\xi$ as $t\to +\infty$.
Actually ${\mathcal M}_{\g}={\mathcal M}_{\f}$. Indeed, by invariance property of the measures $\mu_i$ we can write
\begin{eqnarray*}
\sum_{j=1}^m\int_{\Rd}(\T(t_{n_k})\f)_jd\mu_j=\sum_{j=1}^m\int_{\Rd}f_jd\mu_j
\end{eqnarray*}
and letting $k$ tend to $+\infty$ by dominated convergence, we obtain
\begin{eqnarray*}
\sum_{j=1}^m\int_{\Rd}g_jd\mu_j=\sum_{j=1}^m\int_{\Rd}f_jd\mu_j
\end{eqnarray*}
i.e., ${\mathcal M}_{\g}={\mathcal M}_{\f}$.

Now, we can prove that $\T(\cdot)\f$ converges to ${\mathcal M}_{\f}\xi$ locally uniformly in $\Rd$ as $t\to +\infty$.
For this purpose, we fix $R>0$ and estimate
\begin{align*}
&\|\T(t)\f-{\mathcal M}_{\f}\xi\|_{C(\overline B_R)}\\
\le &\|\T(t-t_{n_k})(\T(t_{n_k})\f-\g)\|_{C(\overline B_R)}
+\|\T(t-t_{n_k})\g-{\mathcal M}_{\f}\xi\|_{C(\overline B_R)}\\
\le &\sup_{s>0}\|\T(s)(\T(t_{n_k})\f-\g)\|_{C(\overline B_R)}
+\|\T(t-t_{n_k})\g-{\mathcal M}_{\f}\xi\|_{C(\overline B_R)}
\end{align*}
for any $t$ and $k\in\N$ such that $t-t_{n_k}>0$. Fix $k\in\N$. Letting $t\to +\infty$ in the above estimate gives
\begin{eqnarray*}
\limsup_{t\to +\infty}\|\T(t)\f-{\mathcal M}_{\f}\xi\|_{C(\overline B_R)}
\le \sup_{s>0}\|\T(s)(\T(t_{n_k})\f-\g)\|_{C(\overline B_R)}
\end{eqnarray*}
for any $k\in\N$. Finally, using Proposition \ref{prop-rossana}, we can let $k$ tend to $+\infty$ and conclude that
$\limsup_{t\to +\infty}\|\T(t)\f-{\mathcal M}_{\f}\xi\|_{C(\overline B_R)}=0$, and we are done.
\end{proof}

\begin{example}{\rm
Let $\A$ be as in Example \ref{example-1}. It is easy to show that, for any $\sigma>0$, the function $\varphi_{\sigma}:\Rd\to\R$ ($\sigma>0$), defined by $\varphi_{\sigma}(x)=(1+|x|^2)^{\sigma}$, for any $x\in \Rd$, satisfies Hypothesis \ref{hyp-base}(iv) too. Hence, if $\sigma>(\gamma-1)^+$ then also
Hypothesis \ref{hyp-nonsichiama} is satisfied and Theorem \ref{asy_thm} can be applied.}
\end{example}

\appendix
\section{Regularity of solutions to parabolic problems}

\begin{thm}
\label{teo-app}
Let $\Omega$ be a domain of $\Rd$ and let $\A={\rm Tr}(QD^2)+\langle {\bf b},\nabla\rangle+C$ with the entries of the matrix-valued functions $C$, $Q$ and those of the vector-valued function ${\bf b}$ in $C^{k+\alpha}_{\rm loc}(\Omega)$ for some $k\in\N$ and $\alpha\in (0,1)$.
Let $\uu\in C^{1+\alpha/2,2+\alpha}_{\rm loc}([0,T]\times\Omega;\Rm)$ be such that $\uu(0,\cdot)\in C^{2+k+\alpha}_{\rm loc}(\Rd;\Rm)$, $D_t\uu-\A\uu\in C^{\alpha/2,k+\alpha}_{\rm loc}([0,T]\times\Omega;\Rm)$. Then,
$\uu\in C^{1+\alpha/2,2+k+\alpha}_{\rm loc}([0,T]\times\Omega;\Rm)$, $D_t\uu\in C^{\alpha,k+\alpha}_{\rm loc}([0,T]\times\Omega;\Rm)$ and
$D^{\beta}_xD_t\uu=D_tD^{\beta}_x\uu$ in $(0,T)\times\Omega$ for any $|\beta|\le k$.
\end{thm}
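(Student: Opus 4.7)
The argument is by induction on $k$. The base case $k = 0$ is immediate: the regularity $\uu \in C^{1+\alpha/2, 2+\alpha}_{\rm loc}$ and the (trivial) identity for $|\beta| = 0$ are part of the hypotheses, and $D_t\uu \in C^{\alpha/2, \alpha}_{\rm loc}$ follows directly from $\uu \in C^{1+\alpha/2, 2+\alpha}_{\rm loc}$.

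For $k \ge 1$, assume the statement at level $k-1$. Since the hypotheses at level $k$ are strictly stronger than those at level $k-1$, the inductive hypothesis produces $\uu \in C^{1+\alpha/2, 2+(k-1)+\alpha}_{\rm loc}$ together with the commutation $D_x^{\beta} D_t \uu = D_t D_x^{\beta} \uu$ for $|\beta| \le k-1$. To gain one extra spatial derivative, I would use the classical difference-quotient method. Fix $i \in \{1, \ldots, d\}$, set $\tau_i^h \varphi(\cdot) := h^{-1}(\varphi(\cdot + h e_i) - \varphi(\cdot))$ for $h \ne 0$ small, and consider $\uu^h := \tau_i^h \uu$, defined on $[0,T] \times \Omega_h$ with $\Omega_h := \{x \in \Omega : x + he_i \in \Omega\}$. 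Using the linearity of $\A$ with $x$-dependent coefficients, a direct computation yields
\begin{equation*}
D_t \uu^h - \A \uu^h = \g^h \qquad \text{on } (0,T] \times \Omega_h,
\end{equation*}
where $\g := D_t \uu - \A \uu$ and
\begin{equation*}
\g^h := \tau_i^h \g + \mathrm{Tr}((\tau_i^h Q) D^2 \uu(\cdot, \cdot + h e_i)) + \langle \tau_i^h {\bf b}, \nabla_x \uu(\cdot, \cdot + h e_i) \rangle + (\tau_i^h C) \uu(\cdot, \cdot + h e_i).
\end{equation*}
Using the $C^{k+\alpha}_{\rm loc}$-regularity of the coefficients and the inductive regularity of $\uu$, one checks that, for every compact $K \subset [0,T] \times \Omega$, the family $(\g^h)$ is bounded in $C^{\alpha/2, (k-1)+\alpha}(K)$ uniformly in $h$ small, while $\uu^h(0,\cdot)$ is bounded in $C^{2+(k-1)+\alpha}$ on compact subsets of $\Omega$, thanks to $\uu(0,\cdot) \in C^{2+k+\alpha}_{\rm loc}$.

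Applying the \emph{quantitative} form of the inductive hypothesis (i.e., the $k$-th order interior parabolic Schauder estimate extending Theorem \ref{thm-A2}) to the equation solved by $\uu^h$, one obtains a uniform-in-$h$ bound on $\uu^h$ in $C^{1+\alpha/2, 2+(k-1)+\alpha}_{\rm loc}$. Letting $h \to 0$ via Arzel\`a--Ascoli and uniqueness of the pointwise limit, we conclude that $D_i \uu \in C^{1+\alpha/2, 2+(k-1)+\alpha}_{\rm loc}$, so by arbitrariness of $i$ we get $\uu \in C^{1+\alpha/2, 2+k+\alpha}_{\rm loc}$. Then $D_t \uu \in C^{\alpha/2, k+\alpha}_{\rm loc}$ follows directly from $D_t \uu = \A \uu + \g$, and the commutation $D_x^\beta D_t \uu = D_t D_x^\beta \uu$ for $|\beta| \le k$ is a standard consequence of Schwarz's theorem, now applicable thanks to the upgraded regularity.

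The main obstacle is that this passage to the limit is only rigorous once one has higher-order interior Schauder estimates providing uniform-in-$h$ control of $\uu^h$; since Theorem \ref{thm-A2} gives only the base-order version, the inductive step must be phrased jointly with the quantitative estimate, and both should be upgraded together via the difference-quotient argument. Getting regularity up to $t = 0$ additionally requires parabolic Schauder estimates up to the initial slice, which are available because $\uu(0, \cdot) \in C^{2+k+\alpha}_{\rm loc}$. This is standard material (see, e.g., Ladyzhenskaya--Solonnikov--Ural'tseva), but the careful bookkeeping of the H\"older norms of the commutator terms and the uniform control of the Cauchy data for $\uu^h$ constitute the bulk of the work.
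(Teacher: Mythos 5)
Your proposal follows the same difference-quotient strategy as the paper, and you correctly diagnose its weak point: to pass to the limit in $h$ you need uniform-in-$h$ control on $\uu^h$, and the interior Schauder estimate this would require is one order higher than what Theorem~\ref{thm-A2} supplies. The paper closes precisely this gap by a different device. It multiplies $\uu$ by a cutoff $\vartheta$ with $\chi_{\Omega_1}\le\vartheta\le\chi_{\Omega_2}$ for $\Omega_1\Subset\Omega_2\Subset\Omega$ and extends $\vv:=\vartheta\uu$ by zero to all of $[0,T]\times\Rd$; simultaneously it replaces $\A$ by a globally well-behaved operator $\hat\A={\rm Tr}(\hat Q D^2)+\langle\hat{\bf b},\nabla\rangle+\hat C$ with $\hat Q=\eta Q+(1-\eta)I$, $\hat{\bf b}=\eta{\bf b}$, $\hat C=\eta C$, where $\eta$ is a second cutoff with $\eta\equiv 1$ on $\Omega_2$. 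The difference quotient $\vv_{h,j}=\Delta_{h,j}\vv$ then solves a Cauchy problem on all of $\Rd$ driven by $\hat\A$; since $\hat\A$ has bounded, uniformly H\"older continuous coefficients and is a bounded perturbation of a diagonal operator generating an analytic semigroup in $C_b(\Rd;\Rm)$, it enjoys maximal $C^\alpha$-regularity, which yields $\|\vv_{h,j}\|_{C^{1+\alpha/2,2+\alpha}_b}\le c\,(\|\vv_{h,j}(0,\cdot)\|_{C^{2+\alpha}_b}+\|\g_{h,j}\|_{C^{\alpha/2,\alpha}_b})$ uniformly in $h$, at base order only. The induction is then organized as ``$k=1$ plus iteration'': for larger $k$ one differentiates the equation $k$ times in space (licit by the inductive regularity) and applies the $k=1$ result to each $\ww=D^\beta_x\uu$, whose inhomogeneity $\g_\beta$ is in $C^{\alpha/2,1+\alpha}_{\rm loc}$. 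So the overall scheme you outline matches the paper's, but the step you yourself flag as ``the main obstacle'' is exactly where the paper substitutes the localize-extend-and-maximal-regularity argument for the higher-order interior Schauder estimate your version would need; as written, your argument has that acknowledged gap, whereas the paper's does not require it.
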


\begin{proof}
We begin by the case $k=1$, which is the core of the proof. We fix two connected open sets $\Omega_1$ and $\Omega_2$ with $\Omega_1\Subset\Omega_2\Subset\Omega$, and a function $\vartheta\in C^\infty(\Rd)$
such that $\chi_{\Omega_1}\le\vartheta\le\chi_{\Omega_2}$. Next, we set $\g=D_t\uu-\A\uu$, denote by $\vv$ the trivial extension of the function $\vartheta \uu$ to the whole $[0,T]\times\Rd$ and, for any $j=1,\ldots,d$ and $h\in\R\setminus\{0\}$, introduce the operator $\Delta_{h,j}$ defined on smooth functions $\psi$ by $\Delta_{h,j}\psi=h^{-1}(\psi(\cdot+he_j)-\psi)$,
which is bounded from $C^{1+\alpha}_b(\Rd)$ into $C^{\alpha}_b(\Rd)$. The function
$\vv_{h,j}=\Delta_{h,j}\vv:=(\Delta_{h,j}v_1,\ldots,\Delta_{h,j}v_m)$ belongs to $C^{1+\alpha/2,2+\alpha}_{\rm loc}([0,T)\times\Rd;\Rm)$ and
$D_t\vv_{h,j}=\hat\A\vv_{h,j}+\g_{h,j}$ in $(0,T)\times\Rd$,
where $\hat\A$ is defined as the operator $\A$, with $Q$, ${\bf b}$ and $C$ being replaced by
$\hat Q=\eta Q+(1-\eta)I$, $\hat b=\eta b$, $\hat C=\eta C$ for some function $\eta\in C^{\infty}_c(\Omega)$ such that $\eta\equiv 1$ in $\Omega_2$,
and $\g_{h,j}=(\g_{h,j}^{(1)},\ldots,\g_{h,j}^{(m)})$, where
\begin{align*}
\g_{h,j}^{(k)}=&\Delta_{h,j}[\vartheta g_k-u_k{\mathcal A}_0\vartheta-2\langle\hat Q\nabla\eta,\nabla_xu_k\rangle]+
{\rm Tr}((\Delta_{h,j}\hat Q)D^2_xv_k(\cdot,\cdot+he_j))\\
&+\langle \Delta_{h,j}\hat {\bf b},\nabla v_k(\cdot,\cdot+he_j)\rangle
+((\Delta_{h,j}\hat C)\vv(\cdot,\cdot+he_j))_k,
\end{align*}
for any $k=1,\ldots,m$, with ${\mathcal A}_0={\rm Tr}(\hat QD^2)+\langle\hat{\bf b},\nabla\rangle$.
Being a bounded perturbation of a diagonal operator, which generates an analytic semigroup in $C_b(\Rd;\Rm)$,
the operator $\hat\A$ itself is the generator of an analytic semigroup, which has maximal $C^{\alpha}$-regularity.
Since, due to our assumptions, $\vv_{h,j}(0,\cdot)$ and $\g_{h,j}$ belong to $C^{2+\alpha}_b(\Rd;\Rm)$ and
$C^{\alpha/2,\alpha}_b([0,T]\times\Rd;\Rm)$, respectively, this means that we can estimate
\begin{align*}
\|\vv_{h,j}\|_{C^{1+\alpha/2,2+\alpha}_b([0,T]\times\Rd;\Rm)}\le & c_1(\|\vv_{h,j}(0,\cdot)\|_{C^{2+\alpha}_b(\Rd;\Rm)}\!+\!\|\g_{h,j}\|_{C^{\alpha/2,\alpha}_b([0,T]\times\Rd;\Rm)})\\
\le & c_2(\|\uu(0,\cdot)\|_{C^{3+\alpha}_b(\Omega_3;\Rm)}+\|\g\|_{C^{\alpha/2,1+\alpha}_b([0,T]\times\Omega_3;\Rm)}\\
&\phantom{c_2(\,}+\|\uu\|_{C^{\alpha/2,2+\alpha}_b([0,T]\times\Omega_3;\Rm)})
\end{align*}
for any $|h|<h_0:={\rm dist}(\Omega,\partial\Omega_2)$ and some positive constants $c_1$ and $c_2$, independent of the functions involved, where
$\Omega_3=\Omega_2+B_{h_0}$.
Taking into account that $\vv_{h,j}$ converges to $D_j\vv$ pointwise on $[0,T]\times\Rd$ as $h\to 0$, a compactness argument shows that $D_j\vv$ belongs to $C^{1+\alpha/2,2+\alpha}_b([0,T]\times\Rd;\Rm)$. As a byproduct, $D_j\uu\in C^{1+\alpha/2,2+\alpha}_{\rm loc}([0,T]\times\Omega;\Rm)$. We also deduce that $D_t\uu$ is continuously differentiable in $[0,T]\times\Omega$ with respect to the variable $x_j$. This is enough to infer that
$D_tD_j\uu=D_jD_t\uu$ in $[0,T]\times\Omega$.

Now, suppose that the claim holds for some $k>1$ and set $\g=D_t\uu-\A\uu$. Differentiating the equation $D_t\uu=\A\uu+\g$ $k$-times with respect to the spatial variables, we conclude that, for any $\beta$, with length $k$, the function $\ww=D^{\beta}_x\uu$ solves the differential equation $D_t\ww=\A\ww+\g_{\beta}$, where $\g_{\beta}$ is a linear combination of the spatial derivatives of
$\uu$ up to the order $k+1$ with coefficients which are the derivatives of the coefficients up to the order $k$ of operator $\A$. As a consequence, $\g_{\beta}$ belongs to $C^{\alpha/2,1+\alpha}_{\rm loc}([0,T]\times\Omega;\Rm)$ and from the first part of the proof we conclude that $D^{\beta}_x\uu\in C^{1+\alpha/2,2+\alpha}_{\rm loc}([0,T]\times\Omega;\Rm)$. The arbitrariness of $\beta$ implies that $\uu\in C^{1+\alpha/2,3+k+\alpha}_{\rm loc}([0,T]\times\Omega;\Rm)$.
\end{proof}

\end{document}